\documentclass[10pt,reqno]{amsart}
\usepackage[british]{babel}
\usepackage{amsmath,amsfonts,amssymb,amstext,amsbsy,amsopn,amsthm,amsxtra,amscd} 
\usepackage{graphicx}
\usepackage{pdfpages}
\usepackage{caption}
\usepackage{subcaption}
\usepackage{color}
\usepackage{hyperref}
\usepackage{enumerate}
\usepackage{rotating}

 \newcommand{\acknowledgment}[1]{\par\addvspace\baselineskip\noindent\textbf{Acknowledgment:}\enspace\ignorespaces#1}

\numberwithin{equation}{section}

\newtheorem{theorem}{Theorem}[section]
\newtheorem{proposition}[theorem]{Proposition}
\newtheorem{corollary}[theorem]{Corollary}
\newtheorem{lemma}[theorem]{Lemma}
\newtheorem{definition}[theorem]{Definition}
\newtheorem{remark}[theorem]{Remark}
\newtheorem{claim}[theorem]{Claim}
\newtheorem{notation}[theorem]{Notation}

\newcommand{\bb}[1]{\mathbb{#1}}

\DeclareMathOperator{\conv}{conv}
\DeclareMathOperator{\diam}{diam}
\DeclareMathOperator{\dist}{dist}

\newcommand{\sob}[1]{\hspace{0.5pt}\raisebox{-1.8pt}{$#1$}}

\newcommand{\lle}{\mathchoice
	{\rotatebox[origin=c]{-180}{$\ell$}}
	{\rotatebox[origin=c]{-180}{$\ell$}}
	{\rotatebox[origin=c]{-180}{\scalebox{.7}{$\ell$}}}
	{\rotatebox[origin=c]{-180}{\scalebox{.5}{$\ell$}}}
}

\title{An Alphabetical Approach to Nivat's Conjecture}
\author{Cleber F. Colle}
\address{UNICAMP - University of Campinas, 13083-859, Campinas, SP, Brazil}
\email{clebercolle@outlook.com}
\author{Eduardo Garibaldi}
\address{UNICAMP - University of Campinas, 13083-859, Campinas, SP, Brazil}
\email{garibaldi@ime.unicamp.br}

\keywords{Combinatorics on words, Complexity function, Expansive subdynamics, Symbolic dynamics}

\begin{document}
\begin{abstract}
Since techniques used to address the Nivat's conjecture usually relies on Morse-Hedlund Theorem, an improved version of this classical result may mean a new step towards a proof for the conjecture. In this paper, consi- dering an alphabetical version of the Morse-Hedlund Theorem, we show that, for a configuration $\eta \in A^{\mathbb{Z}^2}$ that contains all letters of a given finite alphabet $A$, if its complexity with respect to a quasi-regular set $\mathcal{S} \subset \mathbb{Z}^2$ (a finite set whose convex hull on $\mathbb{R}^2$ is described by pairs of edges with identical size) is bounded from above by $\frac{1}{2}|\mathcal{S}|+|A|-1$, then $\eta$ is periodic.\\

\centering This version is based on the PhD thesis defended in 2017.
\end{abstract}
	
\maketitle 

\section{Introduction}
\label{sec1}

Fixed a finite alphabet $A$ (with at least two elements), for $n \in \bb{N}$, the $n$-comple- xity of an infinite sequence $\xi = (\xi_ {i})_ {i \in \bb{Z}} \in A^{\bb{Z}}$, denoted by $P_{\xi}(n)$, is defined to be the number of distinct words of the form $\xi_j\xi_{j+1} \cdots \xi_{j+n-1}$ appearing in $\xi$. In 1938, Morse and Hedlund \cite{hedlund} proved one of the most famous results in symbolic dynamics which establishes a connection between periodic sequences (sequences for which there is an integer $m \geq 1$ such that $\xi_{i+m} = \xi_{i}$ for all $i \in \bb{Z}$) and complexity. More specifically, they proved that $\xi \in A^{\bb{Z}}$ is periodic if, and only if, there exists $n \in$ $\bb{N}$ such that $P_{\xi}(n) \leq n$.

A natural extension of the complexity function to higher dimensions is obtained when we consider, instead of words, blocks of symbols. More precisely, the $n_1 \times \cdots \times n_d$-complexity of a configuration $\eta = (\eta_{g})_{g \in \bb{Z}^d} \in A^{\bb{Z}^d}$, denoted by $P_{\eta}(n_1, \ldots, n_d)$, is defined to be the number of distinct $n_1 \times \cdots \times n_d$ blocks of symbols appearing in $\eta$. Of course periodicity also has a natural higher dimensional generalization: $\eta \in A^{\bb{Z}^d}$ is periodic if there exists a vector $h \in (\bb{Z}^d)^*$, called period of $\eta$, such that $\eta_{g+h} = \eta_g$ for all $g \in \bb{Z}^d$. If $\eta \in A^{\bb{Z}^2}$ has two linearly independent periods, it is said to be doubly periodic. A configuration that is not periodic is said to be aperiodic.

The Nivat's Conjecture \cite{nivat} is the natural generalization of the Morse-Hedlund Theorem for the two-dimensional case.

\medbreak\noindent{{\bf Conjecture} (Nivat).}\hspace{1ex}\ignorespaces {\it For a configuration $\eta \in A^{\bb{Z}^2}$, if there exist integers $n,k \in$ $\bb{N}$ such that $P_{\eta}(n,k)$ $\leq nk$, then $\eta$ is periodic.}\medbreak

The first step towards the conjecture was given by Sander and Tijdeman \cite{tijdeman}: they showed that if $P_{\eta}(n,2) \leq 2n$ (or if $P_{\eta}(2,n) \leq 2n$) for some integer $n \in \bb{N}$, then $\eta \in A^{\bb{Z}^2}$ is periodic. Other weak forms of the Nivat's Conjecture were successively obtained in \cite{epifanio,quas,durand,van1,kari}. Moreover, Sander and Tijdeman \cite{sander} found counter-examples to the analogue of Nivat's Conjecture in higher dimensions, i.e., they showed that, for $d\geq 3$, there exist periodic configurations $\eta \in$ $\{0,1\}^{\bb{Z}^d}$ such that $P_{\eta}(n, \ldots,n) = 2n^{d-1}+1$.

The best result known so far was obtained by Bryna Kra and Van Cyr \cite{van}. Using the notion of expansive subspaces of $\bb{R}^2$ introduced by Boyle and Lind, they shed a new light towards a proof for Nivat's Conjecture by relating expansive subspaces to periodicity. In particular, they proved that if there exist integers $n,k \in \bb{N}$ such that $P_{\eta}(n,k) \leq \frac{1}{2}nk$, then $\eta \in A^{\bb{Z}^2}$ is periodic.

Our main result (Theorem \ref{main_result}) is an alphabetical improvement on Cyr and Kra's Theorem. Moreover, we consider the complexity function with respect to a more general class of sets, called quasi-regular sets  (see Definition \ref{q-regular}). In the particular case of blocks, we show that, for a configuration $\eta \in A^{\bb{Z}^2}$ that contains all letters of $A$, if there exist integers $n,k \in \bb{N}$ such that
\begin{equation}\label{eq_intro_mainresul}
P_{\eta}(n,k) \leq \frac{1}{2}nk+|A|-1 = \left(\frac{1}{2}+\frac{|A|-1}{nk}\right)nk,
\end{equation} 
where $|A|$ denotes the cardinality of the alphabet $A$, then $\eta$ is periodic. 

Here is an example of configuration that satisfies (\ref{eq_intro_mainresul}) but does not satisfy the condition of Cyr and Kra's Theorem. Let $A$ be the alphabet formed by the colours ``white" and ``black" and define $\eta \in A^{\bb{Z}^2}$ as $\eta_g := ``\textrm{black}"$ if $g = (a,a)+b(\sum_{i=6}^c i,0)$, where $a \in \bb{Z}$, $b \in \{-1,0,1\}$ and $c \geq 6$, and $\eta_g := ``\textrm{white}"$ otherwise (see Figure~$\ref{exemplo}$). Note that $P_{\eta}(n,k) = n+k$ when $n+k \leq 7$ and that from the symmetries of such configuration $$P_{\eta}(n,k) = n+k+\frac{1}{2}(n+k-7)(n+k-6)$$ when $n+k > 7$. It is easy to see that there are no integers $n,k \in \bb{N}$ such that $P_{\eta}(n,k) \leq \frac{1}{2}nk$. However, one has $P_{\eta}(3,4) = 7 = \frac{1}{2}\cdot 12+|A|-1$.
\begin{figure}[ht]
\centering
\def\svgwidth{11.5cm}
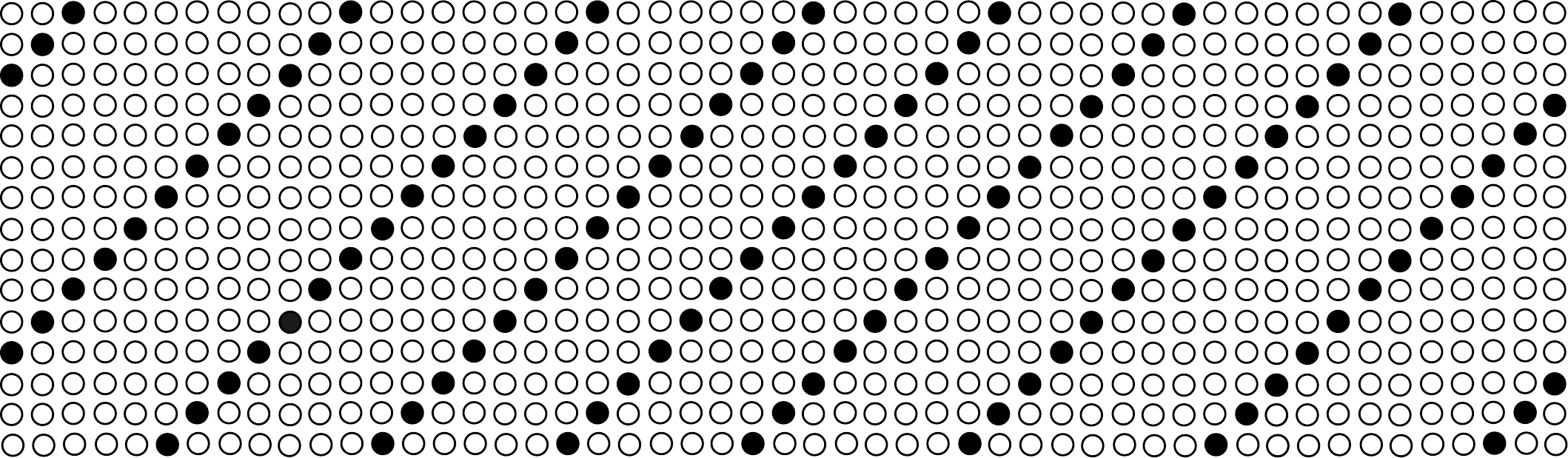
\caption{Representation of the configuration $\eta \in A^{\bb{Z}^2}$.}
\label{exemplo}
\end{figure}

\section{Initial concepts and main result}
\label{sec2}

Let $A$ be endowed with the discrete topology. It is well known that the configuration space $A^{\bb{Z}^d}$ equipped with the product topology is a metrizable compact space. For each $u \in \bb{Z}^d$, let $T^u : A^{\bb{Z}^d} \! \rightarrow A^{\bb{Z}^d}$ be the shift application, i.e., for $\eta = (\eta_{g})_{g \in \bb{Z}^d}$ $\in A^{\bb{Z}^d}$, the configuration $T^u\eta$ is defined by $(T^u\eta)_{g} := \eta_{g+u}$ for all $g \in \bb{Z}^d$. It is easy to see that, with respect to this topology, the $\bb{Z}^d$-action by shift applications $(T^u : u \in \bb{Z}^d)$ is continuous.  Let $Orb\,(\eta) := \{T^{u}\eta : u \in \bb{Z}^d\}$ denote the $\bb{Z}^d$-orbit of $\eta \in A^{\bb{Z}^d}$ and set $X_{\eta} := $ $\overline{Orb\,(\eta)}$, where the bar denotes the closure.

 Following Sander and Tijdeman \cite{sander}, for a nonempty set $\mathcal{S} \subset \bb{Z}^d$, the $\mathcal{S}$-complexity of $\eta \in A^{\bb{Z}^d}$, denoted by $P_{\eta}(\mathcal{S})$, is defined to be the number of distinct $\mathcal{S}$-con\-fig\-ur\-a\-tions of the form $(T^{u}\eta)|\sob{\mathcal{S}} \in A^{\mathcal{S}}$, where $u \in \bb{Z}^d$ and $\cdot |\sob{\mathcal{S}}$ means the restriction to the set $\mathcal{S}$. The set of all $\mathcal{S}$-configurations of $\eta \in A^{\bb{Z}^d}$ is denoted by $$L(\mathcal{S},\eta) := \left\{(T^{u}\eta)|\sob{\mathcal{S}} \in A^{\mathcal{S}} : u \in \bb{Z}^d\right\}.$$ Clearly $\mathcal{T} \subset \mathcal{S}$ implies $P_{\eta}(\mathcal{T}) \leq P_{\eta}(\mathcal{S})$. If $\mathcal{S} \subset \bb{Z}^d$, then $P_{\eta}(\mathcal{S}) = P_{T^u\eta}(\mathcal{S}+g)$ for all $u,g \in \bb{Z}^d$ and $P_{x}(\mathcal{S}) \leq P_{\eta}(\mathcal{S})$ for any $x \in X_{\eta}$. We remark that for a $n_1 \times \cdots \times n_d$ block based at the origin, i.e., $$R_{n_1, \ldots, n_d} := \left\{(t_1, \ldots, t_d) \in \bb{Z}^d : 0 \leq t_i < n_i \ \textrm{for every} \ 1 \leq i \leq d\right\},$$ the previous notion $P_{\eta}(n_1,\ldots,n_d)$ coincides with $P_{\eta}(R_{n_1,\ldots,n_d})$. 

A set $\mathcal{S} \subset \bb{Z}^2$ is called \emph{convex} if its convex hull in $\bb{R}^2$, denoted by $\conv(\mathcal{S})$, is closed and $\mathcal{S} =$ $\conv(\mathcal{S}) \cap \bb{Z}^2$. If $\mathcal{S} \subset \bb{Z}^2$ is a convex set, a point $g \in \mathcal{S}$ is a \emph{vertex} of $\mathcal{S}$ when $\mathcal{S} \backslash \{g\}$ is a convex subset, and a line segment $w$ contained at the boundary of $\conv(\mathcal{S})$ is an \emph{edge} of $\mathcal{S}$ if it is an edge of the convex polygon $\conv(\mathcal{S}) \subset \bb{R}^2$. Let $V(\mathcal{S})$ and $E(\mathcal{S})$ denote, respectively, the sets of vertices and edges of $\mathcal{S}$. 

Let $\mathcal{F}_{C}$ denote the family of convex, finite and nonempty subsets of $\bb{Z}^2$, and $\mathcal{F}^{V\!ol}_{C}$ de\-no\-te the subfamily of $\mathcal{F}_{C}$ whose convex hull has positive area.

If $\mathcal{S} \subset \bb{Z}^2$ is a convex set (possibly infinite) such that $\conv(\mathcal{S})$ has non-null area, our standard convention is that the boundary of $\conv(\mathcal{S})$ is positively oriented. With this convention, each edge $w \in E(\mathcal{S})$ 
inherits a natural orientation from the boundary of $\conv(\mathcal{S})$.

In the sequel, by an oriented object we mean an oriented line, an oriented line segment or a vector. Remember that two vectors are parallel if they have the same direction and antiparallel if they have opposite directions. Two oriented objects in $\bb{R}^2$ are said to be (anti)parallel if the adjacent vectors to their respective orientations are (anti)parallel.

\begin{definition}\label{q-regular}
We say that $\mathcal{S} \in \mathcal{F}^{V\!ol}_{C}$ is a quasi-regular set when, for every edge $w \in E(\mathcal{S})$, there is an edge $w' \in E(\mathcal{S})$ antiparallel to $w$ satisfying $|w' \cap \mathcal{S}| = |w \cap \mathcal{S}|$.
\end{definition}

We may now state our main result.

\begin{theorem}\label{main_result}
If  $\eta \in A^{\bb{Z}^2}$ contains all letters of the alphabet $A$ and there exists a quasi-regular set $\mathcal{S} \in$ $\mathcal{F}^{V\!ol}_{C}$ such that $P_{\eta}(\mathcal{S}) \leq \frac{1}{2}|\mathcal{S}|+|A|-1$, then $\eta$ is periodic.
\end{theorem}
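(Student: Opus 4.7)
The plan is to argue by contradiction. Suppose $\eta$ is aperiodic; then $X_\eta$ is infinite and, by the Boyle-Lind machinery already invoked in the proof of Cyr and Kra's theorem, admits at least one nonexpansive one-dimensional subspace $\ell \subset \bb{R}^2$. The strategy is to combine this with a Morse-Hedlund style \emph{layerwise growth lemma} along a chain of quasi-regular sets, exploiting the antiparallel-edge symmetry imposed by Definition \ref{q-regular}.

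I would first reduce to a minimal counterexample, taking $\mathcal{S}$ of smallest cardinality among quasi-regular sets satisfying $P_\eta(\mathcal{S}) \leq \frac{1}{2}|\mathcal{S}|+|A|-1$. The role of the alphabetical term is visible at the base of the growth: the hypothesis that $\eta$ contains every letter of $A$ says exactly $P_\eta(\{0\}) = |A|$, replacing the trivial bound $P_\eta(\{0\}) \geq 1$ used by Cyr and Kra, and providing the extra slack of $|A|-1$ that appears in the final inequality.

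The inductive step would run as follows. Fix an edge $w \in E(\mathcal{S})$; by quasi-regularity there is an antiparallel partner $w' \in E(\mathcal{S})$ with $|w' \cap \mathcal{S}| = |w \cap \mathcal{S}|$. I would then consider the operation of shaving the layer of $\mathcal{S}$ along $w'$ and appending a parallel layer along $w$, producing a chain $\mathcal{S}_0 \subsetneq \mathcal{S}_1 \subsetneq \cdots \subsetneq \mathcal{S}_N = \mathcal{S}$ of quasi-regular sets whose successive differences consist of one antiparallel pair of lattice layers of equal length. The key estimate to establish is that, in the aperiodic setting, each such double-layer addition increases $P_\eta$ by at least half the number of new lattice points, in analogy with the Cyr-Kra bound adapted to quasi-regular shapes. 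Telescoping these lower bounds along the chain, starting from a small base set where the $|A|$ contribution kicks in, yields $P_\eta(\mathcal{S}) > \frac{1}{2}|\mathcal{S}|+|A|-1$, contradicting the hypothesis.

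The main obstacle is the layer-growth estimate itself: one must show that for each admissible antiparallel pair of layers, either $P_\eta$ jumps by at least the required half-cardinality, or a genuine period of $\eta$ is forced. This is where the nonexpansive subspace $\ell$ plays its role: if $\ell$ points along some edge of $\mathcal{S}$, one can arrange the chain so that this edge is extended last, and the absence of growth there produces a period parallel to $\ell$. Otherwise, one must first deform the chain of quasi-regular sets so that an edge direction becomes aligned with $\ell$, while preserving the antiparallel equal-length condition at every stage. Handling this alignment, and reconciling two independent nonexpansive directions when they both arise, is expected to be the most delicate part of the argument.
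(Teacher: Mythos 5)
Your opening move (contradiction, aperiodicity, existence of a nonexpansive line via Boyle--Lind) matches the paper, but after that the proposal rests on a ``layer-growth estimate'' that is never proved and is in fact the entire content of the theorem. The claim that each addition of an antiparallel pair of layers either increases $P_{\eta}$ by at least half the number of new lattice points or ``forces a genuine period of $\eta$'' is not available: what low complexity increments actually give (via the Morse--Hedlund / generating-set arguments) is periodicity of \emph{some configurations of the orbit closure $X_{\eta}$ restricted to strips or half-planes}, not a period of $\eta$ itself. Promoting strip-periodicity of auxiliary configurations to global periodicity of $\eta$ is precisely where the paper spends most of its effort: it builds balanced sets (Proposition \ref{pps_lp-balanceado}), extends periodicity from strips to half-planes and then to the plane (Propositions \ref{pps_period_semiplano} and \ref{pps_period_global}), constructs an aperiodic accumulation configuration $\varphi$ that is doubly periodic on a maximal unbounded convex region $\mathcal{K}$ (Proposition \ref{pps_exist_maximal_conf}), bounds the periods of $\varphi|\sob{\mathcal{K}}$ in terms of edge cardinalities of an mlc generating set (Proposition \ref{afm_principal}), and finally derives the contradiction not by telescoping along a chain inside $\mathcal{S}$ but by counting the distinct $\mathcal{S}^*$-configurations that appear at the boundary transition from double periodicity to aperiodicity, violating the mlc inequality (\ref{eq_mlc-generating}). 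Your dichotomy, asserted at the level of a single pair of layers of a fixed finite set, has no proof and no analogue in the Cyr--Kra machinery; as stated it is at least as hard as the theorem.

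There are also structural problems with the chain itself. Removing or appending lattice layers along an edge of a convex lattice set does not in general preserve quasi-regularity (the edge structure of the convex hull changes, and the antiparallel equal-length condition is lost), so the intermediate sets $\mathcal{S}_0\subsetneq\cdots\subsetneq\mathcal{S}_N=\mathcal{S}$ need not satisfy the hypotheses you want to apply at each step; note the paper only uses quasi-regularity once, in Proposition \ref{pps_lp-balanceado}, to cut $\mathcal{S}$ by an axis of symmetry, and otherwise works with generating and balanced sets that need not be quasi-regular. Moreover, the nonexpansive direction $\ell$ need not be parallel to any edge of the given quasi-regular set, and your proposed ``deformation'' of the chain to align an edge with $\ell$ has no justification that the complexity hypothesis $P_{\eta}(\mathcal{S})\leq\frac{1}{2}|\mathcal{S}|+|A|-1$ transfers to the deformed sets. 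Finally, even granting the growth lemma, the telescoping with base $P_{\eta}(\{g\})=|A|$ gives only $P_{\eta}(\mathcal{S})\geq\frac{1}{2}|\mathcal{S}|+|A|-\frac{1}{2}$, so you would still need to track where strict inequality occurs; this is minor compared to the missing lemma, but it shows the bookkeeping has not been checked.
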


The proof of Theorem \ref{main_result} will be done by contradiction. The global strategy consists  basically in showing the existence of an aperiodic accumulation configuration which is doubly periodic on a unbounded  region, and then in  arguing that the cardinality of subconfigurations arising in the boundary transition from doubly periodicity to aperiodicity would be greater than possible. We will first apply the Alphabetical Morse-Hedlund Theorem to certain strips, defined from special generating sets, called balanced sets (see Definitions~\ref{dfn_generating} and \ref{dfn_lp_balanceado}), to get periodic con- figurations, which will allow us, by an inductive argument, to construct such an accumulation point. 

The rest of the paper is organized as follows. The next section reviews key con- \linebreak cepts and results. In Section \ref{sec4}, with an alphabetical viewpoint, we obtain a wide range of propositions which will be useful for the proof of the main theorem. Rou- ghly speaking, these results connect one-sided nonexpansive directions (which will be precisely defined in Section \ref{sec3}) and periodicity. In Section \ref{sec5}, we define a special class of generating sets (sets that determine a larger region from a smaller region). Following methods highlighted by Cyr and Kra \cite{van}, in the Section \ref{sec6}, we provide bounds for the periods of any aperiodic configuration with an unbounded doubly periodic region (see Definition \ref{ll'maximalperconf}) and in the last section we prove Theorem \ref{main_result}. The results of this article are based on the Ph.D thesis of the first author written under the guidance of the second author.

\section{Fundamental notions and facts}
\label{sec3}

From now on, we will assume that the configuration $\eta \in A^{\bb{Z}^2}$ always contains all letters of the alphabet $A$, i.e., $P_{\eta}(\{g\}) = |A|$ for all $g \in \bb{Z}^2$.

\subsection{Alphabetical Morse-Hedlund Theorem}

We reproduce here an alphabetical version of the celebrated result of Morse and Hedlund  (see Theorem 7.4 in \cite{hedlund}).

Given a sequence $\xi = (\xi_t)_{t \in U} \in A^U$, where $U = \{a,a+1,\ldots\}$ and $a \in \bb{Z}$, the \emph{$n$-complexity} of $\xi$, denoted by $P_{\xi}(n)$, is defined to be the number of distinct words of the form $\xi_{t}\xi_{t+1} \cdots \xi_{t+n-1}$, where $\{t,t+1, \ldots, t+n-1\} \subset U$. Such sequence is said to be \emph{periodic} if there exists an integer $m \geq 1$ (called period) that satisfies $\xi_{i+m} = \xi_{i}$ for all $i \in U$.

\begin{theorem}[Alphabetical Morse-Hedlund Theorem]\label{AMHT}
Let $\xi = (\xi_i)_{i \in U} \in A^{U}$ be a sequence  that contains all letters of $A$, where $U = \bb{Z}$ or $U = \{a,a+1,\ldots\}$ for some $a \in \bb{Z}$. Suppose there exists $n_0 \in \bb{N}$ such that $P_{\xi}(n_{0}) \leq n'_0$, where $n'_0 := n_{0}+|A|-2$.
\begin{enumerate}
	\item[(i)] If $U = \{a,a+1,\ldots\}$, then the sequence $(\xi_{t})_{t \in U+n'_0} \in A^{U+n'_0}$ is periodic of period at most $n'_{0}$;
	\item[(ii)] If $U = \bb{Z}$, then the sequence $\xi \in A^{\bb{Z}}$ is periodic of period at most $n'_0$.
\end{enumerate}
\end{theorem}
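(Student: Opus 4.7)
The strategy is to adapt the classical Morse--Hedlund forward-determinism argument, using the hypothesis that $\xi$ contains every letter as the extra ingredient that sharpens the quantitative bound.

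First, I would observe that $P_\xi(1) = |A|$ by hypothesis and that $n \mapsto P_\xi(n)$ is non-decreasing. If this function were strictly increasing throughout $\{1, 2, \ldots, n_0\}$, a one-line induction would yield $P_\xi(n_0) \geq P_\xi(1) + (n_0 - 1) = n_0 + |A| - 1 = n_0' + 1$, contradicting $P_\xi(n_0) \leq n_0'$. Therefore there is some $n^* \in \{1, \ldots, n_0 - 1\}$ with $P_\xi(n^* + 1) = P_\xi(n^*)$, and hence $P_\xi(n^*) \leq P_\xi(n_0) \leq n_0'$. This equality forces each $n^*$-block occurring in $\xi$ to admit a unique right-extension to an $(n^*+1)$-block in $\xi$: each such block has at least one right-extension (since $U$ is infinite to the right), and the sum over all $n^*$-blocks of the number of their right-extensions equals $P_\xi(n^*+1) = P_\xi(n^*)$, forcing every term to be exactly one. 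This defines a deterministic ``next $n^*$-block'' self-map $\sigma$ on the finite set of $n^*$-blocks of $\xi$, which has cardinality at most $n_0'$.

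For case (i), with $U = \{a, a+1, \ldots\}$, iterating $\sigma$ from the initial $n^*$-block at position $a$ produces a trajectory inside a set of size at most $n_0'$; by pigeonhole it consists of a transient of length $t$ followed by a cycle of length $p$, with $t + p \leq n_0'$. This translates into $\xi_{j + p} = \xi_j$ for every $j \geq a + t$. Since $t \leq n_0' - p < n_0'$, restricting to $U + n_0'$ gives a sequence periodic of period $p \leq n_0'$, as claimed.

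For case (ii), with $U = \bb{Z}$, the symmetric counting argument (each $n^*$-block of $\xi$ now also has at least one left-extension, because $U$ is infinite on both sides) shows that every $n^*$-block admits a unique left-extension as well, so $\sigma$ becomes a bijection on the finite set of $n^*$-blocks. Its orbit starting from any given $n^*$-block is genuinely periodic of period at most $n_0'$, and this translates directly into a global period of $\xi$ not exceeding $n_0'$. The one real subtlety is the transient/cycle bookkeeping in case (i) --- making sure the bound $t + p \leq n_0'$ suffices to guarantee periodicity from position $a + n_0'$ onwards; everything else is a direct alphabetical refinement of the textbook Morse--Hedlund proof.
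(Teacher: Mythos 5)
Your proof is correct. The paper states Theorem \ref{AMHT} without proof, presenting it as an alphabetical version of Theorem 7.4 of Morse and Hedlund, so there is no in-paper argument to compare against; your write-up supplies exactly the expected one. It is the classical extension-counting proof in which the only genuinely alphabetical ingredient is the base value $P_{\xi}(1)=|A|$, which turns the failure of strict growth below the threshold $n_0+|A|-2$ into a level $n^*$ with $P_{\xi}(n^*+1)=P_{\xi}(n^*)$; from there the unique right-extensions give the deterministic block map, the pigeonhole/transient bookkeeping with $t+p\leq n_0'$ correctly yields case (i) on $U+n_0'$, and the additional unique left-extensions available when $U=\mathbb{Z}$ make the block map a bijection, giving genuine periodicity of period at most $n_0'$ in case (ii).
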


As an immediate consequence, we have the next result.

\begin{corollary}\label{lem_per_convnulo}
Let $\eta \in A^{\bb{Z}^2}$ and suppose $P_{\eta}(\mathcal{S}) \leq$ $|\mathcal{S}|+|A|-2$ for some set $\mathcal{S} \in \mathcal{F}_{C}$. If $\conv(\mathcal{S})$ has null area, then $\eta$ is periodic.
\end{corollary}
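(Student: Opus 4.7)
The geometric input is clear: since $\mathcal{S} \in \mathcal{F}_{C}$ has $\conv(\mathcal{S})$ of null area, $\mathcal{S}$ is a consecutive string of collinear lattice points. Explicitly, there exist a primitive $v \in \bb{Z}^2$ and $g_0 \in \bb{Z}^2$ with $\mathcal{S} = \{g_0 + iv : 0 \le i \le k-1\}$, where $k := |\mathcal{S}|$. Translation invariance of the $\mathcal{S}$-complexity lets me assume $g_0 = 0$. Pick $w \in \bb{Z}^2$ so that $\{v, w\}$ is a $\bb{Z}$-basis of $\bb{Z}^2$, giving the partition $\bb{Z}^2 = \bigsqcup_{j \in \bb{Z}} L_{j}$ with $L_j := jw + \bb{Z} v$. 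For each $j$, let $\xi^{(j)} \in A^{\bb{Z}}$ be the bi-infinite sequence $\xi^{(j)}_i := \eta_{jw + iv}$ and $A_j \subseteq A$ its alphabet. Since every $k$-factor of $\xi^{(j)}$ is an $\mathcal{S}$-configuration of $\eta$, one gets $P_{\xi^{(j)}}(k) \le P_\eta(\mathcal{S}) \le k + |A| - 2$.

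I would then show that every $\xi^{(j)}$ is periodic with smallest period $m_j$ bounded by the uniform constant $M := k + |A| - 2$. When $A_j = A$, this follows at once from Theorem~\ref{AMHT}(ii) applied with $n_0 = k$. When $A_j \subsetneq A$, I would argue by contradiction: if $\xi^{(j)}$ were aperiodic, the classical Morse--Hedlund strict-increase property $P_{\xi^{(j)}}(n+1) > P_{\xi^{(j)}}(n)$, iterated from $P_{\xi^{(j)}}(1) = |A_j|$, would yield $P_{\xi^{(j)}}(k) \ge |A_j| + k - 1$. On the other hand, because $\eta$ contains all letters of $A$, each missing letter $b \in A \setminus A_j$ must appear on some line other than $L_j$, producing at least one $\mathcal{S}$-configuration outside the $k$-factors of $\xi^{(j)}$; a careful count of these extra configurations (accounting for the fact that several missing letters may share a single $\mathcal{S}$-configuration from another line) pushes $P_\eta(\mathcal{S})$ strictly above $k + |A| - 2$, contradicting the hypothesis. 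Hence $\xi^{(j)}$ is periodic, and an analogous counting controls its smallest period.

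Once every $\xi^{(j)}$ is periodic with $m_j \in \{1, \dots, M\}$, the integer $L := \operatorname{lcm}(1, 2, \dots, M)$ is finite and $\eta_{g+Lv} = \eta_g$ for all $g \in \bb{Z}^2$, making $Lv$ a period of $\eta$. The main obstacle is the book-keeping in the case $A_j \subsetneq A$: the hypothesis $P_\eta(\mathcal{S}) \le k + |A|-2$ is global, so an individual $\mathcal{S}$-configuration from another line can host several letters of $A \setminus A_j$ at once. Quantifying precisely the number of extra configurations forced by the missing letters — perhaps by induction on $|A \setminus A_j|$, or via a Rauzy-type graph analysis on $L(\mathcal{S}, \eta)$ — is where the delicate work lies.
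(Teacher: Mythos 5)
Your overall route -- write $\mathcal{S}=\{g_0+iv:0\le i\le k-1\}$ with $v$ primitive (this is exactly what ``convex with null-area hull'' gives), restrict $\eta$ to the lines $L_j=jw+\bb{Z}v$, apply the Alphabetical Morse--Hedlund Theorem to each line sequence $\xi^{(j)}$, and finish with an lcm of uniformly bounded periods -- is precisely the natural reading of the paper, which offers no written argument and simply calls the corollary an immediate consequence of Theorem~\ref{AMHT}. So there is no divergence of method. The only issue is the step you yourself flag as ``where the delicate work lies'': the case $A_j\subsetneq A$. As written, that step is left open, and the machinery you gesture at (induction on $|A\setminus A_j|$, Rauzy graphs) is not needed.

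The gap closes in one line if you anchor each missing letter at the initial position of the window. For $b\in A\setminus A_j$ choose $g_b$ with $\eta_{g_b}=b$ (possible by the standing assumption of Section~\ref{sec3}) and consider the $\mathcal{S}$-configuration $\eta_{g_b}\eta_{g_b+v}\cdots\eta_{g_b+(k-1)v}$. Distinct letters $b$ give distinct configurations (they differ already in the first symbol), and none of them is a $k$-factor of $\xi^{(j)}$, since its first symbol lies outside $A_j$. Hence
\[
P_{\xi^{(j)}}(k)\;\le\;P_{\eta}(\mathcal{S})-|A\setminus A_j|\;\le\;k+|A_j|-2 .
\]
Your worry that several missing letters might ``share'' one configuration disappears because the letters are pinned to the same (first) coordinate. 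With this inequality you need neither the strict-increase argument nor a separate count to control the least period: apply Theorem~\ref{AMHT}(ii) with the alphabet $A_j$ (which $\xi^{(j)}$ does contain in full) to get that $\xi^{(j)}$ is periodic of period at most $k+|A_j|-2\le k+|A|-2=:M$; this covers the case $A_j=A$ as well. Then $L:=\operatorname{lcm}(1,\dots,M)$ gives $\eta_{g+Lv}=\eta_g$ for all $g$, as in your plan. (Incidentally, if $|\mathcal{S}|=1$ the hypothesis $P_\eta(\mathcal{S})\le |A|-1$ is incompatible with $P_\eta(\{g\})=|A|$, so the segment case is the only one that occurs.)
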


\subsection{Expansive subdynamics}
Let $F$ be a subspace of $\bb{R}^d$. For each $g \in \bb{Z}^d$, de- note $\dist(g,F) := \inf \{\|g-u\| : u \in F\}$, where $\| \cdot \|$ is the Euclidean norm in $\bb{R}^d$. Given $t > 0$, the $t$-neighbourhood of $F$ is defined as $$F^{t} := \left\{g \in \bb{Z}^d : \dist(g,F) \leq t\right\}.$$

Let $X \subset A^{\bb{Z}^d}$ be a subshift (a closed subset which is invariant for shift applications). Following Boyle and Lind \cite{boyle}, we say that a subspace $F \subset \bb{R}^d$ is \emph{expansive} on $X$ if there exists $t>0$ such that $x|\sob{F^{t}} = y|\sob{F^{t}}$ implies $x = y$ for any $x,y \in X$. If a subspace fails to meet this condition, it is called a \emph{nonexpansive} subspace on $X$. Boyle and Lind \cite[Theorem~3.7]{boyle} showed that if $X \subset A^{\bb{Z}^d}$ is an infinite sub\-shift, then, for $0 \leq k < d$, there exists a $k$-dimensional subspace of $\bb{R}^d$ that is nonexpan- sive on $X$.

As an immediate corollary from Boyle and Lind's Theorem, we highlight the following result. 

\begin{corollary}\label{cor_2_periodic}
For $\eta \in A^{\bb{Z}^2}$, if every one-dimensional subspace of $\bb{R}^2$ is expansive on $X_{\eta}$, then $\eta$ is doubly periodic.
\end{corollary}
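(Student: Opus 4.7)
The plan is to apply Boyle and Lind's theorem contrapositively. Assume every one-dimensional subspace of $\bb{R}^2$ is expansive on $X_{\eta}$; I want to conclude that $X_{\eta}$ must be finite. Indeed, the version of Boyle and Lind's theorem quoted just above asserts that any infinite subshift $X \subset A^{\bb{Z}^d}$ admits, for each $0 \leq k < d$, a $k$-dimensional nonexpansive subspace of $\bb{R}^d$. Taking $d=2$ and $k=1$, the existence of a one-dimensional nonexpansive subspace would contradict the hypothesis. Hence $X_{\eta}$ is finite.

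Next I would pass from finiteness of $X_{\eta}$ to double periodicity of $\eta$. Since $Orb\,(\eta) \subset X_{\eta}$ and $X_{\eta}$ is finite, the orbit $Orb\,(\eta) = \{T^{u}\eta : u \in \bb{Z}^2\}$ is itself a finite set. Consider the stabilizer $\Gamma := \{u \in \bb{Z}^2 : T^{u}\eta = \eta\}$. This is a subgroup of $\bb{Z}^2$, and the map $u \mapsto T^{u}\eta$ factors through the quotient $\bb{Z}^2/\Gamma$ to give a bijection with $Orb\,(\eta)$. Since $Orb\,(\eta)$ is finite, $\Gamma$ has finite index in $\bb{Z}^2$.

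Finally, a finite-index subgroup of $\bb{Z}^2$ necessarily contains two linearly independent vectors; for instance, writing $[\bb{Z}^2 : \Gamma] = m < \infty$, both $me_1$ and $me_2$ lie in $\Gamma$, and they are linearly independent. Both of these are periods of $\eta$, so $\eta$ is doubly periodic, as required.

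There is no substantial obstacle here beyond invoking Boyle and Lind's theorem: once infiniteness of $X_{\eta}$ is ruled out by the quoted result, the passage from finite orbit to doubly periodic configuration is a standard and elementary group-theoretic argument, justifying the ``immediate corollary'' qualification used in the excerpt.
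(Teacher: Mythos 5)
Your proof is correct and follows exactly the intended route: the contrapositive of Boyle--Lind's theorem (with $d=2$, $k=1$) forces $X_{\eta}$ to be finite, and the standard orbit--stabilizer argument then yields a finite-index subgroup of $\bb{Z}^2$ of periods, hence two linearly independent periods. The paper states this as immediate and gives no proof, and your write-up supplies precisely the omitted details.
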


The next result allows us to conclude that every configuration with at least two nonexpansive one-di\-men\-sional subspaces on $X_{\eta}$ is not periodic. 

\begin{lemma}\label{pps_perio_expan}
If $\eta \in A^{\bb{Z}^2}$ is periodic of period $h \in (\bb{Z}^2)^*$, then every one-di\-men\-sional subspace $F \subset \bb{R}^2$ that does not contain $h$ is expansive on $X_{\eta}$.
\end{lemma}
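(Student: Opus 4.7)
The plan is to show that $h$ is a period of every configuration in $X_\eta$, and then to reconstruct any such configuration from its restriction to a strip around $F$ of width comparable to $\|h\|$.

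\emph{Step 1 (extending the period to $X_\eta$).} Since the $\bb{Z}^2$-action commutes, $T^h(T^u \eta) = T^u(T^h \eta) = T^u \eta$ for every $u \in \bb{Z}^2$, so $T^h$ fixes every point of $Orb\,(\eta)$; by continuity of $T^h$, it then fixes the closure $X_\eta$. Hence $x_{g+h} = x_g$ for every $x \in X_\eta$ and every $g \in \bb{Z}^2$.

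\emph{Step 2 (choosing an expansivity constant).} The hypothesis that $F$ does not contain $h$ means that the nonzero vector $h$ has nonzero orthogonal component with respect to $F$, so $d := \dist(h,F) > 0$. I claim that any $t \geq d/2$ (for concreteness $t := \|h\|$) witnesses expansivity of $F$ on $X_\eta$. Indeed, denoting by $\pi \colon \bb{R}^2 \to F^\perp$ the orthogonal projection, for any $g \in \bb{Z}^2$ one can pick $n \in \bb{Z}$ to be the nearest integer to $\pi(g)/\pi(h)$, so that $|\pi(g - nh)| \leq d/2$ and hence $g - nh \in F^t$. Step~1 yields $x_g = x_{g-nh}$ for every $x \in X_\eta$. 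Consequently, if $x, y \in X_\eta$ satisfy $x|\sob{F^{t}} = y|\sob{F^{t}}$, then $x_g = x_{g-nh} = y_{g-nh} = y_g$ for every $g \in \bb{Z}^2$, giving $x = y$.

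There is no really hard step. The only conceptual point worth underlining is Step~1: the hypothesis only supplies a period for $\eta$ itself, not a priori for the other elements of $X_\eta$, but the fact that $T^h$ is continuous and that the shifts commute immediately propagate the period to the whole orbit closure. After that, expansivity reduces to the elementary one-dimensional observation that $\bb{Z}^2 = \langle h\rangle_{\bb{Z}} + F^t$ whenever $t \geq \dist(h,F)/2$.
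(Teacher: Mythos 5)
Your proposal is correct and follows essentially the same route as the paper: pick a strip $F^t$ wide enough relative to $\dist(h,F)$, translate any lattice point into the strip by a suitable integer multiple of $h$, and use that every $x \in X_{\eta}$ has period $h$ to conclude $x = y$ from agreement on $F^t$. The only difference is that you spell out (via commutativity of the shifts and continuity of $T^h$) why the period $h$ propagates from $\eta$ to all of $X_{\eta}$, a point the paper's proof asserts without detail.
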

\begin{proof}
Let $t>0$ be such that $\{-h,h\} \subset F^{t}$. For each $g \in \bb{Z}^2$, there exists an inte- ger $m \in \bb{Z}$ such that $g + mh \in F^{t}$. Since all con\-fi\-gu\-rations $x,y \in X_{\eta}$ are periodic of period $h \in (\bb{Z}^2)^*$, if $x|\sob{F^{t}} = y|\sob{F^{t}}$, then $x_{g} = x_{g + mh} = y_{g + mh} = y_{g}$.
\end{proof}

In the above lemma, note that there is no assumption about the expansiveness or nonexpansiveness of the line which contains the period $h$, so that doubly periodic configurations naturally satisfy its statement. Actually, if a configuration is doubly periodic, by applying Lemma \ref{pps_perio_expan} to linearly independent periods, we see that all its one-dimensional subspaces  are expansive.

A convex set $\mathcal{H} \subset \bb{Z}^2$ is said to be a \emph{half plane} if $\conv(\mathcal{H})$ has non-null area and $E(\mathcal{H})$ has only a single edge. In this case, the single edge $\ell \in E(\mathcal{H})$ is a line in $\bb{R}^2$. For a line $\ell \subset \bb{R}^2$, we also use $\ell$ to denote this line endowed of a given orientation, and $\lle$ to denote the same line endowed of the opposite orientation. We believe that, according to the context, the reader will easily realize if we refer to a line or to an oriented line.  For an oriented line $\ell \subset \bb{R}^2$ that contains at least one point of $\bb{Z}^2$, let $\mathcal{H}(\ell) \subset \bb{Z}^2$ denote the unique half plane for which $\ell$ is its single (positively) oriented edge. This means not only that $\ell \in E(\mathcal{H}(\ell))$ but also the orientation of the edge of the half plane $\mathcal{H}(\ell)$ agrees with the orientation of $\ell$.

\begin{notation}\label{not_nextline}
If $\ell$ is a rational oriented line (i.e., an oriented line with rational angular coefficient) that contains at least one point of $\bb{Z}^2$, let $\ell^{(-)} \subset \bb{R}^2$ denote the oriented line parallel to $\ell$ such that the half plane $\mathcal{H}(\ell^{(-)})$ is minimal (with respect to partial or\-der\-ing by inclusion) among all half planes that strictly contains $\mathcal{H}(\ell)$. Likewise, let $\ell^{(+)} \subset \bb{R}^2$ denote the oriented line parallel to $\ell$ such that the half plane $\mathcal{H}(\ell^{(+)})$ is maximal among all half planes that are strictly contained in $\mathcal{H}(\ell)$.
\end{notation}

We use $\bb{G}_1$ to denote the set of all lines through the origin in $\bb{R}^2$, i.e., the set of one-dimensional subspaces. In a slight abuse of notation, we also say that oriented lines belong to $\bb{G}_1$. If $\ell \in \bb{G}_1$ is a rational oriented line, let $\vec{v}_{\ell} \in (\bb{Z}^2)^*$ denote the non-null vector parallel to $\ell$ of minimal norm.

In the sequel, we restate a refined version of the classical notion of expansiveness called one-sided nonexpansiveness, and introduced by Cyr and Kra in \cite{van}.

\begin{definition}
Given $\eta \in A^{\bb{Z}^2}$,  we say that an oriented line $\ell \in \bb{G}_1$ is a one-sided expansive direction on $X_{\eta}$ if $x|\sob{\mathcal{H}(\ell)} = y|\sob{\mathcal{H}(\ell)}$ implies $x = y$ for any $x,y \in X_{\eta}$. If an oriented line $\ell \in \bb{G}_1$ fails to meet this condition, it is called a one-sided nonex- pansive direction on $X_{\eta}$. 
\end{definition}

As $X_{\eta}$ is a compact subshift of $A^{\bb{Z}^2}$, it is easy to see that $\ell \in \bb{G}_1$ is an expansive line on $X_{\eta}$ if, and only if, $\ell, \lle \in \bb{G}_1$ are one-sided expansive directions on $X_{\eta}$. More- over, in all the text, whenever we consider a nonexpansive line, this actually means a nonexpansive rational line, since any irrational line is expansive on $X_{\eta}$ if the configuration $\eta$ satisfies $P_{\eta}(\mathcal{U}) \leq |\mathcal{U}|+|A|-2$ for some $\mathcal{U} \in \mathcal{F}_{C}$ (see Remark~\ref{Obs_irracional_line}).

\subsection{Generating sets}
The notion of generating set, deeply developed in \cite{van}, underlines configurations that admit a unique extension on extreme points of a given convex set.

\begin{definition}\label{dfn_generating}
Let $\eta \in A^{\bb{Z}^2}$ and suppose $\mathcal{S} \subset \bb{Z}^2$ is a finite set. A point $g \in \mathcal{S}$ is said to be $\eta$-generated by $\mathcal{S}$ if $P_{\eta}(\mathcal{S}) = P_{\eta}(\mathcal{S} \backslash \{g\})$. A set $\mathcal{S} \in \mathcal{F} _{C}$ for which each vertex is $\eta$-generated is called an $\eta$-ge\-ne\-rating set. 
\end{definition}

Note that $g \in \mathcal{S}$ is $\eta$-generated by $\mathcal{S}$ if, and only if, for every $\gamma \in L(\mathcal{S}\backslash\{g\},\eta)$, there exists a unique $\gamma' \in L(\mathcal{S},\eta)$ such that $\gamma'|\sob{\mathcal{S}\backslash\{g\}} = \gamma$. 

\begin{remark}\label{obs_exist_generating}
If $P_{\eta}(\mathcal{U}) \leq|\mathcal{U}|+|A|-2$ for some set $\mathcal{U} \in \mathcal{F}_{C}$, then any convex set $\mathcal{S} \subset \mathcal{U}$ that is minimal among all convex sets $\mathcal{T} \subset \mathcal{U}$ fulfilling $P_{\eta}(\mathcal{T}) \leq |\mathcal{T}|+|A|-2$ is an $\eta$-generating set. The fact that $1+|A|-2 < |A| = P_{\eta}(\{g\})$ for all $g \in \bb{Z}^2$ en- sures that $\mathcal{S}$ has at least two points.
\end{remark}

\begin{definition}\label{retabase}
Given an oriented line $\ell \subset \bb{R}^2$ and a convex set $\mathcal{S} \subset \bb{Z}^2$, we use $\ell_{\scriptscriptstyle\mathcal{S}}$ to denote the oriented line $\ell' \subset \bb{R}^2$ parallel to $\ell$ such that $\mathcal{S} \subset \mathcal{H}(\ell')$ and $\ell' \cap \mathcal{S} \neq \emptyset$.
\end{definition}

Note that either $\ell_{\scriptscriptstyle\mathcal{S}} \cap \mathcal{S}$ is a vertex of $\mathcal{S}$ or $\conv(\ell_{\scriptscriptstyle\mathcal{S}} \cap \mathcal{S}) \subset \bb{R}^2$ is an edge of $\mathcal{S}$.

The next lemma is an immediate consequence of the existence of generating sets. Its proof is straightforward from definitions.

\begin{lemma}\label{lem_direxp}
If $\ell \in \bb{G}_1$ is an oriented line and there exists a set $\mathcal{S} \in$ $\mathcal{F}_{C}$ such that $\ell_{\scriptscriptstyle\mathcal{S}} \cap \mathcal{S} = \{g_0\}$ is $\eta$-generated by $\mathcal{S}$, then $\ell$ is a one-sided expansive dir\-ection on $X_{\eta}$.
\end{lemma}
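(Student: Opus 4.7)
The plan is to prove the lemma by showing that any two configurations $x, y \in X_{\eta}$ that agree on $\mathcal{H}(\ell)$ must agree on all of $\bb{Z}^2$. I would proceed layer-by-layer, propagating the agreement across successive parallel lattice lines outside $\mathcal{H}(\ell)$, obtained by iterating the passage $\ell \mapsto \ell^{(-)}$ of Notation \ref{not_nextline}.

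The geometric engine is the following. The hypothesis $\ell_{\scriptscriptstyle\mathcal{S}} \cap \mathcal{S} = \{g_0\}$ says that $g_0$ is the unique element of $\mathcal{S}$ lying on its supporting line $\ell_{\scriptscriptstyle\mathcal{S}}$, so $\mathcal{S} \backslash \{g_0\}$ sits strictly inside $\mathcal{H}(\ell_{\scriptscriptstyle\mathcal{S}})$. Thus, for any $g \in \bb{Z}^2$, the translate $\mathcal{S}' := \mathcal{S} + (g - g_0)$ has $g$ as its unique element on the parallel supporting line $\ell_{\scriptscriptstyle\mathcal{S}'}$, and $\mathcal{S}' \backslash \{g\}$ strictly inside $\mathcal{H}(\ell_{\scriptscriptstyle\mathcal{S}'})$. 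Specializing to $g \in \ell^{(-)} \cap \bb{Z}^2$ identifies $\ell_{\scriptscriptstyle\mathcal{S}'}$ with $\ell^{(-)}$; by the minimality defining $\ell^{(-)}$, the lattice points of $\mathcal{H}(\ell^{(-)}) \backslash \ell^{(-)}$ are precisely those of $\mathcal{H}(\ell) \cap \bb{Z}^2$, so $\mathcal{S}' \backslash \{g\} \subset \mathcal{H}(\ell)$.

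The inductive step then runs as follows. Given $g \in \ell^{(-)} \cap \bb{Z}^2$, the agreement of $x$ and $y$ on $\mathcal{H}(\ell)$ gives $x|\sob{\mathcal{S}' \backslash \{g\}} = y|\sob{\mathcal{S}' \backslash \{g\}}$. Two further ingredients close the step. First, translation invariance of the complexity promotes the fact that $g_0$ is $\eta$-generated by $\mathcal{S}$ into the fact that $g$ is $\eta$-generated by $\mathcal{S}'$. Second, because $X_{\eta}$ is the orbit closure of $\eta$, one has $L(\mathcal{T}, z) \subseteq L(\mathcal{T}, \eta)$ for every finite $\mathcal{T} \subset \bb{Z}^2$ and every $z \in X_{\eta}$. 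Together these force the unique extension inside $L(\mathcal{S}', \eta)$ of the common restriction $x|\sob{\mathcal{S}' \backslash \{g\}}$ to coincide with both $x|\sob{\mathcal{S}'}$ and $y|\sob{\mathcal{S}'}$, yielding $x_g = y_g$. Iterating with $\ell$ replaced by $\ell^{(-)}$ exhausts $\bb{Z}^2$.

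As the excerpt already signals, the argument is essentially a careful unfolding of definitions and poses no serious obstacle. The one step that deserves explicit attention is the transfer of the $\eta$-generation property to configurations of $X_{\eta}$, which is exactly where the orbit-closure structure enters.
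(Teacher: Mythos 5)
Your core mechanism is the right one and is exactly the ``unfolding of definitions'' the paper has in mind (the paper offers no written proof): translate $\mathcal{S}$ so that the distinguished point sits at the target site $g$, use translation invariance of $P_{\eta}$ to see that $g$ is $\eta$-generated by the translate, use $L(\mathcal{T},z)\subset L(\mathcal{T},\eta)$ for $z\in X_{\eta}$, and invoke the unique-extension characterization of $\eta$-generated points to propagate the equality $x_g=y_g$. For a \emph{rational} $\ell$ your layer-by-layer induction via $\ell\mapsto\ell^{(-)}$ is complete and correct.

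There is, however, a genuine gap in coverage: the lemma is stated for an arbitrary oriented line $\ell\in\bb{G}_1$, and the paper applies it in Remark \ref{Obs_irracional_line} precisely to \emph{irrational} lines, whereas your induction is built on Notation \ref{not_nextline}, which defines $\ell^{(-)}$ only for rational oriented lines. For an irrational $\ell$ there is no ``next'' lattice line: the distances of the lattice points outside $\mathcal{H}(\ell)$ to $\ell$ accumulate at $0$, so no minimal half plane strictly containing $\mathcal{H}(\ell)$ exists and the layer structure you iterate over is not available (in particular your identification of $\mathcal{H}(\ell^{(-)})\backslash\ell^{(-)}$ with $\mathcal{H}(\ell)$ has no analogue). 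The repair is small and makes the argument uniform in $\ell$: set $\delta:=\min\{\dist(s,\ell_{\scriptscriptstyle\mathcal{S}}): s\in\mathcal{S}\backslash\{g_0\}\}>0$ and induct on $n\geq 0$ over the regions $\mathcal{H}(\ell)\cup\{h\notin\mathcal{H}(\ell):\dist(h,\ell)\leq n\delta\}$. If $g\notin\mathcal{H}(\ell)$ satisfies $\dist(g,\ell)\leq(n+1)\delta$, then every point of $\mathcal{S}+(g-g_0)$ other than $g$ lies at distance at least $\delta$ on the $\mathcal{H}(\ell)$-side of the line through $g$ parallel to $\ell$, hence lies in the region of index $n$; your unique-extension step then gives $x_g=y_g$, and the induction exhausts $\bb{Z}^2$ for rational and irrational $\ell$ alike, without ever invoking $\ell^{(-)}$.
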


\begin{remark}\label{Obs_irracional_line}
Given $\eta \in A^{\bb{Z}^2}$ with $P_{\eta}(\mathcal{U}) \leq |\mathcal{U}|+|A|-2$ for some $\mathcal{U} \in \mathcal{F}_{C}$, if $\ell \in \bb{G}_1$ is an irrational oriented line, then there exists an $\eta$-generating set $\mathcal{S} \in \mathcal{F}_{C}$ such that $\ell_{\scriptscriptstyle\mathcal{S}} \cap \mathcal{S} = \{g_0\}$ is $\eta$-generated by $\mathcal{S}$ and so, from Lemma~\ref{lem_direxp}, it follows that $\ell$ is a one-sided expansive direction on $X_{\eta}$. Furthermore, if $\ell \in \bb{G}_1$ is a one-sided nonexpansive direction on $X_{\eta}$, the above lemma also ensures that every $\eta$-generating set $\mathcal{S} \in \mathcal{F}^{V\!ol}_{C}$ has an edge parallel to $\ell$, i.e., $|\ell_{\scriptscriptstyle\mathcal{S}} \cap \mathcal{S}| \geq 2$.
\end{remark}

The next lemma is an immediate consequence of the compactness of the subshift.

\begin{lemma}\label{lem_dir_exp}
Let  $\ell \in \bb{G}_1$ be a rational oriented line. If $\ell$ is a one-sided expansive direction on $X_{\eta}$, then there is a set $\mathcal{S} \in \mathcal{F}_{C}^{V\!ol}$ such that $\ell_{\scriptscriptstyle\mathcal{S}} \cap \mathcal{S} = \{g_0\}$ is $\eta$-generated by $\mathcal{S}$.
\end{lemma}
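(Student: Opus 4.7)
The plan is to turn one-sided expansiveness into a statement of finite determination by compactness, and then wrap the finite witnessing set into a convex shape whose unique leading lattice point (in the direction perpendicular to $\ell$, away from $\mathcal{H}(\ell)$) is the point I want to $\eta$-generate.

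First I would fix a lattice point $g_0 \in \ell^{(-)} \cap \bb{Z}^2$, which exists by the very definition of $\ell^{(-)}$ and lies outside $\mathcal{H}(\ell)$. Exhausting $\mathcal{H}(\ell) = \bigcup_{n} T_n$ by an increasing sequence of finite sets, I would run the standard diagonal/compactness argument: if no $T_n$ had the property that $x|\sob{T_n} = y|\sob{T_n}$ implies $x_{g_0} = y_{g_0}$ for all $x,y \in X_\eta$, then extracting a convergent subsequence of witnessing pairs inside the compact subshift $X_\eta$ would produce $x,y \in X_\eta$ with $x|\sob{\mathcal{H}(\ell)} = y|\sob{\mathcal{H}(\ell)}$ but $x_{g_0} \neq y_{g_0}$, contradicting one-sided expansiveness of $\ell$. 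So some finite $T := T_n \subset \mathcal{H}(\ell)$ already witnesses the determination of $x_{g_0}$.

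Next I would enlarge $T$ inside $\mathcal{H}(\ell)$, if necessary, so that $T \cup \{g_0\}$ is not contained in a single line, and set $\mathcal{S} := \conv(T \cup \{g_0\}) \cap \bb{Z}^2$. This is automatically convex, finite, and has positive area, hence $\mathcal{S} \in \mathcal{F}_{C}^{V\!ol}$. Because $T \subset \mathcal{H}(\ell)$ while $g_0$ sits on the next parallel lattice line $\ell^{(-)}$ on the side of $\ell$ opposite to $\mathcal{H}(\ell)$, the point $g_0$ is the unique extremal lattice point of $\mathcal{S}$ in the direction perpendicular to $\ell$ and away from $\mathcal{H}(\ell)$; consequently $\ell_{\scriptscriptstyle\mathcal{S}} = \ell^{(-)}$ and $\ell_{\scriptscriptstyle\mathcal{S}} \cap \mathcal{S} = \{g_0\}$. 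Finally, since $T \subset \mathcal{S} \setminus \{g_0\}$, the defining property of $T$ forces $x|\sob{\mathcal{S}\setminus\{g_0\}}$ to determine $x_{g_0}$ for every $x \in X_\eta$; together with $L(\mathcal{S},\eta) = \{x|\sob{\mathcal{S}} : x \in X_\eta\}$ (which holds by compactness of $X_\eta$), this yields $P_\eta(\mathcal{S}) = P_\eta(\mathcal{S}\setminus\{g_0\})$, so $g_0$ is $\eta$-generated by $\mathcal{S}$.

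The only real subtlety I expect is the geometric bookkeeping: drawing $g_0$ from $\ell^{(-)}$ rather than from $\ell$ itself is essential, because otherwise points of $T$ lying on $\ell$ would share the line $\ell_{\scriptscriptstyle\mathcal{S}}$ with $g_0$, and $\ell_{\scriptscriptstyle\mathcal{S}} \cap \mathcal{S}$ would fail to be a singleton. The compactness step itself is a routine translation of expansiveness into a finitary statement and should present no difficulty.
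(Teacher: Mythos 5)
Your argument is correct and is exactly the compactness argument the paper has in mind (the paper omits the proof, calling the lemma an immediate consequence of compactness of the subshift): a finite subset of $\mathcal{H}(\ell)$ determines the symbol at a chosen $g_0 \in \ell^{(-)} \cap \bb{Z}^2$, and taking the convex hull of that set together with $g_0$ yields the desired $\mathcal{S}$ with $\ell_{\scriptscriptstyle\mathcal{S}} \cap \mathcal{S} = \{g_0\}$. Your geometric bookkeeping (placing $g_0$ on $\ell^{(-)}$ rather than on $\ell$, and checking $L(\mathcal{S},\eta)$ agrees with restrictions of points of $X_\eta$) is precisely the right care to take.
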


For $\ell \in \bb{G}_1$, we call \emph{$\ell$-strip} any lattice translation of $\ell^t = \{g \in \bb{Z}^2 : \dist(g,\ell) \leq t\}$, where $t > 0$.

For $\eta \in A^{\bb{Z}^d}$ and $\mathcal{U} \subset \bb{Z}^d$ nonempty, we say that $\eta|\sob{\mathcal{U}} \in A^{\mathcal{U}}$ is \emph{periodic} of period $h \in (\bb{Z}^d)^*$ if $\eta_{g+h} = \eta_{g}$ for every $g \in \mathcal{U} \cap (\mathcal{U}-h)$. Clearly, this notion of periodicity extend the classical one. Given $\ell \in \bb{G}_1$, to indicate that a period $h$ belongs to $\ell \cap \bb{Z}^2$, we say that $\eta|\sob{\mathcal{U}}$ is \emph{$\ell$-periodic}.

For $x \in X_{\eta}$, if $\mathcal{S}$ is a subset of $\bb{R}^2$, we make a slight abuse of the notation by denoting $x|\sob{\mathcal{S}}$ instead of $x|\sob{\mathcal{S} \cap \bb{Z}^2}$.

\begin{proposition}\label{pps_par_antipar_exp}
If $\eta \in A^{\bb{Z}^2}$ is periodic, then $\ell \in \bb{G}_1$ is a nonexpansive line on $X_{\eta}$ if, and only if, $\ell$ and $\lle$ are both one-sided nonexpansive directions on $X_{\eta}$.
\end{proposition}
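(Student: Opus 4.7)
The direction $(\Leftarrow)$ is immediate from the characterization noted earlier in the text: $\ell \in \bb{G}_1$ is expansive on $X_{\eta}$ if and only if both $\ell$ and $\lle$ are one-sided expansive; its contrapositive contains the statement ``both $\ell$ and $\lle$ one-sided nonexpansive $\Rightarrow$ $\ell$ nonexpansive''.

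For $(\Rightarrow)$, assume $\ell$ is nonexpansive on $X_{\eta}$. The plan is first to eliminate the doubly periodic case: were $\eta$ doubly periodic, then applying Lemma~\ref{pps_perio_expan} to two linearly independent periods would force every subspace of $\bb{G}_1$ to be expansive on $X_{\eta}$, contradicting the hypothesis. Hence $\eta$ admits a primitive period $h \in (\bb{Z}^2)^*$, unique up to sign, and Lemma~\ref{pps_perio_expan} applied to $h$ forces $h \in \ell \cap \bb{Z}^2$. Then I would choose $v \in \bb{Z}^2$ so that $\{h, v\}$ is a $\bb{Z}$-basis of $\bb{Z}^2$ with $v \in \mathcal{H}(\ell) \setminus \ell$. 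Since every $z \in X_{\eta}$ inherits the period $h$, the map $\pi : X_{\eta} \to A^{\bb{Z}}$ defined by $\pi(z)_n := z(nv)$ is a well-defined continuous injection that intertwines the shift $T^{v}$ on $X_{\eta}$ with the standard shift on the compact subshift $Y := \pi(X_{\eta}) \subset A^{\bb{Z}}$. Moreover, using $h$-periodicity and the fact that $\mathcal{H}(\ell)$, $\mathcal{H}(\lle)$ are $\langle h\rangle$-invariant, the restrictions $z|\sob{\mathcal{H}(\ell)}$ and $z|\sob{\mathcal{H}(\lle)}$ correspond under $\pi$ to $\pi(z)|\sob{\{n\ge 0\}}$ and $\pi(z)|\sob{\{n\le 0\}}$, respectively. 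Since $\eta$ is not doubly periodic, $\pi(\eta)$ is aperiodic and $Y$ is an infinite subshift of $A^{\bb{Z}}$.

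The final step is a classical one-dimensional complexity argument in $Y$. If $P_{Y}(n+1) = P_{Y}(n)$ held for some $n$, then each length-$n$ word in $Y$ would have a unique right-extension, forcing $Y$ to be finite; hence for every $n$ there is a length-$n$ word in $Y$ with at least two distinct right-extensions, and by the symmetric count on left-extensions there is also one with at least two distinct left-extensions. A K\"onig-type diagonal argument---placing such a word on positions $[-n+1,0]$ with the two distinct right-extensions at position $1$ (respectively on $[0,n-1]$ with the two left-extensions at position $-1$), then passing to a subsequence to stabilize the pair of extending letters (using $|A|<\infty$) and taking pointwise limits in the compact $Y$---produces pairs $\tilde{x} \neq \tilde{y}$ in $Y$ with $\tilde{x}|\sob{\{n\le 0\}} = \tilde{y}|\sob{\{n\le 0\}}$ and $\tilde{x}' \neq \tilde{y}'$ in $Y$ with $\tilde{x}'|\sob{\{n\ge 0\}} = \tilde{y}'|\sob{\{n\ge 0\}}$. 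Pulling these back via $\pi^{-1}$ yields witnesses in $X_{\eta}$ of the one-sided nonexpansiveness of $\lle$ and of $\ell$, respectively. The main delicate point is precisely this compactness step: one must ensure that the two disagreeing extensions survive the pointwise limit in $Y$, which is handled routinely by the subsequence stabilization of the extending letters.
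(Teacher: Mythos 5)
Your proposal is correct, but it follows a genuinely different route from the paper. The paper argues by contradiction: assuming (say) $\lle$ were one-sided expansive, it invokes Lemma~\ref{lem_dir_exp} to get a finite set $\mathcal{S}$ whose extreme point in the $\lle$-direction is $\eta$-generated, uses Lemma~\ref{pps_perio_expan} to place all periods of $\eta$ on $\ell$ (which makes the restriction to an $\ell$-strip determined by a finite window), applies the pigeonhole principle along the orthogonal direction, and propagates the resulting coincidence across the half plane via the generated point; this forces $\eta$ to be doubly periodic, contradicting the nonexpansiveness of $\ell$ through Lemma~\ref{pps_perio_expan} again. You instead use Lemma~\ref{pps_perio_expan} only to rule out double periodicity and to conclude $h\in\ell$, then quotient by the period: the coding $\pi(z)_n=z_{nv}$, with $\{h,v\}$ a $\bb{Z}$-basis and $v\in\mathcal{H}(\ell)\setminus\ell$, is an injective conjugacy onto an infinite one-dimensional subshift $Y\subset A^{\bb{Z}}$ under which $\mathcal{H}(\ell)$ and $\mathcal{H}(\lle)$ become the half-lines $\{n\ge 0\}$ and $\{n\le 0\}$; the classical counting fact $P_Y(n+1)>P_Y(n)$ for infinite $Y$ plus the compactness/stabilization argument then produces distinct points of $Y$ agreeing on each half-line, which pull back to witnesses of one-sided nonexpansiveness of both $\ell$ and $\lle$. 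Your argument is direct rather than by contradiction, avoids generating sets and Lemma~\ref{lem_dir_exp} entirely, and in fact proves the slightly stronger statement that for any singly periodic configuration both one-sided directions along the period line are nonexpansive; the paper's version has the advantage of exercising the generating-set propagation technique that is reused throughout the rest of the article. The delicate points you flag (stabilizing the two extending letters before passing to the pointwise limit, injectivity of $\pi$ so the limits pull back to distinct configurations) are exactly the right ones and are handled correctly.
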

\begin{proof}
If $\ell \in \bb{G}_1$ is a nonexpansive line on $X_{\eta}$, then $\ell$ or $\lle$ is a one-sided nonexpansive direction on $X_{\eta}$. Suppose, by contradiction, that $\lle$ is a one-sided expansive direction on $X_{\eta}$. Thanks Lemma \ref{lem_dir_exp} there is a set $\mathcal{S} \in \mathcal{F}_{C}^{V\!ol}$ such that $\lle_{\scriptscriptstyle\mathcal{S}} \cap \mathcal{S} = \{g_0\}$ is $\eta$-ge\-nerated by $\mathcal{S}$. Let $F \subset \bb{Z}^2$ be an $\ell$-strip that contains a translation of $\mathcal{S}$ and consider a convex finite set $B \subset F$ such that, for any $g,g' \in \bb{Z}^2$, $(T^{g}\eta)|\sob{B} = (T^{g'}\eta)|\sob{B}$ implies $(T^{g}\eta)|\sob{F} = (T^{g'}\eta)|\sob{F}$. Such a subset exists because according to Lemma \ref{pps_perio_expan} the line $\ell$ contains all periods of $\eta$. Let $\hat{\ell} \in \bb{G}_1$ be the orthogonal line to $\ell$ and consider a non-null vector $v \in \hat{\ell} \cap \mathcal{H}(\ell)$. For any $\tau \in \bb{Z}$ there exist $\tau \leq t < t'  \leq \tau+P_{\eta}(B)$ with $(T^{tv}\eta)|\sob{B} = (T^{t'\!v}\eta)|\sob{B}$ and hence with $(T^{tv}\eta)|\sob{F} = (T^{t'\!v}\eta)|\sob{F}$. Since $g_0 \in \mathcal{S} \subset F$ is $\eta$-generated by $\mathcal{S}$, it is easy to argue by induction that the restriction of $\eta$ to the half plane $\mathcal{H}(\ell)$ is periodic of period $(t'-t)v$, where $t'-t \leq P_{\eta}(B)$. As $\tau$ is arbitrary, we conclude that $\eta$ is $\hat{\ell}$-periodic, but this contradicts Lemma \ref{pps_perio_expan}.
\end{proof}

We introduce a notion motivated by the existence of sets that are not necessary $\eta$-generating, but, with respect to a fixed direction, part of their vertices are $\eta$-gene- rated.

\begin{definition}\label{def_eta_gerador}
Given an oriented line $\ell \subset \bb{R}^2$, a set $\mathcal{U} \in \mathcal{F}_{C}$ is said to be an $(\eta,\ell)$-ge\-nerating set if each point of $\ell_{\scriptscriptstyle\mathcal{U}} \cap V(\mathcal{U})$ is $\eta$-generated by $\mathcal{U}$.
\end{definition}

Of course, every $\eta$-generating set $\mathcal{U} \in \mathcal{F}_{C}$ is, in particular, an $(\eta,\ell)$-generating set for every oriented line $\ell \subset \bb{R}^2$.

\begin{remark}\label{rem_exis_conj}
Given $\eta \in A^{\bb{Z}^2}$, suppose $P_{\eta}(\mathcal{U}) \leq |\mathcal{U}|+|A|-2$ for some set $\mathcal{U} \in \mathcal{F}_{C}$ and let $\ell \in \bb{G}_1$ be a rational oriented line. Set $\mathcal{S}_1 := \mathcal{U}$ and define $\mathcal{S}_{i+1} := \mathcal{S}_{i} \backslash \ell_{\scriptscriptstyle\mathcal{S}_i}$ for all $i \geq 1$. Let $I \geq 1$ be the greatest integer such that $P_{\eta}(\mathcal{S}_I) \leq |\mathcal{S}_I|+|A|-2$. Note that any convex set $\mathcal{S} \subset \mathcal{S}_I$ that is minimal among all convex sets $\mathcal{T} \subset \mathcal{S}_I$  that satisfy $\mathcal{S}_{I} \backslash \ell_{\scriptscriptstyle\mathcal{S}_I} \subset \mathcal{T}$ and $P_{\eta}(\mathcal{T}) \leq$ $|\mathcal{T}|+|A|-2$ is $(\eta,\ell)$-generating. Again, $1+|A|-2 < |A|$ implies that $\mathcal{S}$ has at least two points. Moreover, if $\mathcal{S} \backslash \ell_{\scriptscriptstyle\mathcal{S}}$ is not empty, then
\begin{enumerate}
	\item[(i)] $P_{\eta}(\mathcal{S}) - P_{\eta}(\mathcal{S} \backslash \ell_{\scriptscriptstyle\mathcal{S}}) \leq |\ell_{\scriptscriptstyle\mathcal{S}} \cap \mathcal{S}|-1$,
	\item[(ii)] there is a half plane $\mathcal{H} \subset \bb{Z}^2$ (whose edge is parallel to $\ell$) such that $\mathcal{S} \backslash \ell_{\scriptscriptstyle\mathcal{S}} =$ $\mathcal{U} \cap \mathcal{H}$.
\end{enumerate}
\end{remark}

Note that, if we also suppose in the previous remark that $\ell \in \bb{G}_1$ is a one-sided nonexpansive direction on $X_{\eta}$ and $\mathcal{S} \in \mathcal{F}^{V\!ol}_{C}$, then Remark \ref{Obs_irracional_line} provides the addi- tional property $|\ell_{\scriptscriptstyle\mathcal{S}} \cap \mathcal{S}| \geq 2$.

\section{Periodicity and balanced sets}
\label{sec4}

Balanced sets are in particular generating sets for which additional hypotheses are imposed on their geometry and on the bounds of their complexity. Such properties fit well in the context of the Alphabetical Morse-Hedlund Theorem. 


For a set $\mathcal{U} \in \mathcal{F}^{V\!ol}_{C}$, an oriented line $\ell \in \bb{G}_1$ and $\gamma \in L(\mathcal{U} \backslash \ell_{\scriptscriptstyle\mathcal{U}},\eta)$, we define
\begin{equation}\label{eq_dfn_cont}
N_{\mathcal{U}}(\ell,\gamma) := \left|\{\gamma' \in L(\mathcal{U},\eta) : \gamma'|\sob{\mathcal{U} \backslash \ell_{\scriptscriptstyle\mathcal{U}}} = \gamma\}\right|.
\end{equation} 
In particular, $N_{\mathcal{U}}(\ell,\gamma) = 1$ means that 
$\gamma'|\sob{\mathcal{U} \backslash \ell_{\scriptscriptstyle\mathcal{U}}} = \gamma = \gamma''|\sob{\mathcal{U} \backslash \ell_{\scriptscriptstyle\mathcal{U}}}$ implies $\gamma' = \gamma''$ for any $\mathcal{U}$-configurations $\gamma',\gamma'' \in L(\mathcal{U},\eta)$. If $\ell \in \bb{G}_1$ is a rational oriented line, for a configuration $x \in X_{\eta}$, let $$L^{\ell}(\mathcal{U},x) := \left\{(T^{t\vec{v}_{\ell}}x)|\sob{\mathcal{U}} \ : t \in \bb{Z}\right\}.$$

\begin{lemma}\label{lem_exist_x}
For $\eta \in A^{\bb{Z}^2}$\! and a rational oriented line $\ell \in \bb{G}_1$, let $\mathcal{U} \in$ $\mathcal{F}^{V\!ol}_{C}$ be an $(\eta,\ell)$-generating set. If $x \in X_{\eta}$ and there is $\gamma \in L^{\ell}(\mathcal{U} \backslash \ell_{\scriptscriptstyle\mathcal{U}},x)$ such that $N_{\mathcal{U}}(\ell,\gamma) = 1$, then, for every $\ell$-strip $F \supset \mathcal{U} \backslash \ell_{\scriptscriptstyle\mathcal{U}}$ and any configuration $y \in X_{\eta}$, $x|\sob{F} =$ $y|\sob{F}$ implies that $x|\sob{\ell_{\scriptscriptstyle\mathcal{U}} \cup F} = y|\sob{\ell_{\scriptscriptstyle\mathcal{U}} \cup F}$.
\end{lemma}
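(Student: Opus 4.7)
The plan is first to reduce to the case $t_{0}=0$, so that $\gamma = x|\sob{\mathcal{U} \backslash \ell_{\scriptscriptstyle\mathcal{U}}}$. Since $\vec{v}_{\ell}$ is parallel to $\ell$, every $\ell$-strip is invariant under translation by $\vec{v}_{\ell}$; replacing $(x,y)$ by $(T^{t_{0}\vec{v}_{\ell}}x, T^{t_{0}\vec{v}_{\ell}}y)$ preserves $F$, its inclusion of $\mathcal{U} \backslash \ell_{\scriptscriptstyle\mathcal{U}}$, and the target line $\ell_{\scriptscriptstyle\mathcal{U}}$. Now $F \supset \mathcal{U} \backslash \ell_{\scriptscriptstyle\mathcal{U}}$ together with $x|\sob{F} = y|\sob{F}$ gives $y|\sob{\mathcal{U} \backslash \ell_{\scriptscriptstyle\mathcal{U}}} = \gamma$, so $x|\sob{\mathcal{U}}$ and $y|\sob{\mathcal{U}}$ are two elements of $L(\mathcal{U},\eta)$ extending $\gamma$; the hypothesis $N_{\mathcal{U}}(\ell,\gamma)=1$ then forces $x|\sob{\mathcal{U}} = y|\sob{\mathcal{U}}$, and in particular $x_{g} = y_{g}$ for every $g \in \ell_{\scriptscriptstyle\mathcal{U}} \cap \mathcal{U}$.

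To propagate the agreement to the whole lattice line $\ell_{\scriptscriptstyle\mathcal{U}} \cap \bb{Z}^{2}$, I would parametrize it as $\{g_{0}+m\vec{v}_{\ell} : m \in \bb{Z}\}$, with $g_{0}$ chosen to be the rightmost (along $\vec{v}_{\ell}$) top vertex of $\mathcal{U}$, so that $\ell_{\scriptscriptstyle\mathcal{U}} \cap \mathcal{U} = \{g_{0}+m\vec{v}_{\ell} : -(k-1) \leq m \leq 0\}$, where $k := |\ell_{\scriptscriptstyle\mathcal{U}} \cap \mathcal{U}|$. I then argue by induction on $t \geq 0$: assuming $x_{g_{0}+m\vec{v}_{\ell}} = y_{g_{0}+m\vec{v}_{\ell}}$ for all $m \leq t$, consider the translate $\mathcal{U}':=\mathcal{U}+(t+1)\vec{v}_{\ell}$. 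By shift invariance of $\eta$-complexities it is again $(\eta,\ell)$-generating; its top is $\{g_{0}+m\vec{v}_{\ell} : t+2-k \leq m \leq t+1\}$, with rightmost vertex $g_{0}+(t+1)\vec{v}_{\ell} \in \ell_{\scriptscriptstyle\mathcal{U}} \cap V(\mathcal{U}')$. Since $F$ is $\vec{v}_{\ell}$-invariant and contains $\mathcal{U} \backslash \ell_{\scriptscriptstyle\mathcal{U}}$, it contains $\mathcal{U}' \backslash \ell_{\scriptscriptstyle\mathcal{U}}$; combined with the inductive hypothesis, $x$ and $y$ coincide on $\mathcal{U}' \backslash \{g_{0}+(t+1)\vec{v}_{\ell}\}$, and the $\eta$-generating property at this vertex of $\mathcal{U}'$ delivers equality at $g_{0}+(t+1)\vec{v}_{\ell}$. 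A symmetric induction using the translates $\mathcal{U}-t\vec{v}_{\ell}$ and their leftmost top vertex carries the agreement in the opposite direction, which together with $x|\sob{F} = y|\sob{F}$ yields $x|\sob{\ell_{\scriptscriptstyle\mathcal{U}} \cup F} = y|\sob{\ell_{\scriptscriptstyle\mathcal{U}} \cup F}$.

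The main subtlety lies in the case where $\ell_{\scriptscriptstyle\mathcal{U}} \cap \mathcal{U}$ is not reduced to a single vertex but is a lattice segment of length $k\geq 2$: the $(\eta,\ell)$-generating property only controls the top vertices of $\mathcal{U}$ one at a time and so cannot by itself determine the entire top row from $\mathcal{U} \backslash \ell_{\scriptscriptstyle\mathcal{U}}$. This is precisely the role of the stronger hypothesis $N_{\mathcal{U}}(\ell,\gamma)=1$, which is used only as the base step; once that base step is granted, each induction step exposes a single new lattice point on $\ell_{\scriptscriptstyle\mathcal{U}}$ and that point is always an extremal vertex of a shifted copy of $\mathcal{U}$, so the vertex-wise generating property suffices.
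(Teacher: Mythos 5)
Your proposal is correct and follows essentially the same route as the paper: locate the translate of $\mathcal{U}$ on which the restriction equals $\gamma$, use $N_{\mathcal{U}}(\ell,\gamma)=1$ to force $x$ and $y$ to agree there, and then propagate along $\ell_{\scriptscriptstyle\mathcal{U}}$ by induction via the $\eta$-generated vertices of the $\vec{v}_{\ell}$-translates of $\mathcal{U}$. The only differences are cosmetic -- you normalize the shift to $\tau=0$ and spell out the two-sided induction (including the observation that only the extreme points of $\ell_{\scriptscriptstyle\mathcal{U}}\cap\mathcal{U}$ are controlled by the $(\eta,\ell)$-generating property) which the paper leaves implicit.
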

\begin{proof}
Since $\gamma \in L^{\ell}(\mathcal{U} \backslash \ell_{\scriptscriptstyle\mathcal{U}},x)$, there exists $\tau \in \bb{Z}$ such that $\gamma = (T^{\tau\vec{v}_{\ell}}x)|\sob{\mathcal{U} \backslash \ell_{\scriptscriptstyle\mathcal{U}}}$. So, from $x|\sob{F} = y|\sob{F}$, we obtain $(T^{\tau\vec{v}_{\ell}}x)|\sob{\mathcal{U} \backslash \ell_{\scriptscriptstyle\mathcal{U}}} = \gamma = (T^{\tau\vec{v}_{\ell}}y)|\sob{\mathcal{U} \backslash \ell_{\scriptscriptstyle\mathcal{U}}}$. As $N_{\mathcal{U}}(\ell,\gamma) = 1$, we have then $(T^{\tau\vec{v}_{\ell}}x)|\sob{\mathcal{U}} = (T^{\tau\vec{v}_{\ell}}y)|\sob{\mathcal{U}}$, which implies
\begin{equation}\label{eq_pps_exist_x}
x|\sob{(\mathcal{U}+\tau \vec{v}_{\ell}) \cup F} = y|\sob{(\mathcal{U}+\tau \vec{v}_{\ell}) \cup F}.
\end{equation}
By hypothesis, each vertices of $\mathcal{U}$ in $\ell_{\scriptscriptstyle\mathcal{U}} \cap \mathcal{U}$ are $\eta$-generated by $\mathcal{U}$. Hence, from (\ref{eq_pps_exist_x}) it follows by induction that $x|\sob{\ell_{\scriptscriptstyle\mathcal{U}} \cup F} = y|\sob{\ell_{\scriptscriptstyle\mathcal{U}} \cup F}$, which completes the proof.
\end{proof}

Clearly, if $\ell \in \bb{G}_1$ is a one-sided nonexpansive direction on $X_{\eta}$ and $\mathcal{U} \in \mathcal{F}^{V\!ol}_{C}$ is an $(\eta,\ell)$-generating set, then there are configurations $x \in X_{\eta}$ such that
\begin{equation}\label{eq_pontos_naoexpan}
N_{\mathcal{U}}(\ell,\gamma) > 1 \quad \forall \  \mathcal{U} \backslash \ell_{\scriptscriptstyle\mathcal{U}}\textrm{-configuration} \ \gamma \in L^{\ell}(\mathcal{U} \backslash \ell_{\scriptscriptstyle\mathcal{U}},x).
\end{equation}
We denote by $\mathcal{M}(\ell,\mathcal{U})$ the set formed by the configurations $x \in X_{\eta}$ that satisfy (\ref{eq_pontos_naoexpan}). The configurations that belong to $\mathcal{M}(\ell,\mathcal{U})$ are exactly the ones for which each $\mathcal{U} \backslash \ell_{\scriptscriptstyle\mathcal{U}}$-restriction admits multiple extensions to $\mathcal{U}$. It is not difficult to see that $P_{\eta}(\mathcal{U}) - P_{\eta}(\mathcal{U} \backslash \ell_{\scriptscriptstyle\mathcal{U}}) = \sum_{\gamma \in L(\mathcal{U} \backslash \ell_{\scriptscriptstyle\mathcal{U}},\eta)} (N_{\mathcal{U}}(\ell,\gamma)-1)$,
which, for each $x \in \mathcal{M}(\ell,\mathcal{U})$, yields  
\begin{equation}\label{desig_card}
P_{\eta}(\mathcal{U})-P_{\eta}(\mathcal{U} \backslash \ell_{\scriptscriptstyle\mathcal{U}}) \geq \sum_{\gamma \in L^{\ell}(\mathcal{U} \backslash \ell_{\scriptscriptstyle\mathcal{U}},x)} \big(N_{\mathcal{U}}(\ell,\gamma)-1\big) \geq \big|L^{\ell}(\mathcal{U} \backslash \ell_{\scriptscriptstyle\mathcal{U}},x)\big|.
\end{equation}

By applying Morse-Hedlund Theorem, Cyr and Kra showed that, under certain conditions, a suitable upper bound for $P_{\eta}(\mathcal{U})-P_{\eta}(\mathcal{U} \backslash \ell_{\scriptscriptstyle\mathcal{U}})$ and so for $|L^{\ell}(\mathcal{U} \backslash \ell_{\scriptscriptstyle\mathcal{U}},x)|$, imposes periodicity in some regions of specific configurations. In the sequel, we will employ a similar strategy.

Let $\ell \in \bb{G}_1$ be an oriented line. Given a set $\mathcal{U} \in \mathcal{F}^{V\!ol}_{C}$, for each oriented line $\ell' \subset \bb{R}^2$ parallel to $\ell$ such that $\ell' \cap \mathcal{U} \neq \emptyset$, we denote $i_{\mathcal{U}}(\ell')$ and $f_{\mathcal{U}}(\ell')$, respectively, the initial and the final points on $\ell' \cap \mathcal{U}$ according to the orientation of $\ell'$. If $|\ell' \cap \mathcal{U}|$ is equal to $1$, then $i_{\mathcal{U}}(\ell')$ and $f_{\mathcal{U}}(\ell')$ are the same point. For $p \in \bb{N}$, we define $$\mathcal{I}^{\ell,p}(\mathcal{U}) := \left\{i_{\mathcal{U}}(\ell') \in \bb{Z}^2 : \ell' \ \textrm{is parallel to} \ \ell, \ \ell' \neq \ell_{\scriptscriptstyle\mathcal{U}}, \ |\ell' \cap \mathcal{U}| \geq p\right\}$$ and $$\mathcal{F}^{\ell,p}(\mathcal{U}) := \left\{f_{\mathcal{U}}(\ell') \in \bb{Z}^2 : \ell' \ \textrm{is parallel to} \ \ell, \ \ell' \neq \ell_{\scriptscriptstyle\mathcal{U}}, \ |\ell' \cap \mathcal{U}| \geq p\right\}.$$ We call \emph{$(\ell,\mathcal{U},p)$-strip} the set $\bigcup_{t \in \bb{Z}} (\mathcal{I}^{\ell,p}(\mathcal{U})+t\vec{v}_{\ell})$. For $x \in X_{\eta}$, we denote by $$A^{\ell,p}(\mathcal{U},x) := \Big\{(T^{t\vec{v}_{\ell}}x)|\sob{\mathcal{I}^{\ell,p}(\mathcal{U})} : t \in \bb{Z}\Big\}$$ the finite alphabet induced by the sequence $\xi_{t} := (T^{t\vec{v}_{\ell}}x)|\sob{\mathcal{I}^{\ell,p}(\mathcal{U})}$, where $t \in \bb{Z}$. This sequence is closely related to the restriction of the configuration $x$ to the $(\ell,\mathcal{U},p)$-strip and this viewpoint will be essential in several arguments.

Similarly to Lemma 2.24 of \cite{van}, the following result shows how nonexpansiveness is connected to periodicity in certain regions of some configurations.

\begin{lemma}\label{lem_exist_faixa_period}
Given $\eta \in A^{\bb{Z}^2}$, suppose $\ell \in \bb{G}_1$ is a one-sided nonexpansive direction on $X_{\eta}$ and $\mathcal{U} \in$  $\mathcal{F}^{V\!ol}_{C}$ is an $(\eta,\ell)$-generating set. If $x \in \mathcal{M}(\ell,\mathcal{U})$ and there is $p \in \bb{N}$ such that $P_{\eta}(\mathcal{U}) - P_{\eta}(\mathcal{U} \backslash \ell_{\scriptscriptstyle\mathcal{U}}) \leq$ $p+|A^{\ell,p}(\mathcal{U},x)|-2$, then the restriction of $x$ to the $(\ell,\mathcal{U},p)$-strip is periodic of period $t\vec{v}_{\ell}$ for some $t \leq$ $p+|A^{\ell,p}(\mathcal{U},x)|-2$.
\end{lemma}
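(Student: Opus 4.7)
The plan is to reduce the statement to a one-dimensional periodicity claim and then invoke the Alphabetical Morse--Hedlund Theorem. The natural object is the bi-infinite sequence $\xi = (\xi_{t})_{t \in \bb{Z}}$ over the alphabet $\mathcal{A} := A^{\ell,p}(\mathcal{U},x)$ defined by $\xi_{t} := (T^{t\vec{v}_{\ell}}x)|\sob{\mathcal{I}^{\ell,p}(\mathcal{U})}$, which encodes exactly the restriction of $x$ to the $(\ell,\mathcal{U},p)$-strip. Since $\mathcal{A}$ is by definition the set of letters actually appearing in $\xi$, Theorem \ref{AMHT}(ii) will be applicable as soon as the complexity bound $P_{\xi}(p) \leq p + |\mathcal{A}| - 2$ is secured; the period $t_{0} \leq p + |\mathcal{A}| - 2$ of $\xi$ it provides will, after unwinding the definition of $\xi$, translate directly into the asserted $\ell$-periodicity $x_{g+t_{0}\vec{v}_{\ell}} = x_{g}$ on the $(\ell,\mathcal{U},p)$-strip.

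The central step is therefore to secure $P_{\xi}(p) \leq p + |\mathcal{A}| - 2$, and the key geometric observation would be the inclusion $\bigcup_{s=0}^{p-1}(\mathcal{I}^{\ell,p}(\mathcal{U}) + s\vec{v}_{\ell}) \subset \mathcal{U} \setminus \ell_{\scriptscriptstyle\mathcal{U}}$. Indeed, each point of $\mathcal{I}^{\ell,p}(\mathcal{U})$ is the initial point (in the orientation of $\ell$) of a line $\ell' \neq \ell_{\scriptscriptstyle\mathcal{U}}$ parallel to $\ell$ with $|\ell' \cap \mathcal{U}| \geq p$, and since $\vec{v}_{\ell}$ is the shortest lattice vector parallel to $\ell$, consecutive lattice points on $\ell'$ are spaced by $\vec{v}_{\ell}$, so translating by $s\vec{v}_{\ell}$ for $0 \leq s \leq p-1$ remains inside the segment $\ell' \cap \mathcal{U}$. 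This inclusion forces the $p$-block $\xi_{t}\xi_{t+1}\cdots\xi_{t+p-1}$ to be fully determined by $(T^{t\vec{v}_{\ell}}x)|\sob{\mathcal{U} \setminus \ell_{\scriptscriptstyle\mathcal{U}}}$, yielding $P_{\xi}(p) \leq |L^{\ell}(\mathcal{U} \setminus \ell_{\scriptscriptstyle\mathcal{U}},x)|$. Combining this with inequality (\ref{desig_card}), valid because $x \in \mathcal{M}(\ell,\mathcal{U})$ and $\mathcal{U}$ is $(\eta,\ell)$-generating, and with the hypothesis $P_{\eta}(\mathcal{U}) - P_{\eta}(\mathcal{U} \setminus \ell_{\scriptscriptstyle\mathcal{U}}) \leq p + |\mathcal{A}| - 2$, delivers the desired complexity bound.

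The main obstacle I anticipate is checking the geometric inclusion above with care: the threshold $p$ in the definition of $\mathcal{I}^{\ell,p}(\mathcal{U})$ must be exactly calibrated so that $p-1$ successive steps along $\vec{v}_{\ell}$ from an initial point remain inside $\ell' \cap \mathcal{U}$, which crucially uses the minimality of $\vec{v}_{\ell}$ to ensure that no intermediate lattice points on $\ell'$ are skipped. Beyond this calibration and the mild observation that each translate $\mathcal{I}^{\ell,p}(\mathcal{U}) + s\vec{v}_{\ell}$ avoids $\ell_{\scriptscriptstyle\mathcal{U}}$ because $\ell' \neq \ell_{\scriptscriptstyle\mathcal{U}}$, the one-sided nonexpansiveness of $\ell$ and the $(\eta,\ell)$-generating property of $\mathcal{U}$ enter only through (\ref{desig_card}) and the membership $x \in \mathcal{M}(\ell,\mathcal{U})$, so the argument reduces cleanly to a single application of Theorem \ref{AMHT}(ii).
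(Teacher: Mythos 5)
Your proposal is correct and follows essentially the same route as the paper: both define the sequence $\xi_t = (T^{t\vec{v}_{\ell}}x)|\sob{\mathcal{I}^{\ell,p}(\mathcal{U})}$ over the induced alphabet, use the inclusion $\bigcup_{s=0}^{p-1}(\mathcal{I}^{\ell,p}(\mathcal{U})+s\vec{v}_{\ell}) \subset \mathcal{U}\backslash\ell_{\scriptscriptstyle\mathcal{U}}$ (valid by convexity of $\mathcal{U}$, minimality of $\vec{v}_{\ell}$ and the threshold $p$ in the definition of $\mathcal{I}^{\ell,p}(\mathcal{U})$) to get $P_{\xi}(p) \leq |L^{\ell}(\mathcal{U}\backslash\ell_{\scriptscriptstyle\mathcal{U}},x)|$, combine this with (\ref{desig_card}) and the hypothesis, and conclude via Theorem \ref{AMHT}(ii). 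The only cosmetic difference is that the paper disposes of the trivial case $|A^{\ell,p}(\mathcal{U},x)| = 1$ separately, which your argument covers implicitly since the induced alphabet consists exactly of the letters appearing in $\xi$.
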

\begin{proof}
Note that, from (\ref{desig_card}) and by hypothesis, it follows that  $1 \leq |L^{\ell}(\mathcal{U} \backslash \ell_{\scriptscriptstyle\mathcal{U}},x)| \leq p+|A^{\ell,p}(\mathcal{U},x)|-2$.
 Set $\mathcal{R} := \bigcup_{t=0}^{p-1} (\mathcal{I}^{\ell,p}(\mathcal{U})+t\vec{v}_{\ell}) \subset \mathcal{U}$ and let $\xi = (\xi_t)_{t \in \bb{Z}}$ be the sequence defined by $\xi_{t} = (T^{t\vec{v}_{\ell}}x)|\sob{\mathcal{I}^{\ell,p}(\mathcal{U})}$ for all $t \in \bb{Z}$. Note that, for every $\tau \in \bb{Z}$, the word $\xi_{\tau}\xi_{{\tau}+1} \cdots \xi_{{\tau}+p-1}$ is naturally identified with the $\mathcal{R}$-configuration $(T^{{\tau}\vec{v}_{\ell}}x)|\sob{\mathcal{R}} \in$ $L^{\ell}(\mathcal{R},x)$. If $|A^{\ell,p}(\mathcal{U},x)| = 1$, then there is nothing to argue. Otherwise, since $$P_{\xi}(p) = |L^{\ell}(\mathcal{R},x)| \leq |L^{\ell}(\mathcal{U} \backslash \ell_{\scriptscriptstyle\mathcal{U}},x)| \leq p+|A^{\ell,p}(\mathcal{U},x)|-2,$$ the Alphabetical Morse-Hedlund Theorem ensures that the sequence $\xi$ is periodic of period at most $p+|A^{\ell,p}(\mathcal{U},x)|-2$. Thus, the restriction of $x$ to the $(\ell,\mathcal{U},p)$-strip is periodic of period $t\vec{v}_{\ell}$ for some $t \leq p+|A^{\ell,p}(\mathcal{U},x)|-2$.
\end{proof}

We will need a version of the above lemma for half-strips. So let $\ell \in \bb{G}_1$ be a rational oriented line. If $\mathcal{U} \in \mathcal{F}^{V\!ol}_{C}$ and $a \in \bb{Z}$, for each $x \in X_{\eta}$, we define $$L^{\ell}_{a+}(\mathcal{U},x) := \left\{(T^{t\vec{v}_{\ell}}x)|\sob{\mathcal{U}} \ : t \geq a\right\} \quad \textrm{and} \quad L^{\ell}_{a-}(\mathcal{U},x) := \left\{(T^{-t\vec{v}_{\ell}}x)|\sob{\mathcal{U}} \ : t \geq a\right\}.$$ Recall that $-\vec{v}_{\ell}$ is parallel to $\lle \in \bb{G}_1$. Naturally, we are led to consider con\-fi\-gu\-rations $x \in X_{\eta}$ such that
\begin{equation}\label{eq_pontos_naoexpan_sfaixa_v1}
N_{\mathcal{U}}(\ell,\gamma) > 1 \quad \forall \ \mathcal{U} \backslash \ell_{\scriptscriptstyle\mathcal{U}}\textrm{-configuration} \ \gamma \in L^{\ell}_{a+}(\mathcal{U} \backslash \ell_{\scriptscriptstyle\mathcal{U}},x) 
\end{equation}
or
\begin{equation}\label{eq_pontos_naoexpan_sfaixa_v2}
N_{\mathcal{U}}(\ell,\gamma) > 1 \quad \forall \ \mathcal{U} \backslash \ell_{\scriptscriptstyle\mathcal{U}}\textrm{-configuration} \ \gamma \in L^{\ell}_{a-}(\mathcal{U} \backslash \ell_{\scriptscriptstyle\mathcal{U}},x). 
\end{equation}
We denote by $\mathcal{M}_{a+}(\ell,\mathcal{U})$ and $\mathcal{M}_{a-}(\ell,\mathcal{U})$ the sets formed by the configurations $x \in X_{\eta}$ that satisfy,  respectively, (\ref{eq_pontos_naoexpan_sfaixa_v1}) and (\ref{eq_pontos_naoexpan_sfaixa_v2}). Note that $\mathcal{M}(\ell,\mathcal{U}) \subset \mathcal{M}_{a+}(\ell,\mathcal{U}) \cap \mathcal{M}_{a-}(\ell,\mathcal{U})$. It is clear that analogous inequalities as (\ref{desig_card}) also hold in this context, i.e., for $x \in \mathcal{M}_{a\pm}(\ell,\mathcal{U})$, one has
\begin{equation}\label{eq_card_a}
P_{\eta}(\mathcal{U})-P_{\eta}(\mathcal{U} \backslash \ell_{\scriptscriptstyle\mathcal{U}}) \geq \big|L^{\ell}_{a\pm}(\mathcal{U} \backslash \ell_{\scriptscriptstyle\mathcal{U}},x)\big|.
\end{equation}
Given $p \in \bb{N}$, for each integer $a \in \bb{Z}$, we call \emph{$(\ell,\mathcal{U},p)$-half-strips} the sets defined by
\begin{equation}\label{def_semi_faixa}
F^{+}(a) := \bigcup_{t \geq a} (\mathcal{I}^{\ell,p}(\mathcal{U})+t\vec{v}_{\ell}) \ \ \textrm{and}  \ \ F^{-}(a) := \bigcup_{t \geq a} (\mathcal{F}^{\ell,p}(\mathcal{U})-t\vec{v}_{\ell}).
\end{equation} 
For $x \in X_{\eta}$, let $$A^{\ell,p}_{a+}(\mathcal{U},x) := \Big\{(T^{t\vec{v}_{\ell}}x)|\sob{\mathcal{I}^{\ell,p}(\mathcal{U})} : t \geq a\Big\}$$ and $$A_{a-}^{\ell,p}(\mathcal{U},x) := \Big\{(T^{-t\vec{v}_{\ell}}x)|\sob{\mathcal{F}^{\ell,p}(\mathcal{U})} : t \geq a\Big\}$$ denote the induced finite alphabets.

\begin{lemma}\label{lem_exist_sfaixa_period}
Given $\eta \in A^{\bb{Z}^2}$, suppose $\ell \in \bb{G}_1$ is a one-sided nonexpansive direction on $X_{\eta}$ and that $\mathcal{U} \in \mathcal{F}^{V\!ol}_{C}$ is an $(\eta,\ell)$-generator set. If $x \in \mathcal{M}_{a\pm}(\ell,\mathcal{U})$ and there exists $p \in \bb{N}$ such that $P_{\eta}(\mathcal{U}) - P_{\eta}(\mathcal{U} \backslash \ell_{\scriptscriptstyle\mathcal{U}}) \leq p+|A^{\ell,p}_{a\pm}(\mathcal{U},x)|-2$, then the restriction of $x$ to the $(\ell,\mathcal{U},p)$-half-strip $F^{\pm}(a+m_x)$ is periodic of pe\-ri\-od $\pm t\vec{v}_{\ell}$ for some $t \leq m_x :=$ $p+|A^{\ell,p}_{a\pm}(\mathcal{U},x)|-2$.
\end{lemma}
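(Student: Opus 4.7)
The plan is to mirror the proof of Lemma~\ref{lem_exist_faixa_period}, replacing the two-sided application of the Alphabetical Morse--Hedlund Theorem with its one-sided counterpart, Theorem~\ref{AMHT}(i). I treat the ``$+$'' case in detail; the ``$-$'' case is entirely symmetric after exchanging $\vec{v}_{\ell}$ with $-\vec{v}_{\ell}$ and $\mathcal{I}^{\ell,p}(\mathcal{U})$ with $\mathcal{F}^{\ell,p}(\mathcal{U})$.

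First I would combine the hypothesis with inequality (\ref{eq_card_a}) to obtain
\[
\big|L^{\ell}_{a+}(\mathcal{U}\setminus \ell_{\scriptscriptstyle\mathcal{U}},x)\big| \leq P_{\eta}(\mathcal{U})-P_{\eta}(\mathcal{U}\setminus \ell_{\scriptscriptstyle\mathcal{U}}) \leq m_x,
\]
where $m_x := p+|A^{\ell,p}_{a+}(\mathcal{U},x)|-2$. If $|A^{\ell,p}_{a+}(\mathcal{U},x)|=1$, the induced sequence is constant and the conclusion is immediate. So I may assume $|A^{\ell,p}_{a+}(\mathcal{U},x)|\geq 2$, set $U := \{a,a+1,\ldots\}$, and define the one-sided sequence $\xi=(\xi_t)_{t\in U}$ over $A^{\ell,p}_{a+}(\mathcal{U},x)$ by $\xi_t := (T^{t\vec{v}_{\ell}}x)|\sob{\mathcal{I}^{\ell,p}(\mathcal{U})}$. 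By the very definition of $A^{\ell,p}_{a+}(\mathcal{U},x)$, every letter of this finite alphabet occurs in $\xi$, which is exactly the alphabetical hypothesis required by Theorem~\ref{AMHT}(i).

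As in the proof of Lemma~\ref{lem_exist_faixa_period}, I set $\mathcal{R} := \bigcup_{t=0}^{p-1}(\mathcal{I}^{\ell,p}(\mathcal{U})+t\vec{v}_{\ell})$. Every line $\ell'$ contributing to $\mathcal{I}^{\ell,p}(\mathcal{U})$ is by construction distinct from $\ell_{\scriptscriptstyle\mathcal{U}}$ and meets $\mathcal{U}$ in at least $p$ consecutive lattice points starting from $i_{\mathcal{U}}(\ell')$ (by convexity of $\mathcal{U}$), so $\mathcal{R}\subseteq \mathcal{U}\setminus \ell_{\scriptscriptstyle\mathcal{U}}$. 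Each length-$p$ word $\xi_\tau\cdots\xi_{\tau+p-1}$ is naturally identified with $(T^{\tau\vec{v}_{\ell}}x)|\sob{\mathcal{R}}$, yielding
\[
P_\xi(p) \;=\; \big|L^{\ell}_{a+}(\mathcal{R},x)\big| \;\leq\; \big|L^{\ell}_{a+}(\mathcal{U}\setminus \ell_{\scriptscriptstyle\mathcal{U}},x)\big| \;\leq\; m_x.
\]
Now Theorem~\ref{AMHT}(i) with $n_0 = p$ implies that $(\xi_t)_{t\in U+m_x}$ is periodic of some period $t\leq m_x$, and unpacking the shift identity shows that $x|\sob{F^{+}(a+m_x)}$ is periodic of period $t\vec{v}_{\ell}$ with $t\leq m_x$, as claimed.

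The main (and essentially only) obstacle I anticipate is the careful bookkeeping: verifying $\mathcal{R}\subseteq \mathcal{U}\setminus \ell_{\scriptscriptstyle\mathcal{U}}$, checking that $\xi$ exhausts the alphabet $A^{\ell,p}_{a+}(\mathcal{U},x)$ (so that the alphabetical hypothesis of Theorem~\ref{AMHT}(i) really applies), and translating the one-sided Morse--Hedlund conclusion back into periodicity on $F^{+}(a+m_x)$. All these points follow directly from the definitions, and beyond them the argument is a faithful transcription of the doubly-infinite case in Lemma~\ref{lem_exist_faixa_period}.
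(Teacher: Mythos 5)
Your proposal is correct and follows essentially the same route as the paper, which simply declares the proof identical to that of Lemma~\ref{lem_exist_faixa_period} after replacing (\ref{desig_card}) by (\ref{eq_card_a}) and the two-sided case of Theorem~\ref{AMHT} by its one-sided case; you have merely spelled out the bookkeeping (the inclusion $\mathcal{R}\subset\mathcal{U}\setminus\ell_{\scriptscriptstyle\mathcal{U}}$, the exhaustion of the induced alphabet, and the translation of the conclusion of Theorem~\ref{AMHT}(i) into periodicity on $F^{\pm}(a+m_x)$) that the paper leaves implicit.
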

\begin{proof}
The proof is identical to that of the previous lemma, by taking (\ref{eq_card_a}) instead of (\ref{desig_card}) and using condition (ii) instead of condition (iii) in the Alphabetical Morse-Hedlund Theorem.
\end{proof}

If the set $\mathcal{U} \in \mathcal{F}^{V\!ol}_{C}$ does not have suitable geometrical properties, an $(\ell,\mathcal{U},p)$-(half-)strip might be a ``nonconnected'' set, in the sense that its convex hull contains further points of $\mathcal{U}$ than the (half-)strip itself. In such a situation, it could, for example, not be possible to extend to a half-plane the periodicity obtained for an $(\ell,\mathcal{U},p)$-strip in Lemma \ref{lem_exist_faixa_period}. We will prevent this inconvenience by imposing that lines parallel to $\ell_{\scriptscriptstyle\mathcal{U}}$ intersect $\mathcal{U}$ in a sufficient number of points (see condition (i) in the next definition). This leads us to consider balanced sets, a class of generating sets also obeying the bounds on their complexity highlighted in the previous lemmas (see condition (ii) below). 

\begin{definition}\label{dfn_lp_balanceado}
Given $\eta \in A^{\bb{Z}^2}$, let $\ell \in \bb{G}_1$ be a one-sided nonexpansive direction on $X_{\eta}$. An $(\eta,\ell)$-generating set $\mathcal{U} \in \mathcal{F}^{V\!ol}_{C}$ is said to be $(\ell,p)$-balanced (on $X_{\eta}$), $p \in \bb{N}$, if the following conditions hold:
\begin{enumerate}
	\item[(i)] for every $\ell' \neq \ell_{\scriptscriptstyle\mathcal{U}}$ that contains at least one point of $\bb{Z}^2$ and is parallel to $\ell$, $|\ell' \cap \mathcal{U}| \geq p$ whenever $\ell' \cap \conv(\mathcal{U})$ $\neq \emptyset$,
	\item[(ii)] for each $x \in \mathcal{M}(\ell,\mathcal{U})$ with $|A^{\ell,p}(\mathcal{U},x)| > 1$, we have that $P_{\eta}(\mathcal{U}) - P_{\eta}(\mathcal{U} \backslash \ell_{\scriptscriptstyle\mathcal{U}}) \leq p_x+|A^{\ell,p_x}(\mathcal{U},x)|-2$ for some positive integer $p_x \leq p$.
\end{enumerate}
\end{definition}

Under an appropriate bound on the complexity, Proposition \ref{pps_lp-balanceado} ensures the exis\-tence of balanced sets. Furthermore, as $x \in \mathcal{M}(\ell,\mathcal{U}+u)$ if, and only if, $T^{u}x \in \mathcal{M}(\ell,\mathcal{U})$, it is easy to argue that the property of being an $(\ell,p)$-balanced set is invariant by translations. If $\mathcal{U} \in \mathcal{F}^{V\!ol}_{C}$ satisfies condition (i) of Definition \ref{dfn_lp_balanceado}, it is im\-me\-di\-ate that $A^{\ell,p}(\mathcal{U},x) =$ $A^{\ell,p_x}(\mathcal{U},x)$ for every positive integer $p_x \leq p$ and $x \in X_{\eta}$.

\begin{remark}\label{rem_balan_semifaixa1}
Let $\mathcal{U} \in \mathcal{F}^{V\!ol}_{C}$ be an $(\ell,p)$-balanced set. If $x \in \mathcal{M}_{a\pm}(\ell,\mathcal{U})$ satisfies $|A_{s\pm}^{\ell,p}(\mathcal{U},x)| > 1$ for infinitely many integer $s \geq a$, then it is easy to see that there exists a positive integer $p_{x} \leq p$ such that $P_{\eta}(\mathcal{U}) - P_{\eta}(\mathcal{U} \backslash \ell_{\scriptscriptstyle\mathcal{U}}) \leq p_x+|A_{a\pm}^{\ell,p_x}(\mathcal{U},x)|-2$.
\end{remark}

If $\varpi,\varpi' \in E(\mathcal{U})$ are antiparallel edges, where $\mathcal{U} \in \mathcal{F}^{V\!ol}_{C}$ is a quasi-regular set (see Definition \ref{q-regular}),  let $R,S \subset \bb{R}^2$ denote the line segments connecting the initial and the final points of $\varpi$ and $\varpi'$. Line segments such as $R,S \subset \bb{R}^2$ are called \emph{axis of symmetry of $\mathcal{U}$}. It is easy to argue that each axis of symmetry $S \subset \bb{R}^2$ divides $\mathcal{U}$ in two subsets $A_S$ and $B_S $ with $A_S \cap B_S = S \cap \bb{Z}^2$, $A_S \cup B_S = \mathcal{U}$ and $|A_S| = |B_S|$.

\begin{remark}\label{rem_cap2_arest_paral}
Let $\mathcal{U} \in \mathcal{F}^{V\!ol}_{C}$ and suppose $\varpi,\varpi' \in E(\mathcal{U})$ are antiparallel edges. For any oriented line $\ell \subset \bb{R}^2$ parallel to $\varpi$ that intersects $\conv(\mathcal{U})$ and contains at least one point of $\bb{Z}^2$, if $|\varpi \cap \mathcal{U}|$ $\leq |\varpi' \cap \mathcal{U}|$, since the length of $\varpi$ is less or equal to the length of the line segment $\ell \cap \conv(\mathcal{U})$, then $|\ell \cap \mathcal{U}| \geq |\varpi \cap \mathcal{U}|-1$.
\end{remark}

The following result shows how a strong complexity assumption ensures the exis- tence of balanced sets for any one-sided nonexpansive direction.

\begin{proposition}
\label{pps_lp-balanceado}Given $\eta \in A^{\bb{Z}^2}$ aperiodic, suppose there exists a quasi-regular set $\mathcal{U} \in \mathcal{F}^{V\!ol}_{C}$ for which $P_{\eta}(\mathcal{U}) \leq \frac{1}{2}|\mathcal{U}|+|A|-1$. If $\ell \in \bb{G}_1$ is a one-sided nonexpansive direction on $X_{\eta}$, then there exists an $(\eta,\ell)$-generating set $\mathcal{S} \in \mathcal{F}^{V\!ol}_{C}$ such that
\begin{enumerate}
	\item[(i)] $|\ell_{\scriptscriptstyle\mathcal{S}} \cap \mathcal{S}| \leq |\lle_{\scriptscriptstyle\mathcal{S}} \cap \mathcal{S}|$,
	\item[(ii)] $P_{\eta}(\mathcal{S}) - P_{\eta}(\mathcal{S} \backslash \ell_{\scriptscriptstyle\mathcal{S}}) \leq |\ell_{\scriptscriptstyle\mathcal{S}} \cap \mathcal{S}|-1$. 	
\end{enumerate}
In particular, it follows that $\mathcal{S} \in \mathcal{F}^{V\!ol}_{C}$ is an $(\ell,p)$-balanced set with $p = |\ell_{\scriptscriptstyle\mathcal{S}} \cap \mathcal{S}|-1$.
\end{proposition}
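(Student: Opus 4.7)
The plan has three phases: first, exploit the halved complexity hypothesis together with quasi-regularity to produce, by iterative trimming, a suitable candidate set; second, extract from this candidate an $(\eta,\ell)$-generating set satisfying (i) and (ii); third, verify the $(\ell,p)$-balanced property using Remark \ref{rem_cap2_arest_paral}. Note first that, since $|\mathcal{U}|\geq 2$, the hypothesis gives $P_{\eta}(\mathcal{U})\leq\frac{1}{2}|\mathcal{U}|+|A|-1\leq |\mathcal{U}|+|A|-2$, so Remark \ref{rem_exis_conj} is available. Moreover, aperiodicity together with Remark following \ref{Obs_irracional_line} guarantees that whenever we reach an $(\eta,\ell)$-generating set in $\mathcal{F}^{V\!ol}_C$, its $\ell_{\mathcal{S}}$-edge satisfies $|\ell_{\mathcal{S}}\cap\mathcal{S}|\geq 2$.

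For phase one, I would set $\mathcal{V}_1=\mathcal{U}$ and define a two-sided trimming process in which, at each step, one removes $\lle_{\mathcal{V}_i}\cap\mathcal{V}_i$ whenever $|\ell_{\mathcal{V}_i}\cap\mathcal{V}_i|\leq|\lle_{\mathcal{V}_i}\cap\mathcal{V}_i|$ and the resulting $\mathcal{V}_{i+1}$ still satisfies $P_{\eta}(\mathcal{V}_{i+1})\leq|\mathcal{V}_{i+1}|+|A|-2$; otherwise one removes $\ell_{\mathcal{V}_i}\cap\mathcal{V}_i$ under the analogous condition. This alternation is precisely what the halved hypothesis allows: tracking the quantity $q_i:=|\mathcal{V}_i|-P_{\eta}(\mathcal{V}_i)$, the initial $q_1\geq\frac{1}{2}|\mathcal{U}|-|A|+1$ is large enough to accommodate $\lle$-side trims (which cost at most the top-row size in $q_i$), whereas the basic bound in Remark \ref{rem_exis_conj} would only permit a single-sided process. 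The process terminates in a finite set $\mathcal{V}\in\mathcal{F}^{V\!ol}_C$ with $|\ell_{\mathcal{V}}\cap\mathcal{V}|\leq|\lle_{\mathcal{V}}\cap\mathcal{V}|$.

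For phase two, apply Remark \ref{rem_exis_conj} to $\mathcal{V}$ with direction $\ell$ to obtain a minimal $(\eta,\ell)$-generating set $\mathcal{S}\subset\mathcal{V}$ fulfilling $P_{\eta}(\mathcal{S})-P_{\eta}(\mathcal{S}\backslash\ell_{\mathcal{S}})\leq|\ell_{\mathcal{S}}\cap\mathcal{S}|-1$, which is assertion (ii). Since the trimming in phase two only removes rows on the $\ell$-side, $\lle_{\mathcal{S}}\cap\mathcal{S}$ coincides with the $\lle$-extreme row of $\mathcal{V}$, and one checks (using quasi-regularity of $\mathcal{U}$ and the fact that any row revealed by $\ell$-trimming lies between the top and bottom antiparallel edges) that $|\ell_{\mathcal{S}}\cap\mathcal{S}|\leq|\lle_{\mathcal{S}}\cap\mathcal{S}|$, giving assertion (i).

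For the closing assertion, set $p=|\ell_{\mathcal{S}}\cap\mathcal{S}|-1\geq 1$. Condition (i) of Definition \ref{dfn_lp_balanceado} follows from Remark \ref{rem_cap2_arest_paral} applied to the antiparallel edges $\ell_{\mathcal{S}}\cap\mathcal{S}$ and $\lle_{\mathcal{S}}\cap\mathcal{S}$ of $\mathcal{S}$, whose lengths satisfy $|\ell_{\mathcal{S}}\cap\mathcal{S}|\leq|\lle_{\mathcal{S}}\cap\mathcal{S}|$ by (i); thus every line $\ell'\neq\ell_{\mathcal{S}}$ parallel to $\ell$ and meeting $\conv(\mathcal{S})$ carries at least $|\ell_{\mathcal{S}}\cap\mathcal{S}|-1=p$ points of $\mathcal{S}$. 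Condition (ii) of Definition \ref{dfn_lp_balanceado} is obtained by taking $p_x=p$: from (ii) of the proposition, for any $x\in\mathcal{M}(\ell,\mathcal{S})$ with $|A^{\ell,p}(\mathcal{S},x)|>1$, one has
\[
P_{\eta}(\mathcal{S})-P_{\eta}(\mathcal{S}\backslash\ell_{\mathcal{S}})\leq p \leq p+|A^{\ell,p}(\mathcal{S},x)|-2.
\]
The main obstacle lies in phase one: one must ensure that the alternating process actually achieves the balance $|\ell_{\mathcal{S}}\cap\mathcal{S}|\leq|\lle_{\mathcal{S}}\cap\mathcal{S}|$ while keeping the simple bound $|\cdot|+|A|-2$ alive long enough, and it is precisely the factor $\tfrac{1}{2}$ in the complexity hypothesis, interacting with quasi-regularity (which ties together the lengths of antiparallel edges of $\mathcal{U}$ and keeps intermediate rows of comparable size by Remark \ref{rem_cap2_arest_paral}), that provides the necessary budget.
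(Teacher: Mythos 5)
There is a genuine gap, and it sits exactly where you place "the main obstacle": your argument never actually secures assertion (i). Two problems compound. First, the alternating trimming of phase one is only sketched: the rule as stated removes the $\lle$-side row precisely when $|\ell_{\scriptscriptstyle\mathcal{V}_i}\cap\mathcal{V}_i|\leq|\lle_{\scriptscriptstyle\mathcal{V}_i}\cap\mathcal{V}_i|$ already holds, no stopping rule is given, and the claimed terminal property is asserted rather than proved from the budget $q_i$. Second, and more seriously, even granting a terminal $\mathcal{V}$ with $|\ell_{\scriptscriptstyle\mathcal{V}}\cap\mathcal{V}|\leq|\lle_{\scriptscriptstyle\mathcal{V}}\cap\mathcal{V}|$, the phase-two inference that the set $\mathcal{S}$ produced by Remark \ref{rem_exis_conj} satisfies $|\ell_{\scriptscriptstyle\mathcal{S}}\cap\mathcal{S}|\leq|\lle_{\scriptscriptstyle\mathcal{S}}\cap\mathcal{S}|$ is unjustified: the $\ell$-side trimming in that remark exposes as the new $\ell$-edge an \emph{interior} row of $\mathcal{V}$, and in a convex set interior rows parallel to an edge can be strictly longer than both that edge and its antiparallel companion (think of a hexagonal $\mathcal{U}$, where the middle rows beat both extreme edges). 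Convexity gives a concavity lower bound on intermediate rows, not an upper bound, so "lies between the antiparallel edges" does not yield (i). What is needed is that the $\lle$-edge of the set fed into Remark \ref{rem_exis_conj} be a row of \emph{maximal} cardinality among all lines parallel to $\ell$ meeting $\mathcal{U}$; your process does not arrange this, and your appeal to quasi-regularity remains at the level of a heuristic.

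This is precisely the point where the paper uses quasi-regularity concretely. It takes the line parallel to $\ell$ through the intersection $z$ of two axes of symmetry; this line meets a pair of antiparallel edges $\varpi,\varpi'$, hence has maximal lattice cardinality among lines parallel to $\ell$, and the half plane $\mathcal{H}(\lle')$ bounded (essentially) by it, passing through a vertex $u$ on $\varpi$ or $\varpi'$, contains one of the halves $A_S,B_S$ determined by the axis of symmetry through $u$; since $|A_S|=|B_S|$, the retained piece $\mathcal{T}=\mathcal{U}\cap\mathcal{H}(\lle')$ has at least $\frac{1}{2}|\mathcal{U}|+1$ points, whence $P_{\eta}(\mathcal{T})\leq|\mathcal{T}|+|A|-2$ in one clean cut. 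Feeding $\mathcal{T}$ into Remark \ref{rem_exis_conj} then gives (ii) directly, and (i) follows because $\lle_{\scriptscriptstyle\mathcal{S}}\cap\mathcal{S}=\ell'\cap\mathcal{U}$ is that maximal row, which dominates whatever row becomes $\ell_{\scriptscriptstyle\mathcal{S}}\cap\mathcal{S}$. Your closing verification that (i) and (ii) imply the $(\ell,p)$-balanced property (condition (i) of Definition \ref{dfn_lp_balanceado} via Remark \ref{rem_cap2_arest_paral}, condition (ii) with $p_x=p$) is correct and coincides with the paper's; but it is the construction delivering (i) that carries the content of the proposition, and that construction is missing from your proposal.
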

\begin{proof}
Initially, let $z \in \bb{R}^2$ be the intersection of two distinct axis of symmetry of $\mathcal{U}$. Since $\mathcal{U}$ is a quasi-regular set, the oriented line parallel to $\ell$ that passes through the point $z \in \bb{R}^2$ intersects antiparallel edges $\varpi,\varpi' \in E(\mathcal{U})$. Let $\lle' \subset \bb{R}^2$ be the oriented line antiparallel to $\ell$ for which the half plane $\mathcal{H}(\lle')$ is maximal among all half planes whose edge (antiparallel to $\ell$) has nonempty intersection with both $\varpi$ and $\varpi'$. By maximality, there exists a vertex on $\varpi$ or $\varpi'$, denoted by $u \in V(\mathcal{U})$, such that $u \in \ell' \, \cap$ $\bb{Z}^2$. In particular, at most one of the initial and final points of $\varpi$ and $\varpi'$ may not belong to $\mathcal{H}(\lle')$. So if $S \subset \bb{R}^2$ is the axis of symmetry with $u \in S$, then $\mathcal{H}(\lle')$ contains one of the sets $A_S,B_S \subset \mathcal{U}$. Without loss of generality, we assume that $B_S \subset \mathcal{H}(\lle')$ (see Figure \ref{exist_lp_balan}). Then, since $|A_S| = |B_S|$, we conclude that $$|\mathcal{U} \cap \mathcal{H}(\lle')| \geq |B_S| \geq |B_S|-\frac{1}{2}|A_S \cap B_S|+1 = \frac{1}{2}|\mathcal{U}|+1.$$ Denoting $\mathcal{T} := \mathcal{U} \cap \mathcal{H}(\lle')$, we have then $$P_{\eta}(\mathcal{T})-|\mathcal{T}| \leq P_{\eta}(\mathcal{U})-|\mathcal{T}| \leq \frac{1}{2}|\mathcal{U}|+|A|-1-|\mathcal{T}| \leq \frac{1}{2}|\mathcal{U}|+|A|-1-\frac{1}{2}|\mathcal{U}|-1,$$ so that $P_{\eta}(\mathcal{T}) \leq |\mathcal{T}|+|A|-2$.
\begin{figure}[ht]
	\centering
	\def\svgwidth{2.8cm}
	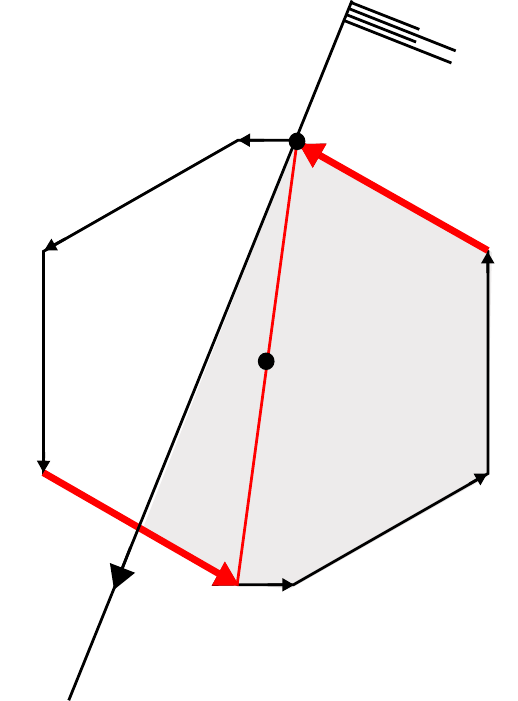
	\caption{The line \hspace{1.1cm} and the subsets $A_S,B_S \subset \mathcal{U}$.}
	\label{exist_lp_balan}
	\vspace{-13pt}\hspace{-1.3cm}$\lle' \subset \bb{R}^2$
\end{figure}

Since (by construction) $\ell'$ has nonempty intersection with $\varpi,\varpi' \in E(\mathcal{U})$ and $\mathcal{U}$ is a con\-vex set, the length of $\ell'' \cap \conv(\mathcal{U})$ is less or equal to the length of $\ell' \cap \conv(\mathcal{U})$ for all oriented line $\ell'' \subset \bb{R}^2$ parallel to $\ell$. This means that 
\begin{equation}\label{eq_max_cardinalidade}
|\ell' \cap \mathcal{U}| = \max\left\{|\ell'' \cap \mathcal{U}| : \ell'' \ \textrm{is parallel to} \ \ell\right\}.
\end{equation}

According to Remark \ref{rem_exis_conj}, there exists a half plane $\mathcal{H} \subset \bb{Z}^2$ (whose edge is paral- lel to $\ell$) and an $(\eta,\ell)$-generating set $\mathcal{S} \in \mathcal{F}^{V\!ol}_{C}$ such that
\begin{equation}\label{eq_des_ret_antip}
\mathcal{S} \backslash \ell_{\scriptscriptstyle\mathcal{S}} =  \left(\mathcal{U} \cap \mathcal{H}(\lle')\right) \cap \mathcal{H} = \mathcal{T} \cap \mathcal{H},
\end{equation}
\begin{equation}\label{eq_des_principal_antip}
P_{\eta}(\mathcal{S}) - P_{\eta}(\mathcal{S} \backslash \ell_{\scriptscriptstyle\mathcal{S}}) \leq |\ell_{\scriptscriptstyle\mathcal{S}} \cap \mathcal{S}|-1.
\end{equation} 
Note that $\lle_{\mathcal{S}} = \lle_{\mathcal{S} \backslash \ell_{\scriptscriptstyle\mathcal{S}}} = \lle_{(\mathcal{U} \cap \mathcal{H}(\lle')) \cap \mathcal{H}} = \lle_{\mathcal{U} \cap \mathcal{H}(\lle')} = \lle'$, where the first equality holds for any set $\mathcal{S} \in \mathcal{F}^{V\!ol}_{C}$ and the third follows because the edge of $\mathcal{H}$ is antiparallel to $\lle'$. Furthermore, from (\ref{eq_des_ret_antip}) we get $\lle_{\scriptscriptstyle\mathcal{S}} \cap \mathcal{S} = \lle_{\scriptscriptstyle\mathcal{S}} \cap (\mathcal{S} \backslash \ell_{\scriptscriptstyle\mathcal{S}}) = \lle' \cap \mathcal{U}$. So by (\ref{eq_max_cardinalidade}) one has $$|\lle_{\scriptscriptstyle\mathcal{S}} \cap \mathcal{S}| = |\ell' \cap \mathcal{U}| \geq |\ell_{\scriptscriptstyle\mathcal{S}} \cap \mathcal{U}| \geq |\ell_{\scriptscriptstyle\mathcal{S}} \cap \mathcal{S}|.$$ 

Finally, we claim that $\mathcal{S}$ is an $(\ell,p)$-balanced set with $p := |\ell_{\scriptscriptstyle\mathcal{S}} \cap \mathcal{S}|-1$. Indeed, Remark \ref{rem_cap2_arest_paral} ensures condition (i) of Definition \ref{dfn_lp_balanceado}, and from inequality~\eqref{eq_des_principal_antip}, for each $x \in \mathcal{M}(\ell,\mathcal{S})$ with $|A^{\ell,p}(\mathcal{S},x)| > 1$,  we get $P_{\eta}(\mathcal{S}) - P_{\eta}(\mathcal{S} \backslash \ell_{\scriptscriptstyle\mathcal{S}}) \leq p \leq p + |A^{\ell,p}(\mathcal{S},x)|-2$, i.e., condition (ii) of Definition~\ref{dfn_lp_balanceado} holds with $p_x = p$.
\end{proof}

The notion of balanced sets introduced here has some advantages.  For example, if $\ell \in \bb{G}_1$ is a one-sided nonexpansive direction on $X_{\eta}$ and $\mathcal{S} \in \mathcal{F}^{V\!ol}_{C}$ is an $(\eta,\ell)$-ge- nerating set where $P_{\eta}(\mathcal{S}) - P_{\eta}(\mathcal{S} \backslash \ell_{\scriptscriptstyle\mathcal{S}}) \leq |\ell_{\scriptscriptstyle\mathcal{S}} \cap \mathcal{S}|-1$, then $\mathcal{S}$ may be $(\ell,p)$-balanced even when $|\ell_{\scriptscriptstyle\mathcal{S}} \cap \mathcal{S}| > |\lle_{\scriptscriptstyle\mathcal{S}} \cap \mathcal{S}|$. Indeed, if $|\lle_{\scriptscriptstyle\mathcal{S}} \cap \mathcal{S}| > 1$, it is enough to have $$|\ell_{\scriptscriptstyle\mathcal{S}} \cap \mathcal{S}| \leq |\lle_{\scriptscriptstyle\mathcal{S}} \cap \mathcal{S}|+|A^{\ell,p}(\mathcal{S},x)|-2$$ for every configuration $x \in \mathcal{M}(\ell,\mathcal{S})$ with $|A^{\ell,p}(\mathcal{S},x)| > 1$, where $p := |\lle_{\scriptscriptstyle\mathcal{S}} \cap \mathcal{S}|-1$.

\subsection{Extending periodicity from strips}

For an $(\ell,p)$-balanced set $\mathcal{T} \in \mathcal{F}^{V\!ol}_{C}$, we define $$\varPhi_{p}(\ell,\mathcal{T}) := P_{\eta}(\mathcal{T})-P_{\eta}(\mathcal{T} \backslash \ell_{\scriptscriptstyle\mathcal{T}})$$ if $|A^{\ell,p}(\mathcal{T},x)| = 1$ for every $x \in \mathcal{M}(\ell,\mathcal{T})$ and $$\varPhi_{p}(\ell,\mathcal{T}) := \max\left\{p_x+|A^{\ell,p_x}(\mathcal{T},x)|-2 : x \in \mathcal{M}(\ell,\mathcal{T}) \ \textrm{and} \  |A^{\ell,p_x}(\mathcal{T},x)| > 1\right\}$$ otherwise, where $p_x \leq p$ is the smallest integer fulfilling Definition~\ref{dfn_lp_balanceado}. Although the definition of $\varPhi_{p}(\ell,\mathcal{T})$ may seem somewhat elaborate, we prefer to use it to make it clear that we can extend periodicity beyond strips even without explicitly using any hypothesis about complexity. When we consider our alphabetical upper bound for complexity, however, the function $\varPhi_{p}(\ell,\mathcal{T})$ may be replaced by a nice expression (see the inequality (\ref{maineq_sec4})).

The next lemma will allow us to extend the periodicity to larger strips and then to half planes.

\begin{lemma}\label{lema_extens}
Given $\eta \in A^{\bb{Z}^2}$, suppose $\ell \in \bb{G}_1$ is a one-sided nonexpansive direction on $X_{\eta}$ and $\mathcal{U} \in \mathcal{F}^{V\!ol}_{C}$ is an $(\ell,p)$-balanced set. If the restriction of $x \in X_{\eta}$ to the $(\ell,\mathcal{U},p)$-strip $F$ is periodic of period $t'\vec{v}_{\ell}$ for some $t' \in \bb{N}$, then $x|\sob{\ell_{\scriptscriptstyle\mathcal{U}} \cup F}$ is periodic of period $t\vec{v}_{\ell}$, where $t = t'$ if $x \not \in \mathcal{M}(\ell,\mathcal{U})$ and $t \leq$ $2\varPhi_{p}(\ell,\mathcal{U})$ otherwise.
\end{lemma}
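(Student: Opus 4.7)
My plan is to split the argument on whether $x \in \mathcal{M}(\ell,\mathcal{U})$. Note first that condition~(i) of Definition~\ref{dfn_lp_balanceado} guarantees $\mathcal{U} \backslash \ell_{\scriptscriptstyle\mathcal{U}} \subset F$ and that $F$ is an $\ell$-strip. If $x \notin \mathcal{M}(\ell,\mathcal{U})$, then some $\gamma \in L^{\ell}(\mathcal{U} \backslash \ell_{\scriptscriptstyle\mathcal{U}}, x)$ satisfies $N_{\mathcal{U}}(\ell,\gamma) = 1$; setting $y := T^{t'\vec{v}_{\ell}}x$, the $\vec{v}_\ell$-invariance of $F$ together with the $t'\vec{v}_\ell$-periodicity of $x|\sob{F}$ yields $y|\sob{F} = x|\sob{F}$, and Lemma~\ref{lem_exist_x} immediately gives $y|\sob{\ell_{\scriptscriptstyle\mathcal{U}} \cup F} = x|\sob{\ell_{\scriptscriptstyle\mathcal{U}} \cup F}$, i.e., the desired periodicity with $t = t'$.

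Assume now $x \in \mathcal{M}(\ell,\mathcal{U})$. My first move will be to shrink $t'$: Definition~\ref{dfn_lp_balanceado}(ii) combined with Lemma~\ref{lem_exist_faixa_period} (or the direct observation when $|A^{\ell,p}(\mathcal{U},x)| = 1$) produces a period $t_0\vec{v}_\ell$ for $x|\sob{F}$ with $t_0 \leq \varPhi_p(\ell,\mathcal{U})$. Since the minimal $\vec{v}_\ell$-period of $x|\sob{F}$ divides both $t_0$ and $t'$, I may assume $t'$ itself is this minimal period, hence $t' \leq \varPhi_p(\ell,\mathcal{U})$; in particular, the sequence $\gamma_t := (T^{t\vec{v}_\ell}x)|\sob{\mathcal{U}\backslash\ell_{\scriptscriptstyle\mathcal{U}}}$ has minimal period exactly $t'$ and $|L^{\ell}(\mathcal{U}\backslash\ell_{\scriptscriptstyle\mathcal{U}},x)| \leq t' \leq \varPhi_p(\ell,\mathcal{U})$. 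Writing $\beta_t := (T^{t\vec{v}_\ell}x)|\sob{\mathcal{U}}$ and using that $P_\eta(\mathcal{U}) - P_\eta(\mathcal{U} \backslash \ell_{\scriptscriptstyle\mathcal{U}}) \leq \varPhi_p(\ell,\mathcal{U})$ (immediate from the definition of $\varPhi_p$ together with Definition~\ref{dfn_lp_balanceado}(ii)), I would bound
\begin{equation*}
\begin{split}
|L^{\ell}(\mathcal{U},x)| &\leq \sum_{\gamma \in L^{\ell}(\mathcal{U}\backslash\ell_{\scriptscriptstyle\mathcal{U}},x)} N_{\mathcal{U}}(\ell,\gamma) \\
&\leq \bigl(P_{\eta}(\mathcal{U}) - P_{\eta}(\mathcal{U} \backslash \ell_{\scriptscriptstyle\mathcal{U}})\bigr) + |L^{\ell}(\mathcal{U} \backslash \ell_{\scriptscriptstyle\mathcal{U}},x)| \leq 2\varPhi_p(\ell,\mathcal{U}).
\end{split}
\end{equation*}
A pigeonhole applied to $\beta_0, \beta_1, \ldots, \beta_{2\varPhi_p(\ell,\mathcal{U})}$ then produces indices $0 \leq i < j \leq 2\varPhi_p(\ell,\mathcal{U})$ with $\beta_i = \beta_j$; setting $T := j - i$, the equality $\gamma_i = \gamma_j$ combined with minimality of $t'$ for $\gamma_t$ forces $t' \mid T$.

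To conclude, I would set $y := T^{T\vec{v}_\ell}x$: since $t' \mid T$ we have $y|\sob{F} = x|\sob{F}$, and $\beta_i = \beta_j$ gives $y|\sob{\mathcal{U}+i\vec{v}_\ell} = x|\sob{\mathcal{U}+i\vec{v}_\ell}$, so $y$ and $x$ agree on $F$ and on the segment $(\ell_{\scriptscriptstyle\mathcal{U}} \cap \mathcal{U}) + i\vec{v}_\ell$ of $\ell_{\scriptscriptstyle\mathcal{U}}$, of length $m := |\ell_{\scriptscriptstyle\mathcal{U}} \cap \mathcal{U}| \geq 2$ by Remark~\ref{Obs_irracional_line}. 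The $(\eta,\ell)$-generating property makes the two endpoints of $\ell_{\scriptscriptstyle\mathcal{U}} \cap \mathcal{U}$ $\eta$-generated vertices, and this persists for every translate $\mathcal{U}+s\vec{v}_\ell$ by shift-invariance of $P_\eta$. I would then argue by induction: given $y = x$ on $F$ and on the $\ell_{\scriptscriptstyle\mathcal{U}}$-interval already established, the set $(\mathcal{U}+(s+1)\vec{v}_\ell) \backslash \{v\}$ --- where $v$ is the forward endpoint of $(\ell_{\scriptscriptstyle\mathcal{U}} \cap \mathcal{U}) + (s+1)\vec{v}_\ell$ --- lies in $F$ union the known interval (it uses only positions $s+1, \ldots, s+m-1$ on $\ell_{\scriptscriptstyle\mathcal{U}}$), so $y$ and $x$ agree there, and the $\eta$-generation of $v$ forces $y_v = x_v$, extending the agreement by one position on $\ell_{\scriptscriptstyle\mathcal{U}}$. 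A symmetric argument works to the left, and iterating in both directions I would obtain $y|\sob{\ell_{\scriptscriptstyle\mathcal{U}} \cup F} = x|\sob{\ell_{\scriptscriptstyle\mathcal{U}} \cup F}$, i.e., periodicity with $t = T \leq 2\varPhi_p(\ell,\mathcal{U})$. The main obstacle will be precisely this inductive propagation along $\ell_{\scriptscriptstyle\mathcal{U}}$: the $(\eta,\ell)$-generating hypothesis only removes one vertex at a time, and I must exploit $m \geq 2$ to secure the length-$(m-1)$ overlap between consecutive translates of $\ell_{\scriptscriptstyle\mathcal{U}} \cap \mathcal{U}$ that allows the propagation.
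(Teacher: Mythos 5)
The skeleton of your argument coincides with the paper's up to one pivotal step, and that step has a genuine gap: from $\beta_i=\beta_j$ (hence $\gamma_i=\gamma_j$) you infer that the minimal period $t'$ of the sequence $(\gamma_t)$ divides $T=j-i$. Equality of two terms of a periodic sequence at distance $T$ does not force the minimal period to divide $T$ (the word $abacabac\cdots$ has minimal period $4$ but equal letters at distance $2$). Concretely, $\gamma_i=\gamma_j$ only says that $x$ and $T^{T\vec{v}_{\ell}}x$ agree on the finite window $(\mathcal{U}\backslash\ell_{\scriptscriptstyle\mathcal{U}})+i\vec{v}_{\ell}$; to upgrade this to agreement on all of $F$ (equivalently to $t'\mid T$) you would need each line of that window to contain at least $t'-\gcd(T,t')$ consecutive points, so that a Fine--Wilf argument applies, but the only bound you have is $t'\leq\varPhi_{p}(\ell,\mathcal{U})=p_x+|A^{\ell,p_x}(\mathcal{U},x)|-2$, which may well exceed $p$. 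The obvious repair --- running the pigeonhole only over the times $0,t',2t',\ldots$, which does give a multiple of $t'$ --- destroys the quantitative conclusion: it yields a period bounded by roughly $N_{\mathcal{U}}(\ell,\gamma_0)\,t'$ rather than $2\varPhi_{p}(\ell,\mathcal{U})$, and the $2\varPhi_{p}$ bound is exactly what is used later (e.g.\ in Proposition~\ref{pps_cap3_baixa_compl} and Proposition~\ref{afm_principal}). Without $t'\mid T$ you cannot assert $y|_{F}=x|_{F}$, and then the inductive propagation along $\ell_{\scriptscriptstyle\mathcal{U}}$ has nothing to feed on.

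The paper closes precisely this gap by a different mechanism, making no use of the given period $t'$ at that stage: it takes the same pigeonhole pair on $\mathcal{U}$ (so $x|_{\mathcal{U}}=(T^{t\vec{v}_{\ell}}x)|_{\mathcal{U}}$ with $t\leq 2\varPhi_{p}(\ell,\mathcal{U})$) and then proves directly that this $t$ is a period of $x|_{F}$. Writing $\xi_i:=(T^{i\vec{v}_{\ell}}x)|_{\mathcal{I}^{\ell,p_x}(\mathcal{U})}$, one has $P_{\xi}(p_x)\leq p_x+|A^{\ell,p_x}(\mathcal{U},x)|-2$; choosing the smallest $1<p_0\leq p_x$ with $P_{\xi}(p_0)\leq p_0+|A^{\ell,p_0}(\mathcal{U},x)|-2$ forces $P_{\xi}(p_0)=P_{\xi}(p_0-1)$, i.e.\ each $(p_0-1)$-word of $\xi$ extends uniquely to a $p_0$-word, and since $x|_{\mathcal{U}}=(T^{t\vec{v}_{\ell}}x)|_{\mathcal{U}}$ gives $\xi_0\cdots\xi_{p_0-1}=\xi_t\cdots\xi_{t+p_0-1}$, this unique-extension property propagates the coincidence and shows $\xi$ (hence $x|_{F}$) is $t$-periodic. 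That Morse--Hedlund minimality/unique-extension argument is the ingredient missing from your proposal; the rest of what you wrote --- the case $x\notin\mathcal{M}(\ell,\mathcal{U})$ via Lemma~\ref{lem_exist_x}, the count $|L^{\ell}(\mathcal{U},x)|\leq 2\varPhi_{p}(\ell,\mathcal{U})$, and the induction along $\ell_{\scriptscriptstyle\mathcal{U}}$ using the $\eta$-generated vertices --- does match the paper and is fine.
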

\begin{proof}
Initially, if $x \not\in \mathcal{M}(\ell,\mathcal{U})$, since $x|\sob{F}$ is periodic of period $t'\vec{v}_{\ell}$, that is, $x|\sob{F} =$ $(T^{t'\vec{v}_{\ell}}x)|\sob{F}$, from Lemma \ref{lem_exist_x} it follows that $x|\sob{\ell_{\scriptscriptstyle\mathcal{U}} \cup F} = (T^{t'\vec{v}_{\ell}}x)|\sob{\ell_{\scriptscriptstyle\mathcal{U}} \cup F}$.

Suppose $x \in \mathcal{M}(\ell,\mathcal{U})$ and $|A^{\ell,p}(\mathcal{U},x)| > 1$. In this case, from (\ref{desig_card}) we obtain that $|L^{\ell}(\mathcal{U} \backslash \ell_{\scriptscriptstyle\mathcal{U}},x)| \leq$ $p_x+\big|A^{\ell,p_x}(\mathcal{U},x)\big|-2 \leq \varPhi_{p}(\ell,\mathcal{U})$ where $p_x \leq p$ is the smallest integer fulfilling Definition~\ref{dfn_lp_balanceado}. Therefore,
\begin{align}\label{des_lemma_princ}
\big|L^{\ell}(\mathcal{U},x)\big|-\varPhi_{p}(\ell,\mathcal{U}) & \leq \left(\sum_{\gamma \in L^{\ell}(\mathcal{U} \backslash \ell_{\scriptscriptstyle\mathcal{U}},x)} N_{\mathcal{U}}(\ell,\gamma)\right)-\big|L^{\ell}(\mathcal{U} \backslash \ell_{\scriptscriptstyle\mathcal{U}},x)\big|\nonumber\\ & = \sum_{\gamma \in L^{\ell}(\mathcal{U} \backslash \ell_{\scriptscriptstyle\mathcal{U}},x)} \Big(N_{\mathcal{U}}(\ell,\gamma)-1\Big)\nonumber\\ & \leq \sum_{\gamma \in L(\mathcal{U} \backslash \ell_{\scriptscriptstyle\mathcal{U}},\eta)} \Big(N_{\mathcal{U}}(\ell,\gamma)-1\Big)\nonumber\\ & = P_{\eta}(\mathcal{U})-P_{\eta}(\mathcal{U} \backslash \ell_{\scriptscriptstyle\mathcal{U}}) \leq p_x+\big|A^{\ell,p_x}(\mathcal{U},x)\big|-2,
\end{align}
which yields
\begin{equation*}
|L^{\ell}(\mathcal{U},x)| \leq \varPhi_{p}(\ell,\mathcal{U})+p_x+|A^{\ell,p_x}(\mathcal{U},x)|-2 \leq 2\varPhi_{p}(\ell,\mathcal{U}).
\end{equation*}
By the Pigeonhole Principle, we can assume, without loss of generality, that there exists a positive integer $t \leq 2\varPhi_{p}(\ell,\mathcal{U})$ such that $x|\sob{\mathcal{U}} = (T^{t\vec{v}_{\ell}}x)|\sob{\mathcal{U}}$. If we also have $x|\sob{\mathcal{U} \cup F} = (T^{t\vec{v}_{\ell}}x)|\sob{\mathcal{U} \cup F}$, since $\mathcal{U} \in \mathcal{F}^{V\!ol}_{C}$ is an $(\eta,\ell)$-generating set, by induction, we obtain $$x|\sob{\ell_{\scriptscriptstyle\mathcal{U}} \cup F} = (T^{t\vec{v}_{\ell}}x)|\sob{\ell_{\scriptscriptstyle\mathcal{U}} \cup F}.$$ So to finish this case, it is enough to show that $x|\sob{F}$ is periodic of period $t\vec{v}_{\ell}$. Indeed, let $\xi = (\xi_i)_{i \in \bb{Z}}$ be the sequence defined by $\xi_{i} := (T^{i\,\vec{v}_{\ell}}x)|\sob{\mathcal{I}^{\ell,p_x}(\mathcal{U})}$ for all $i \in \bb{Z}$. As $|A^{\ell,p_x}(\mathcal{U},x)| > 1$ and  $P_{\xi}(p_x) \leq$ $|L^{\ell}(\mathcal{U} \backslash \ell_{\scriptscriptstyle\mathcal{U}},x)| \leq p_x+|A^{\ell,p_x}(\mathcal{U},x)|-2$, let $1 < p_0 \leq p_x$ be the smallest integer such that $P_{\xi}(p_0) \leq p_0 + |A^{\ell,p_0}(\mathcal{U},x)|-2$. It is easy to see that by minimality $P_{\xi}(p_0) = P_{\xi}(p_0-1)$, which means that a word of $p_0-1$ symbols admits exactly one extension to a word of $p_0$ symbols. Hence, since from $x|\sob{\mathcal{U}} = $ $(T^{t\vec{v}_{\ell}}x)|\sob{\mathcal{U}}$ one has $\xi_{0}\xi_{1} \cdots \xi_{p_0-1} = \xi_{t}\xi_{t+1} \cdots \xi_{t+p_0-1}$, by induction it follows that the sequence $\xi$ is periodic of period $t$. In other words, $x|\sob{F}$ is periodic of period $t\vec{v}_{\ell}$.

Suppose $x \in \mathcal{M}(\ell,\mathcal{U})$ and $|A^{\ell,p}(\mathcal{U},x)| = 1$. In this case, there exists a unique  $\mathcal{U} \backslash \ell_{\scriptscriptstyle\mathcal{U}}$-configuration $\gamma \in L^{\ell}(\mathcal{U} \backslash \ell_{\scriptscriptstyle\mathcal{U}},x)$ and, therefore, $$\big|L^{\ell}(\mathcal{U},x)\big|-1 \leq N_{\mathcal{U}}(\ell,\gamma)-1 \leq P_{\eta}(\mathcal{U})-P_{\eta}(\mathcal{U} \backslash \ell_{\scriptscriptstyle\mathcal{U}}).$$ As before, we can assume that there is a positive integer $t \leq P_{\eta}(\mathcal{U})-P_{\eta}(\mathcal{U} \backslash \ell_{\scriptscriptstyle\mathcal{U}})+1$ such that $x|\sob{\mathcal{U}} = (T^{t\vec{v}_{\ell}}x)|\sob{\mathcal{U}}$. Since $|A^{\ell,p}(\mathcal{U},x)| = 1$, $x|\sob{F}$ is in particular periodic of period $t\vec{v}_{\ell}$. The same argument as in the previous case allows us to conclude that the restriction of $x$ to $\ell_{\scriptscriptstyle\mathcal{U}} \cup F$ is periodic of period $t\vec{v}_{\ell}$. Finally, as $P_{\eta}(\mathcal{U}) > P_{\eta}(\mathcal{U} \backslash \ell_{\scriptscriptstyle\mathcal{U}}) $, note that $$P_{\eta}(\mathcal{U})-P_{\eta}(\mathcal{U} \backslash \ell_{\scriptscriptstyle\mathcal{U}})+1 \leq 2\big(P_{\eta}(\mathcal{U})-P_{\eta}(\mathcal{U} \backslash \ell_{\scriptscriptstyle\mathcal{U}})\big) = 2\varPhi_{p}(\ell,\mathcal{U}).$$ This completes the proof.
\end{proof}

For $(\ell,\mathcal{U},p)$-half-strips $F^+(a),F^-(a) \subset \bb{Z}^2$, we define 
\begin{equation}\label{semifaixat1}
(\ell_{\scriptscriptstyle\mathcal{U}} \cup F^+)(a) := \bigcup_{t \geq a} \Big(\mathcal{I}^{\ell,p}(\mathcal{U}) \cup \{i_{\mathcal{U}}(\ell_{\scriptscriptstyle\mathcal{U}})\}+t\vec{v}_{\ell}\Big)
\end{equation}
and
\begin{equation}\label{semifaixat2}
(\ell_{\scriptscriptstyle\mathcal{U}} \cup F^-)(a) := \bigcup_{t \geq a} \Big(\mathcal{F}^{\ell,p}(\mathcal{U}) \cup \{f_{\mathcal{U}}(\ell_{\scriptscriptstyle\mathcal{U}})\}-t\vec{v}_{\ell}\Big).
\end{equation}

\begin{lemma}\label{rem_equival_lema}
For $\eta \in A^{\bb{Z}^2}$ and a rational oriented line $\ell \in \bb{G}_1$, let $\mathcal{U} \in \mathcal{F}^{V\!ol}_{C}$ be an $(\eta,\ell)$-generating set. If $x \not\in$ $\mathcal{M}_{a\pm}(\ell,\mathcal{U})$, then, for every $(\ell,\mathcal{U},p)$-half-strip $F^{\pm}(a)$ with $\conv(F^{\pm}(a)) \cap \bb{Z}^2 = F^{\pm}(a)$ and any configuration $y \in X_{\eta}$, $x|\sob{F^{\pm}(a)} = y|\sob{F^{\pm}(a)}$ implies $x|\sob{(\ell_{\scriptscriptstyle\mathcal{U}} \cup F^{\pm})(a)} = y|\sob{(\ell_{\scriptscriptstyle\mathcal{U}} \cup F^{\pm})(a)}$.
\end{lemma}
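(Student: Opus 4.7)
My plan is to adapt the proof of Lemma \ref{lem_exist_x} to the half-strip setting, using both endpoints of $\ell_{\scriptscriptstyle\mathcal{U}} \cap \mathcal{U}$ to propagate the agreement forward and backward along the half-strip. I will argue the case $F^{+}(a)$; the case $F^{-}(a)$ follows by reversing the orientation of $\ell$.

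Since $x \notin \mathcal{M}_{a+}(\ell,\mathcal{U})$, condition (\ref{eq_pontos_naoexpan_sfaixa_v1}) fails, so there is some $\tau_0 \geq a$ such that the configuration $\gamma_0 := (T^{\tau_0 \vec{v}_\ell}x)|\sob{\mathcal{U}\backslash \ell_{\scriptscriptstyle\mathcal{U}}}$ satisfies $N_{\mathcal{U}}(\ell,\gamma_0)=1$, i.e., $\gamma_0$ admits a unique extension to an element of $L(\mathcal{U},\eta)$. Next I observe that the convexity hypothesis $\conv(F^{+}(a)) \cap \bb{Z}^2 = F^{+}(a)$ forces every oriented line $\ell' \neq \ell_{\scriptscriptstyle\mathcal{U}}$ parallel to $\ell$ that meets $\mathcal{U}$ to satisfy $|\ell' \cap \mathcal{U}| \geq p$; otherwise $\conv(F^{+}(a))$ would contain lattice points on a translate of $\ell'$ that lie outside $F^{+}(a)$. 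From this it follows directly that $(\mathcal{U} \backslash \ell_{\scriptscriptstyle\mathcal{U}}) + \tau\vec{v}_\ell \subset F^{+}(a)$ for every $\tau \geq a$, by writing any $u \in \mathcal{U} \backslash \ell_{\scriptscriptstyle\mathcal{U}}$ as $u = i_{\mathcal{U}}(\ell') + s\vec{v}_\ell$ with $s \geq 0$. Therefore $x|\sob{F^{+}(a)} = y|\sob{F^{+}(a)}$ yields $(T^{\tau \vec{v}_\ell}x)|\sob{\mathcal{U}\backslash \ell_{\scriptscriptstyle\mathcal{U}}} = (T^{\tau \vec{v}_\ell}y)|\sob{\mathcal{U}\backslash \ell_{\scriptscriptstyle\mathcal{U}}}$ for all $\tau \geq a$, and applying this at $\tau = \tau_0$ together with $N_{\mathcal{U}}(\ell,\gamma_0)=1$ produces agreement of $x$ and $y$ on the full translate $\mathcal{U} + \tau_0 \vec{v}_\ell$.

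The second main step is to spread this agreement by a two-sided induction in $\tau$. Write $\ell_{\scriptscriptstyle\mathcal{U}} \cap \mathcal{U} = \{g_1, g_1+\vec{v}_\ell,\ldots,g_m\}$: by convexity of $\mathcal{U}$ this is a consecutive segment, and its only vertices are the endpoints $g_1$ and $g_m$, both of which are $\eta$-generated by $\mathcal{U}$ because $\mathcal{U}$ is an $(\eta,\ell)$-generating set. To go from agreement on $\mathcal{U} + \tau\vec{v}_\ell$ to agreement on $\mathcal{U} + (\tau+1)\vec{v}_\ell$, note that every point of $\mathcal{U} + (\tau+1)\vec{v}_\ell$ other than $g_m + (\tau+1)\vec{v}_\ell$ either lies in $(\mathcal{U}\backslash \ell_{\scriptscriptstyle\mathcal{U}}) + (\tau+1)\vec{v}_\ell \subset F^{+}(a)$, or, via the identity $g_i + (\tau+1)\vec{v}_\ell = g_{i+1} + \tau \vec{v}_\ell$, lies in $(\ell_{\scriptscriptstyle\mathcal{U}}\cap \mathcal{U}) + \tau\vec{v}_\ell$; in both cases the value is already known. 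The remaining point $g_m + (\tau+1)\vec{v}_\ell$ is then fixed by the $\eta$-generating property of $g_m$. A symmetric argument based on $g_1$ propagates the agreement backward from $\tau_0$ down to $\tau=a$, using that $(\mathcal{U}\backslash \ell_{\scriptscriptstyle\mathcal{U}}) + (\tau-1)\vec{v}_\ell \subset F^{+}(a)$ as long as $\tau-1 \geq a$. Taking the union, $x$ and $y$ coincide on $\bigcup_{\tau \geq a}(\mathcal{U} + \tau\vec{v}_\ell)$, which contains $(\ell_{\scriptscriptstyle\mathcal{U}} \cup F^{+})(a)$ since $\mathcal{I}^{\ell,p}(\mathcal{U}) \cup \{i_{\mathcal{U}}(\ell_{\scriptscriptstyle\mathcal{U}})\} \subset \mathcal{U}$.

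The main obstacle is the first step: the definition of $F^{+}(a)$ only uses lines with $|\ell' \cap \mathcal{U}| \geq p$, so without the convexity hypothesis the half-strip could miss interior lines parallel to $\ell$ and thereby break the inclusion $(\mathcal{U}\backslash \ell_{\scriptscriptstyle\mathcal{U}}) + \tau \vec{v}_\ell \subset F^{+}(a)$. Converting $\conv(F^{+}(a)) \cap \bb{Z}^2 = F^{+}(a)$ into this inclusion is the single real geometric point of the proof; after that, the two-sided propagation is a routine variant of Lemma \ref{lem_exist_x}, with $g_m$ driving the forward induction and $g_1$ driving the backward one.
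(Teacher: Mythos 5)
Your overall route is the paper's own: the paper proves this lemma simply by declaring that the argument of Lemma \ref{lem_exist_x} applies verbatim, and your unpacking of $x \notin \mathcal{M}_{a\pm}(\ell,\mathcal{U})$ into a single index $\tau_0 \geq a$ with $N_{\mathcal{U}}(\ell,\gamma_0)=1$, followed by the two-sided propagation along $\ell_{\scriptscriptstyle\mathcal{U}}$ driven by the two $\eta$-generated vertices $i_{\mathcal{U}}(\ell_{\scriptscriptstyle\mathcal{U}})$ and $f_{\mathcal{U}}(\ell_{\scriptscriptstyle\mathcal{U}})$, is exactly the intended adaptation; that part is carried out correctly, and the backward steps indeed stop at the translate $\mathcal{U}+a\vec{v}_{\ell}$, which is all the conclusion needs.

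The step that does not hold as you state it is the geometric conversion of $\conv(F^{+}(a)) \cap \bb{Z}^2 = F^{+}(a)$ into ``every line $\ell' \neq \ell_{\scriptscriptstyle\mathcal{U}}$ parallel to $\ell$ meeting $\mathcal{U}$ has $|\ell' \cap \mathcal{U}| \geq p$''. Lattice-convexity of the half-strip only rules out a non-qualifying line lying strictly \emph{between} two lines that carry points of $F^{+}(a)$ (there the hull does pick up extra lattice points far out in the $\vec{v}_{\ell}$ direction). It says nothing about rows of $\mathcal{U}$ lying outside the band spanned by the qualifying lines: if, say, the rows of $\mathcal{U}$ adjacent to $\ell_{\scriptscriptstyle\mathcal{U}}$, or at the opposite extreme, have fewer than $p$ points, they are simply absent from $F^{+}(a)$ and from $\conv(F^{+}(a))$, so your hypothesis is not violated, yet $(\mathcal{U}\backslash\ell_{\scriptscriptstyle\mathcal{U}})+\tau\vec{v}_{\ell} \not\subset F^{+}(a)$, and both your first step (transferring the agreement of $x$ and $y$ to $(\mathcal{U}\backslash\ell_{\scriptscriptstyle\mathcal{U}})+\tau_0\vec{v}_{\ell}$) and the generating step of the induction lose their input. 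So the inclusion you need is an additional assumption, not a consequence of the stated convexity condition. To be fair, the paper's one-line proof glosses over exactly this point; in every application of the lemma the set $\mathcal{U}$ is $(\ell,p)$-balanced, and condition (i) of Definition \ref{dfn_lp_balanceado} supplies $(\mathcal{U}\backslash\ell_{\scriptscriptstyle\mathcal{U}})+\tau\vec{v}_{\ell} \subset F^{\pm}(a)$ directly, after which your argument goes through. A minor further remark: the $F^{-}(a)$ case is not obtained by ``reversing the orientation of $\ell$'' (that would replace $\ell_{\scriptscriptstyle\mathcal{U}}$ by $\lle_{\scriptscriptstyle\mathcal{U}}$); it is the mirror argument with $-\vec{v}_{\ell}$ and the final points $f_{\mathcal{U}}(\cdot)$ in place of $\vec{v}_{\ell}$ and the initial points, the roles of the two vertices being exchanged.
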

\begin{proof}
The proof is identical to that one of Lemma \ref{lem_exist_x}.
\end{proof}

Next lemma is the analogous of Lemma \ref{lema_extens} for half strips (recall that $-\vec{v}_{\ell} = \vec{v}_{\lle}$).

\begin{lemma}\label{lem_extens_semifaixa}
Given $\eta \in A^{\bb{Z}^2}$, suppose $\ell \in \bb{G}_1$ is a one-sided nonexpansive direction on $X_{\eta}$ and $\mathcal{U} \in \mathcal{F}^{V\!ol}_{C}$ is an $(\ell,p)$-balanced set. If the restriction of $x \in X_{\eta}$ to the $(\ell,\mathcal{U},p)$-half-strip $F^{\pm}(a)$, $a \in \bb{Z}$, is periodic of period $\pm t'\vec{v}_{\ell}$ for some $t' \in \bb{N}$, then $x|\sob{(\ell_{\scriptscriptstyle\mathcal{U}} \cup F^{\pm})(a)}$ is periodic of period $\pm t\vec{v}_{\ell}$, where $t = t'$ if $x \not \in \mathcal{M}_{a\pm}(\ell,\mathcal{U})$ and $t \leq$ $2\varPhi_{p}(\ell,\mathcal{U})$ otherwise.
\end{lemma}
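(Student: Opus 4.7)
The plan is to mirror the three-case proof of Lemma~\ref{lema_extens}, replacing its two-sided objects with the half-strip analogues $\mathcal{M}_{a\pm}(\ell,\mathcal{U})$, $L^{\ell}_{a\pm}(\cdot,x)$, $F^{\pm}(a)$, and $A^{\ell,p}_{a\pm}(\mathcal{U},x)$, and calling on the one-sided counterparts of the supporting results: Lemma~\ref{rem_equival_lema} in place of Lemma~\ref{lem_exist_x}, Remark~\ref{rem_balan_semifaixa1} in place of condition~(ii) of Definition~\ref{dfn_lp_balanceado}, inequality~(\ref{eq_card_a}) in place of~(\ref{desig_card}), and clause~(i) of Theorem~\ref{AMHT} in place of clause~(ii).

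If $x \notin \mathcal{M}_{a\pm}(\ell,\mathcal{U})$, take $y := T^{\pm t'\vec{v}_{\ell}}x$; the hypothesised periodicity gives $x|\sob{F^{\pm}(a)} = y|\sob{F^{\pm}(a)}$, and Lemma~\ref{rem_equival_lema} lifts this equality to $(\ell_{\scriptscriptstyle\mathcal{U}} \cup F^{\pm})(a)$ (condition~(i) of Definition~\ref{dfn_lp_balanceado} guarantees the convexity assumption $\conv(F^{\pm}(a)) \cap \bb{Z}^2 = F^{\pm}(a)$ needed there), so $t = t'$ works. If instead $x \in \mathcal{M}_{a\pm}(\ell,\mathcal{U})$ and $|A^{\ell,p}_{a\pm}(\mathcal{U},x)| > 1$, the $\pm t'\vec{v}_{\ell}$-periodicity of $x|\sob{F^{\pm}(a)}$ forces $|A^{\ell,p}_{s\pm}(\mathcal{U},x)| > 1$ for every $s \geq a$, so Remark~\ref{rem_balan_semifaixa1} supplies an integer $p_x \leq p$ with $P_{\eta}(\mathcal{U}) - P_{\eta}(\mathcal{U} \backslash \ell_{\scriptscriptstyle\mathcal{U}}) \leq p_x + |A^{\ell,p_x}_{a\pm}(\mathcal{U},x)| - 2 \leq \varPhi_{p}(\ell,\mathcal{U})$. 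Replaying the chain~(\ref{des_lemma_princ}) with $L^{\ell}_{a\pm}$ in place of $L^{\ell}$ and~(\ref{eq_card_a}) in the last step yields $|L^{\ell}_{a\pm}(\mathcal{U},x)| \leq 2\varPhi_{p}(\ell,\mathcal{U})$, and Pigeonhole furnishes $a \leq s < s' \leq a + 2\varPhi_{p}(\ell,\mathcal{U})$ with $(T^{\pm s\vec{v}_{\ell}}x)|\sob{\mathcal{U}} = (T^{\pm s'\vec{v}_{\ell}}x)|\sob{\mathcal{U}}$; setting $t := s' - s$ and applying the minimal-$p_0$ argument of Lemma~\ref{lema_extens} to the one-sided sequence $\xi_i := (T^{\pm(a+i)\vec{v}_{\ell}}x)|\sob{\mathcal{I}^{\ell,p_x}(\mathcal{U})}$, $i \geq 0$, produces $\pm t\vec{v}_{\ell}$-periodicity of $x|\sob{F^{\pm}(a)}$, which the $(\eta,\ell)$-generating property of $\mathcal{U}$ extends by induction to $(\ell_{\scriptscriptstyle\mathcal{U}} \cup F^{\pm})(a)$.

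The remaining sub-case $|A^{\ell,p}_{a\pm}(\mathcal{U},x)| = 1$ is treated exactly as in the final paragraph of Lemma~\ref{lema_extens}: uniqueness of $\gamma \in L^{\ell}_{a\pm}(\mathcal{U} \backslash \ell_{\scriptscriptstyle\mathcal{U}},x)$ yields $|L^{\ell}_{a\pm}(\mathcal{U},x)| - 1 \leq P_{\eta}(\mathcal{U}) - P_{\eta}(\mathcal{U} \backslash \ell_{\scriptscriptstyle\mathcal{U}})$; Pigeonhole then gives $t \leq P_{\eta}(\mathcal{U}) - P_{\eta}(\mathcal{U} \backslash \ell_{\scriptscriptstyle\mathcal{U}}) + 1 \leq 2\varPhi_{p}(\ell,\mathcal{U})$; and the same generating-set induction closes the argument (noting that $|A^{\ell,p}_{a\pm}| = 1$ automatically makes $x|\sob{F^{\pm}(a)}$ $\pm t\vec{v}_{\ell}$-periodic). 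The main subtlety, compared with Lemma~\ref{lema_extens}, is that Pigeonhole and the uniqueness-induction must operate over the one-sided range $s \geq a$; however, no offset analogous to the $a + m_x$ shift from Lemma~\ref{lem_exist_sfaixa_period} is needed, because here $t$ is extracted directly from the equality at $s$ and $s'$ and the periodicity assumption on $F^{\pm}(a)$ itself already anchors the induction at the endpoint~$a$.
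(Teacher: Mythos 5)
Your route is the paper's own: the published proof of this lemma is essentially the instruction to rerun Lemma~\ref{lema_extens} with Lemma~\ref{rem_equival_lema} in place of Lemma~\ref{lem_exist_x} and (\ref{eq_card_a}) in place of (\ref{desig_card}), and your case split, pigeonhole step and minimal-$p_0$ induction (correctly anchored by the hypothesised periodicity of $x|_{F^{\pm}(a)}$, which restores the two-sided propagation for the one-sided sequence) all match it. The one step you assert without justification is the inequality $p_x+|A^{\ell,p_x}_{a\pm}(\mathcal{U},x)|-2\leq\varPhi_{p}(\ell,\mathcal{U})$. Remark~\ref{rem_balan_semifaixa1} only provides $P_{\eta}(\mathcal{U})-P_{\eta}(\mathcal{U}\backslash\ell_{\scriptscriptstyle\mathcal{U}})\leq p_x+|A^{\ell,p_x}_{a\pm}(\mathcal{U},x)|-2$, while $\varPhi_{p}(\ell,\mathcal{U})$ is a maximum taken only over configurations $y\in\mathcal{M}(\ell,\mathcal{U})$ and their full-strip alphabets $A^{\ell,p_y}(\mathcal{U},y)$; your $x$ is only known to lie in $\mathcal{M}_{a\pm}(\ell,\mathcal{U})$, so the half-strip quantity is not a priori among the numbers being maximised. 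This is exactly the point the paper's proof settles: it passes to an accumulation point $z$ of $(T^{\pm t\vec{v}_{\ell}}x)_{t\geq a}$, observes $z\in\mathcal{M}(\ell,\mathcal{U})$ with $|A^{\ell,p}(\mathcal{U},z)|>1$, and uses the periodicity of $x|_{F^{\pm}(a)}$ to identify $A^{\ell,p_z}(\mathcal{U},z)=A^{\ell,p_z}_{a\pm}(\mathcal{U},x)$, whence the needed comparison with $\varPhi_{p}(\ell,\mathcal{U})$.

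The gap is small and can be closed in two ways. Either insert the accumulation-point identification above (taking $p_x:=p_z$), or bypass the comparison altogether: for the pigeonhole step you only need $|L^{\ell}_{a\pm}(\mathcal{U},x)|\leq 2\varPhi_{p}(\ell,\mathcal{U})$, and this follows from $|L^{\ell}_{a\pm}(\mathcal{U},x)|\leq |L^{\ell}_{a\pm}(\mathcal{U}\backslash\ell_{\scriptscriptstyle\mathcal{U}},x)|+\sum_{\gamma\in L(\mathcal{U}\backslash\ell_{\scriptscriptstyle\mathcal{U}},\eta)}\big(N_{\mathcal{U}}(\ell,\gamma)-1\big)\leq 2\big(P_{\eta}(\mathcal{U})-P_{\eta}(\mathcal{U}\backslash\ell_{\scriptscriptstyle\mathcal{U}})\big)$ via (\ref{eq_card_a}), combined with the observation that $P_{\eta}(\mathcal{U})-P_{\eta}(\mathcal{U}\backslash\ell_{\scriptscriptstyle\mathcal{U}})\leq\varPhi_{p}(\ell,\mathcal{U})$ holds in both cases of the definition of $\varPhi_{p}$ (in the second case by condition (ii) of Definition~\ref{dfn_lp_balanceado} applied to any witness $y\in\mathcal{M}(\ell,\mathcal{U})$ with $|A^{\ell,p}(\mathcal{U},y)|>1$); the minimal-$p_0$ induction only uses $P_{\xi}(p_x)\leq p_x+|A^{\ell,p_x}_{a\pm}(\mathcal{U},x)|-2$, which Remark~\ref{rem_balan_semifaixa1} together with (\ref{eq_card_a}) already gives you. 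A purely notational point: in the $F^{-}(a)$ case the induced sequence should be built over $\mathcal{F}^{\ell,p}(\mathcal{U})$ rather than $\mathcal{I}^{\ell,p}(\mathcal{U})$.
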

\begin{proof}
The proof is similar to that one of Lemma \ref{lema_extens}. If $x \not \in \mathcal{M}_{a\pm}(\ell,\mathcal{U})$, we use Lemma~\ref{rem_equival_lema} instead of Lemma \ref{lem_exist_x}. If $x \in \mathcal{M}_{a\pm}(\ell,\mathcal{U})$ and $|A_{a\pm}^{\ell,p}(\mathcal{U},x)| > 1$, since the sequence $(T^{\pm t\vec{v}_{\ell}}x)_{t \geq a} \subset \mathcal{M}_{a\pm}(\ell,\mathcal{U})$ has an accumulation point $z \in \mathcal{M}(\ell,\mathcal{U})$ with $|A^{\ell,p}(\mathcal{U},z)| > 1$, then there exists a smallest integer $p_{z} \leq p$ fulfilling Definition \ref{dfn_lp_balanceado}. Due to periodicity of $x|\sob{F^{\pm}(a)}$, we have $A^{\ell,p_z}(\mathcal{U},z) = A_{a\pm}^{\ell,p_z}(\mathcal{U},x)$. So just use (\ref{eq_card_a}) instead of  (\ref{desig_card}).
\end{proof}

\begin{proposition}\label{pps_period_semiplano}
Let $\eta \in A^{\bb{Z}^2}$ and suppose $\ell \in \bb{G}_1$ is a one-sided nonexpansive direction on $X_{\eta}$ and $\mathcal{U} \in$ $\mathcal{F}^{V\!ol}_{C}$ is an $(\ell,p)$-balanced set. If the restriction of $x \in X_{\eta}$ to the $(\ell,\mathcal{U},p)$-strip is periodic of period $t'\vec{v}_{\ell}$ for some $t' \in \bb{N}$, then, for any transla\-tion $\mathcal{U}'$ of $\mathcal{U}$ with $\mathcal{U}' \subset \mathcal{H}(\lle_{\scriptscriptstyle\mathcal{U}})$, the restriction of $x$ to the $(\ell,\mathcal{U}',p)$-strip is periodic of period $t\vec{v}_{\ell}$, where $t \leq \max\{t',2\varPhi_{p}(\ell,\mathcal{U})\}$. In particular, the configuration $x|\sob{\mathcal{H}(\lle_{\scriptscriptstyle\mathcal{U}})}$ is $\ell$-periodic.
\end{proposition}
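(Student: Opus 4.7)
The strategy is to iterate Lemma \ref{lema_extens} on lattice translates of $\mathcal{U}$, extending the $\ell$-periodic region one line at a time until the whole half-plane $\mathcal{H}(\lle_\mathcal{U})$ is covered. Fix a basis $\{\vec{v}_{\ell}, \vec{w}\}$ of $\bb{Z}^2$ with $\vec{w}$ chosen so that $\mathcal{U}+\vec{w}$ sits one lattice step deeper into $\mathcal{H}(\lle_\mathcal{U})$; such a $\vec{w}$ exists because $\lle_\mathcal{U}$ is rational and $\mathcal{U}$ touches $\lle_\mathcal{U}$. Set $\mathcal{U}_d := \mathcal{U}+d\vec{w}$ for $d \geq 0$. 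Since $(\ell,p)$-balancedness and $\varPhi_p(\ell,\cdot)$ are translation-invariant, every $\mathcal{U}_d$ remains $(\ell,p)$-balanced with $\varPhi_p(\ell,\mathcal{U}_d)=\varPhi_p(\ell,\mathcal{U})$.

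I would prove by induction on $d \geq 0$ that the restriction of $x$ to $\ell_{\mathcal{U}_d} \cup F(\mathcal{U}_d)$ (writing $F(\mathcal{U}_d)$ for the $(\ell,\mathcal{U}_d,p)$-strip) is $\ell$-periodic of some period $t_d\vec{v}_\ell$ with $t_d \leq \max\{t',2\varPhi_p(\ell,\mathcal{U})\}$. The base case $d=0$ is Lemma \ref{lema_extens} applied to $\mathcal{U}$ with input period $t'$. For the inductive step, condition~(i) of Definition \ref{dfn_lp_balanceado} guarantees that every lattice line parallel to $\ell$ meeting $\conv(\mathcal{U}_{d-1})$ other than $\ell_{\mathcal{U}_{d-1}}$ contributes to $F(\mathcal{U}_{d-1})$; a line-by-line comparison then yields the crucial inclusion $F(\mathcal{U}_d) = F(\mathcal{U}_{d-1})+\vec{w} \subset \ell_{\mathcal{U}_{d-1}} \cup F(\mathcal{U}_{d-1})$. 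By restriction, $x|\sob{F(\mathcal{U}_d)}$ is $\ell$-periodic of period $t_{d-1}\vec{v}_\ell$, and a second invocation of Lemma \ref{lema_extens} on $\mathcal{U}_d$ with this input period provides the desired period $t_d\vec{v}_\ell$ on $\ell_{\mathcal{U}_d} \cup F(\mathcal{U}_d)$ with $t_d \leq \max\{t_{d-1},2\varPhi_p(\ell,\mathcal{U})\} \leq \max\{t',2\varPhi_p(\ell,\mathcal{U})\}$.

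For a general translate $\mathcal{U}' = \mathcal{U}+u \subset \mathcal{H}(\lle_\mathcal{U})$, decompose $u = c\vec{v}_\ell + d\vec{w}$; the inclusion $\mathcal{U}' \subset \mathcal{H}(\lle_\mathcal{U})$ forces $d \geq 0$ and, since integer $\vec{v}_\ell$-translations leave every $(\ell, \cdot, p)$-strip invariant, $F(\mathcal{U}') = F(\mathcal{U}_d)$, so the induction supplies the required period bound. Finally, $\bigcup_{d \geq 0}\bigl(\ell_{\mathcal{U}_d} \cup F(\mathcal{U}_d)\bigr) = \mathcal{H}(\lle_\mathcal{U})$, and the $t_d$'s take values in the finite set $\{1,\ldots,\max\{t',2\varPhi_p(\ell,\mathcal{U})\}\}$, so their least common multiple $T$ is finite; then $T\vec{v}_\ell$ is a common $\ell$-period of $x$ on every $\ell_{\mathcal{U}_d} \cup F(\mathcal{U}_d)$, hence on all of $\mathcal{H}(\lle_\mathcal{U})$. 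The delicate point is precisely the inclusion $F(\mathcal{U}_d) \subset \ell_{\mathcal{U}_{d-1}} \cup F(\mathcal{U}_{d-1})$: without the ``no-hole'' property guaranteed by condition~(i) of $(\ell,p)$-balancedness, $F(\mathcal{U}_d)$ could contain lattice lines absent from $\ell_{\mathcal{U}_{d-1}} \cup F(\mathcal{U}_{d-1})$ and the restriction argument would break down.
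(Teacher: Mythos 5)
Your proof is correct and follows essentially the same route as the paper's: translating $\mathcal{U}$ one lattice line at a time into $\mathcal{H}(\lle_{\scriptscriptstyle\mathcal{U}})$ via a vector realizing $\ell \mapsto \ell^{(-)}$, applying Lemma \ref{lema_extens} at each step together with translation invariance of balancedness and of $\varPhi_p$, and using the inclusion of the new strip in the previous line-plus-strip (which condition (i) of Definition \ref{dfn_lp_balanceado} guarantees, as you correctly flag). The only differences are expository: the paper phrases the induction as a contradiction with a maximal $\kappa'$, while you argue directly and make explicit the reduction of a general translate to the $\vec{w}$-direction and the lcm step for the final half-plane periodicity.
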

\begin{proof}
Let $u \in (\bb{Z}^2)^*$ be such that $\ell+u = \ell^{(-)}$ (Recall Notation \ref{not_nextline}). Let $P_{\mathcal{U}}$ denote the set of $\kappa \in$ $\bb{Z}_+$ such that the restriction of $x \in X_{\eta}$ to the $(\ell,\mathcal{U}+\kappa u,p)$-strip is periodic of period $\tau'\vec{v}_{\ell}$ for some $\tau' \leq$ $\max\{t',2\varPhi_{\ell,p}(\mathcal{U})\}$. Suppose, by contradiction, that $P_{\mathcal{U}}$ does not coincide with $\bb{Z}_+$. Let $\kappa' \in$ $P_{\mathcal{U}}$ be the largest integer for which $i \in P_{\mathcal{U}}$ for every $0 \leq i \leq \kappa'$. Since the restriction of $x$ to the $(\ell,\mathcal{U}+\kappa'u,p)$-strip $F_{\kappa'}$ is periodic of period $\tau'\vec{v}_{\ell}$ for some integer $\tau' \leq \max\{t',2\varPhi_{p}(\ell,\mathcal{U})\}$, according to Lemma~\ref{lema_extens} the configuration $x|\sob{\ell_{(\mathcal{U}+\kappa'u)} \cup F_{\kappa'}}$ is periodic of period $\tau\vec{v}_{\ell}$, where $\tau = \tau'$ if $x \not \in$ $\mathcal{M}(\ell,\mathcal{U}+\kappa'u)$ and $\tau \leq 2\varPhi_{p}(\ell,\mathcal{U}+\kappa'u)$ otherwise. The fact that $x \in \mathcal{M}(\ell,\mathcal{U}+\kappa'u)$ if, and only if, $T^{\kappa'u}x \in \mathcal{M}(\ell,\mathcal{U})$ implies $\varPhi_{p}(\ell,\mathcal{U}+\kappa'u) = \varPhi_{p}(\ell,\mathcal{U})$. So we have that the restriction of $x$ to the $(\ell,\mathcal{U}+(\kappa'+1)u,p)$-strip is periodic of period $\tau \vec{v}_{\ell}$ with $\tau \leq \max\{t',2\varPhi_{p}(\ell,\mathcal{U})\}$, which contradicts the maximality of $\kappa' \in \bb{Z}_+$.
\end{proof}

The thesis of corollary below was first obtained in Proposition \ref{pps_par_antipar_exp} for periodic configurations. Here, as a consequence of the previous results this hypothesis can be replaced by the existence of a balanced set.

\begin{corollary}\label{cor_reta_antiparalela}
If $\mathcal{U} \in \mathcal{F}^{V\!ol}_{C}$ is $(\ell,p)$-balanced, then the oriented line $\lle$, antiparallel to $\ell  \in \bb{G}_1$, is also a one-sided nonexpansive direction on $X_{\eta}$. 
\end{corollary}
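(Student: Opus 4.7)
The plan is to proceed by contradiction, following the strategy of Proposition~\ref{pps_par_antipar_exp} but with a configuration derived from the balanced set playing the role that $\eta$ plays there. I will assume $\lle$ is one-sided expansive on $X_\eta$. By Lemma~\ref{lem_dir_exp} there is a set $\mathcal{S}\in\mathcal{F}_C^{V\!ol}$ with $\lle_{\scriptscriptstyle\mathcal{S}}\cap\mathcal{S}=\{g_0\}$ being $\eta$-generated by $\mathcal{S}$; this is the mechanism that will propagate coincidences across $\lle_{\scriptscriptstyle\mathcal{S}}$.

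Since $\mathcal{U}$ is $(\ell,p)$-balanced, $\ell$ is one-sided nonexpansive on $X_\eta$, so $\mathcal{M}(\ell,\mathcal{U})$ is nonempty. For any $x\in\mathcal{M}(\ell,\mathcal{U})$, condition (ii) of Definition~\ref{dfn_lp_balanceado} combined with Lemma~\ref{lem_exist_faixa_period} (or a direct observation in the degenerate case $|A^{\ell,p}(\mathcal{U},x)|=1$) yields that the restriction of $x$ to the $(\ell,\mathcal{U},p)$-strip is periodic of period $t\vec{v}_\ell$ for some $t\in\bb{N}$. Proposition~\ref{pps_period_semiplano} then extends this periodicity to the half plane: $x|\sob{\mathcal{H}(\lle_{\scriptscriptstyle\mathcal{U}})}$ is $\ell$-periodic with some period $\tau\vec{v}_\ell$, $\tau\in\bb{N}$.

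Since $\mathcal{H}(\lle_{\scriptscriptstyle\mathcal{U}})$ is a translate of $\mathcal{H}(\lle)$, shift-invariance of $X_\eta$ together with the assumed one-sided expansiveness of $\lle$ forces any two configurations of $X_\eta$ agreeing on $\mathcal{H}(\lle_{\scriptscriptstyle\mathcal{U}})$ to coincide everywhere. Applied to $x$ and $T^{\tau\vec{v}_\ell}x$, which agree on $\mathcal{H}(\lle_{\scriptscriptstyle\mathcal{U}})$ by the just-established $\ell$-periodicity, this gives $x=T^{\tau\vec{v}_\ell}x$, so $x$ is $\ell$-periodic on all of $\bb{Z}^2$. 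Applying Proposition~\ref{pps_par_antipar_exp} to the now-periodic $x$ on the subshift $X_x\subset X_\eta$: since $\lle$ is one-sided expansive on $X_\eta$ and hence on $X_x$, the line $\ell$ must be expansive on $X_x$. Combined with Lemma~\ref{pps_perio_expan} (which ensures expansiveness of every line not containing $\tau\vec{v}_\ell$), every one-dimensional subspace is expansive on $X_x$, and Corollary~\ref{cor_2_periodic} forces $x$ to be doubly periodic.

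To close the contradiction I would exhibit two distinct configurations $x,y\in\mathcal{M}(\ell,\mathcal{U})$ with $x|\sob{\mathcal{H}(\ell)}=y|\sob{\mathcal{H}(\ell)}$: a compactness extraction starting from arbitrary witnesses of the one-sided nonexpansiveness of $\ell$, passing to subsequential limits of $\vec{v}_\ell$-shifts, should place such a pair inside $\mathcal{M}(\ell,\mathcal{U})$. Both $x$ and $y$ would then be doubly periodic by the previous step, so the intersection $L:=L_x\cap L_y$ of their period lattices is still a rank-two sublattice of $\bb{Z}^2$; a fundamental domain of $L$ fits inside the half plane $\mathcal{H}(\ell)$, so agreement of $x$ and $y$ on $\mathcal{H}(\ell)$ surjects onto $\bb{Z}^2/L$, forcing $x=y$ and contradicting $x\neq y$. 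The main obstacle will be this last step: producing the witness pair inside $\mathcal{M}(\ell,\mathcal{U})$ requires a careful compactness argument, since the defining multiple-extension property of $\mathcal{M}(\ell,\mathcal{U})$ is not automatic from the one-sided nonexpansiveness of $\ell$ alone.
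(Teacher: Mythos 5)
Your route coincides with the paper's for most of its length: assume $\lle$ one-sided expansive, use condition (ii) of Definition~\ref{dfn_lp_balanceado} with Lemma~\ref{lem_exist_faixa_period} and Proposition~\ref{pps_period_semiplano} to make every $x\in\mathcal{M}(\ell,\mathcal{U})$ $\ell$-periodic on the half plane $\mathcal{H}(\lle_{\mathcal{U}})$, upgrade this to global $\ell$-periodicity by the assumed expansiveness of $\lle$, and then combine Proposition~\ref{pps_par_antipar_exp}, Lemma~\ref{pps_perio_expan} and Corollary~\ref{cor_2_periodic} to conclude double periodicity; the closing lattice-intersection argument is also the intended one. (The set $\mathcal{S}$ you extract from Lemma~\ref{lem_dir_exp} in your first paragraph is never used afterwards and can be dropped.) The genuine gap is exactly the step you flag: you have no mechanism to place a pair of distinct configurations agreeing on $\mathcal{H}(\ell)$ inside $\mathcal{M}(\ell,\mathcal{U})$, and the compactness extraction you sketch does not obviously work — subsequential limits of $\vec{v}_{\ell}$-shifts need not preserve simultaneously the agreement on $\mathcal{H}(\ell)$, the fact that the two configurations differ, and the multiple-extension property defining $\mathcal{M}(\ell,\mathcal{U})$, which is not a closed condition you can chase through limits.

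The paper resolves this deterministically, with no compactness. Since $\ell$ is one-sided nonexpansive (this is part of being $(\ell,p)$-balanced), pick $x,x'\in X_{\eta}$ with $x|\sob{\mathcal{H}(\ell)}=x'|\sob{\mathcal{H}(\ell)}$ and, after translating so that the line of disagreement closest to $\ell$ becomes $\ell^{(-)}$, with $x_g\neq x'_g$ for some $g\in\ell^{(-)}\cap\bb{Z}^2$. Take the translate $\mathcal{U}'$ of $\mathcal{U}$ with $\ell_{\mathcal{U}'}=\ell^{(-)}$ (balancedness is translation invariant). If, say, $x\notin\mathcal{M}(\ell,\mathcal{U}')$, there is $\gamma\in L^{\ell}(\mathcal{U}'\backslash\ell_{\mathcal{U}'},x)$ with $N_{\mathcal{U}'}(\ell,\gamma)=1$; choosing an $\ell$-strip $F$ with $\mathcal{U}'\backslash\ell_{\mathcal{U}'}\subset F\subset\mathcal{H}(\ell)$, Lemma~\ref{lem_exist_x} turns the equality $x|\sob{F}=x'|\sob{F}$ into $x|\sob{\ell_{\mathcal{U}'}\cup F}=x'|\sob{\ell_{\mathcal{U}'}\cup F}$, i.e.\ agreement on $\ell^{(-)}$, contradicting $x_g\neq x'_g$. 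Hence both witnesses lie in $\mathcal{M}(\ell,\mathcal{U}')$, and your steps 2--4 (applied to $\mathcal{U}'$) plus your final lattice argument finish the proof. In short, the missing idea is that membership in $\mathcal{M}(\ell,\mathcal{U}')$ is forced by Lemma~\ref{lem_exist_x} once the disagreement is normalized to sit on $\ell_{\mathcal{U}'}=\ell^{(-)}$; without this, your proposal is incomplete at its decisive step.
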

\begin{proof}
Let $x,x' \in X_{\eta}$ be configurations where $x|\sob{\mathcal{H}(\ell)} = x'|\sob{\mathcal{H}(\ell)}$, but $x_g \neq$ $x'_g$ for some $g \in \ell^{(-)} \cap \bb{Z}^2$. Let $\mathcal{U}' \in \mathcal{F}^{V\!ol}_{C}$ denote a translation of $\mathcal{U} \in \mathcal{F}^{V\!ol}_{C}$ such that $\ell_{\scriptscriptstyle\mathcal{U}'} =$ $\ell^{(-)}$. It follows by Lemma \ref{lem_exist_x} that $x,x' \in \mathcal{M}(\ell,\mathcal{U}')$. Thanks to Lemma~\ref{lem_exist_faixa_period}, the restrictions of $x$ and $x'$ to the $(\ell,\mathcal{U}'\!,p)$-strip are $\ell$-periodic. Proposition~\ref{pps_period_semiplano} ensures then that the restrictions of $x$ and $x'$ to the half plane $\mathcal{H}(\lle_{\scriptscriptstyle\mathcal{U}'})$ are $\ell$-peri\-odic of periods $h,h' \in (\bb{Z}^2)^*$. Suppose, by contradiction, that $\lle \in \bb{G}_1$ is a one-sided expansive direction on $X_{\eta}$. Since $x|\sob{\mathcal{H}(\lle)} = (T^{h}x)|\sob{\mathcal{H}(\lle)}$ and $x'|\sob{\mathcal{H}(\lle)} = (T^{h'}\!x')|\sob{\mathcal{H}(\lle)}$, it follows by expansiveness that $x,x' \in X_{\eta}$ are $\ell$-periodic. Thus, due to Proposition~\ref{pps_par_antipar_exp}, the oriented lines $\ell, \lle \in \bb{G}_1$ are both one-sided expansive directions on both sub\-shifts $\overline{Orb\,(x)}$ and $\overline{Orb\,(x')}$. With respect to the others lines, from Lemma \ref{pps_perio_expan} we get that they are also expansive on both sub\-shifts. Hence, Corollary~\ref{cor_2_periodic} implies that $x,x' \in X_{\eta}$ are doubly periodic. Therefore, as $x|\sob{\mathcal{H}(\ell)} = x'|\sob{\mathcal{H}(\ell)}$, we conclude that $x$ and $x'$ are equals, which is a contradiction.
\end{proof}

Note that, by the very definition, the vertices of an edge of an $(\ell,p)$-balanced set parallel to $\lle$ are not necessarily generated. Thus, it is not a surprise that, to extend periodicity to the entire plane, one has to simultaneously consider $(\ell,p)$-balanced and $(\lle,q)$-balanced sets. The following result shows how balanced sets impose peri- odicity for some configurations.

\begin{proposition}\label{pps_period_global}
For $\eta \in A^{\bb{Z}^2}$, suppose $\ell \in \bb{G}_1$ is a one-sided nonexpansive direction on $X_{\eta}$ and $\mathcal{U},\mathcal{T} \in \mathcal{F}^{V\!ol}_{C}$ are, respectively, $(\ell,p)$-balanced and $(\lle,q)$-balanced. If the restriction of $x \in X_{\eta}$ to the $(\ell,\mathcal{U},p)$-strip is periodic of period $t'\vec{v}_{\ell}$ for some $t' \in \bb{N}$, then, for any translation $\mathcal{U}'$ of $\mathcal{U}$, the restriction of $x$ to the $(\ell,\mathcal{U}',p)$-strip is periodic of period $t\vec{v}_{\ell}$, where $t \leq \max\{t',2\varPhi_{p}(\ell,\mathcal{U})\}$. In particular, the configuration $x$ is $\ell$-periodic. Similarly, if the restriction of a  configuration to the $(\lle,\mathcal{T},q)$-strip is periodic, then the configuration is $\lle$-periodic.
\end{proposition}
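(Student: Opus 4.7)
The plan is to establish the global $\ell$-periodicity of $x$ by applying Proposition \ref{pps_period_semiplano} twice: once with the pair $(\mathcal{U},\ell)$ to extend the hypothesised $(\ell,\mathcal{U},p)$-strip periodicity into the half plane $\mathcal{H}(\lle_{\scriptscriptstyle\mathcal{U}})$, and once with $(\mathcal{T}',\lle)$, for a suitable translation $\mathcal{T}'$ of $\mathcal{T}$, to extend periodicity into the complementary half plane. Corollary \ref{cor_reta_antiparalela} (applied to $\mathcal{U}$) guarantees that $\lle$ is itself a one-sided nonexpansive direction on $X_\eta$, and every translation of $\mathcal{T}$ inherits the $(\lle,q)$-balanced property, so both invocations are legitimate.

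Concretely, Proposition \ref{pps_period_semiplano} applied as stated yields, for every translation $\mathcal{U}' \subset \mathcal{H}(\lle_{\scriptscriptstyle\mathcal{U}})$, an $(\ell,\mathcal{U}',p)$-strip on which $x$ has period $t\vec{v}_\ell$ with $t \leq M := \max\{t',2\varPhi_p(\ell,\mathcal{U})\}$; in particular $x|\sob{\mathcal{H}(\lle_{\scriptscriptstyle\mathcal{U}})}$ is $\ell$-periodic. Next, I would choose a translation $\mathcal{T}'$ of $\mathcal{T}$ placed deep enough inside $\mathcal{H}(\lle_{\scriptscriptstyle\mathcal{U}})$ that the full $(\lle,\mathcal{T}',q)$-strip --- having finite extent in the direction orthogonal to $\ell$ --- still lies inside $\mathcal{H}(\lle_{\scriptscriptstyle\mathcal{U}})$. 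Since $\vec{v}_\lle = -\vec{v}_\ell$, the restriction of $x$ to this strip inherits $\lle$-periodicity of some period $t_0\vec{v}_\lle$ with $t_0 \leq M$ from the first step. A second invocation of Proposition \ref{pps_period_semiplano} --- with $\ell,\mathcal{U},p,t'$ replaced by $\lle,\mathcal{T}',q,t_0$ --- then gives that $x|\sob{\mathcal{H}(\ell_{\scriptscriptstyle\mathcal{T}'})}$ is $\lle$-periodic, equivalently $\ell$-periodic since $\lle$ and $\ell$ span the same one-dimensional subspace. By choosing $\mathcal{T}'$ deeply enough, $\ell_{\scriptscriptstyle\mathcal{T}'}$ lies strictly below $\lle_{\scriptscriptstyle\mathcal{U}}$, so $\mathcal{H}(\lle_{\scriptscriptstyle\mathcal{U}}) \cup \mathcal{H}(\ell_{\scriptscriptstyle\mathcal{T}'}) = \bb{Z}^2$, and taking the least common multiple of the two periods (both parallel to $\vec{v}_\ell$) produces a single global $\ell$-period of $x$.

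The main obstacle is upgrading the uniform bound $t \leq M$ from translations $\mathcal{U}' \subset \mathcal{H}(\lle_{\scriptscriptstyle\mathcal{U}})$ to arbitrary translations of $\mathcal{U}$: the second invocation of Proposition \ref{pps_period_semiplano} a priori only produces a bound of the form $\max\{t_0, 2\varPhi_q(\lle,\mathcal{T})\}$ on $(\lle,\cdot,q)$-strips in $\mathcal{H}(\ell_{\scriptscriptstyle\mathcal{T}'})$, which need not be controlled by $M$. Closing this gap requires exploiting that the overlap $\mathcal{H}(\lle_{\scriptscriptstyle\mathcal{U}}) \cap \mathcal{H}(\ell_{\scriptscriptstyle\mathcal{T}'})$ is an infinite horizontal band on which both period structures coexist, so that a row-by-row gcd argument forces every lattice line parallel to $\ell$ to admit a period of length dividing $M$; the strip bound then follows. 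The ``similarly'' clause is obtained by running the whole argument with the roles of $(\ell,\mathcal{U},p,t')$ and $(\lle,\mathcal{T},q,t')$ interchanged.
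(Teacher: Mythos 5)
Your first step (two applications of Proposition \ref{pps_period_semiplano}, one for $\mathcal{U}$ and one for a translation $\mathcal{T}'$ of $\mathcal{T}$ placed inside $\mathcal{H}(\lle_{\scriptscriptstyle\mathcal{U}})$, yielding that $x$ is globally $\ell$-periodic) is exactly how the paper begins. The problem is the part you yourself flag as the main obstacle: the uniform bound $t\leq\max\{t',2\varPhi_{p}(\ell,\mathcal{U})\}$ for \emph{arbitrary} translations of $\mathcal{U}$, in particular for strips lying beyond $\lle_{\scriptscriptstyle\mathcal{U}}$ on the $\ell^{(-)}$ side. Your proposed resolution --- that on the overlap band $\mathcal{H}(\lle_{\scriptscriptstyle\mathcal{U}})\cap\mathcal{H}(\ell_{\scriptscriptstyle\mathcal{T}'})$ both period structures coexist, so a row-by-row gcd argument forces every line parallel to $\ell$ to admit a period dividing $M$ --- does not work. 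The rows of the band already lie in $\mathcal{H}(\lle_{\scriptscriptstyle\mathcal{U}})$ and carry the $M$-bound from the first application, whereas the rows strictly beyond $\lle_{\scriptscriptstyle\mathcal{U}}$ carry only the bound $\max\{t_0,2\varPhi_{q}(\lle,\mathcal{T})\}$ coming from the second application; no gcd computation performed inside the band transfers any information to rows outside it, and the two bounds are a priori unrelated. Moreover, even if each individual row beyond $\lle_{\scriptscriptstyle\mathcal{U}}$ had period at most $M$, the statement requires a \emph{single} period $t\vec{v}_{\ell}$, $t\leq M$, valid on the whole $(\ell,\mathcal{U}',p)$-strip, which a per-row bound does not yield (the common period could only be controlled by a least common multiple). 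So the crucial quantitative step is missing.

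For comparison, the paper closes this gap by a different mechanism: it considers the maximal $\kappa'$ such that all strips $F+\kappa u$, $0\leq\kappa\leq\kappa'$, carry a period bounded by $\max\{t',2\varPhi_{p}(\ell,\mathcal{U})\}$; by Lemma \ref{lem_exist_faixa_period} and Proposition \ref{pps_period_semiplano}, $x\notin\mathcal{M}(\ell,\mathcal{U}+\kappa u)$ for every $\kappa>\kappa'$. Using the global $\ell$-periodicity already obtained, the Pigeonhole Principle produces two indices $i>I\geq\kappa'$ with $x|_{F+iu}=x|_{F+Iu}$, and the uniqueness of extensions (Lemma \ref{lem_exist_x}, available precisely because $x\notin\mathcal{M}(\ell,\mathcal{U}+iu)$) propagates this equality downward line by line, forcing $I=\kappa'$; hence the strip at level $i$ inherits the bounded period of the strip at level $\kappa'$, and Proposition \ref{pps_period_semiplano} then contradicts the maximality of $\kappa'$. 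Some argument of this kind --- exploiting non-membership in $\mathcal{M}(\ell,\cdot)$ above level $\kappa'$ together with the repetition of whole strips --- is what your sketch would need to supply.
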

\begin{proof}
Initially, it follows from Proposition \ref{pps_period_semiplano} applied to each one of the sets $\mathcal{U},\mathcal{T}' \in$ $\mathcal{F}^{V\!ol}_{C}$ that $x$ is $\ell$-periodic, where $\mathcal{T}'$ is a translation of $\mathcal{T}$ with $\mathcal{T}' \subset \mathcal{H}(\lle_{\scriptscriptstyle\mathcal{U}})$. So let $u \in$ $(\bb{Z}^2)^*$ be such that $\ell^{(-)}+u = \ell$. Let $Q_{\mathcal{U}}$ be the set of $\kappa \in \bb{Z}_+$ such that the restriction of $x \in X_{\eta}$ to the $(\ell,\mathcal{U}+\kappa u,p)$-strip is periodic of period $\tau\vec{v}_{\ell}$ for some $\tau \leq \max\{t',2\varPhi_{p}(\ell,\mathcal{U})\}$. Suppose, by contradiction, that $Q_{\mathcal{U}}$ does not coincide with $\bb{Z}_+$. Let $\kappa' \in Q_{\mathcal{U}}$ denote the largest integer for which $i \in Q_{\mathcal{U}}$ for every $0 \leq i \leq \kappa'$. Thus, Lemma \ref{lem_exist_faixa_period} and Proposition~\ref{pps_period_semiplano} imply that $x \not\in \mathcal{M}(\ell,\mathcal{U}+\kappa u)$ for all integer $\kappa > \kappa'$. Since $x$ is $\ell$-periodic, the Pigeonhole Principle ensures that there are integers $i > I \geq \kappa'$ such that $x|\sob{F+iu} = x|\sob{F+Iu}$, where $F \subset \bb{Z}^2$ denotes the $(\ell,\mathcal{U},p)$-strip. We can further assume that $I \geq \kappa'$ is the smallest integer with this property. Since $x|\sob{F+iu} = x|\sob{F+Iu} = (T^{(I-i)u}x)|\sob{F+iu}$ and $x \not\in \mathcal{M}(\ell,\mathcal{U}+iu)$, from Lemma~\ref{lem_exist_x}, we get $$x|\sob{(F+iu) \cup \ell_{(\mathcal{U}+iu)}} = (T^{(I-i)u}x)|\sob{(F+iu) \cup \ell_{(\mathcal{U}+iu)}}.$$ Since $F+(i-1)u \subset \ell_{(\mathcal{U}+iu)} \cup (F+iu)$, one has $x|\sob{F+(i-1)u} = x|\sob{F+(I-1)u}$ and so $I = \kappa'$ (by minimality of $I$). Hence, the restriction of $x$ to the $(\ell,\mathcal{U}+iu,p)$-strip is periodic of period $\tau\vec{v}_{\ell}$ for some $\tau \leq \max\{t',2\varPhi_{p}(\ell,\mathcal{U})\}$. Therefore, by applying Proposition \ref{pps_period_semiplano}, one contradicts the maximality of $\kappa' \in Q_{\mathcal{U}}$.
\end{proof}

\section{Minimal lower complexity generating sets}
\label{sec5}

In this section, we highlight a class of $\eta$-generating sets whose existence comes naturally from a strong condition on the complexity. Although it is a concept deri- ved immediately from our key hypothesis, whose fundamental properties could be absorbed in technical arguments, our main goal in introducing this notion is to sim- plify the statements of results and the expositions of proofs that will follow.

\begin{definition}\label{dfn_mlc_generating}
Given $\eta \in A^{\bb{Z}^2}$, an $\eta$-generating set $\mathcal{S} \in \mathcal{F}_{C}$ is said to be a minimal lower complexity (mlc) $\eta$-generating set if	
\begin{enumerate}
	\item[(i)] $P_{\eta}(\mathcal{S}) \leq \frac{1}{2}|\mathcal{S}|+|A|-1$,
	\item[(ii)] if $\mathcal{T} \subsetneq \mathcal{S}$ is convex and nonempty, then $P_{\eta}(\mathcal{T}) > \frac{1}{2}|\mathcal{T}|+|A|-1$.
\end{enumerate}
\end{definition}

The existence of mlc $\eta$-generating sets is a straightforward consequence of our alphabetical bound assumption for complexity. As a matter of fact, if there exists a set $\mathcal{U} \in \mathcal{F}_{C}$ such that $P_{\eta}(\mathcal{U}) \leq \frac{1}{2}|\mathcal{U}|+|A|-1$, then any convex set $\mathcal{S} \subset \mathcal{U}$ that is minimal among all convex sets $\mathcal{T} \subset \mathcal{U}$ fulfilling $P_{\eta}(\mathcal{T}) \leq \frac{1}{2}|\mathcal{T}|+|A|-1$ is an mlc $\eta$-generating set. The fact that $\frac{1}{2}+|A|-1 < |A| = P_{\eta}(\{g\})$ for all $g \in \bb{Z}^2$ ensures that $\mathcal{S}$ has at least two points. In particular, if $\eta$ is an aperiodic configuration, since $\frac{1}{2}|\mathcal{S}|+|A|-1 \leq |\mathcal{S}|+|A|-2$, from Corollary \ref{lem_per_convnulo}, we get $\mathcal{S} \in \mathcal{F}^{V\!ol}_{C}$.

Due to condition (ii) of Definition \ref{dfn_mlc_generating}, if $\mathcal{T} \subsetneq \mathcal{S}$ is convex and nonempty, it is clear that $P_{\eta}(\mathcal{S}) - P_{\eta}(\mathcal{T}) < \frac{1}{2}|\mathcal{S} \backslash \mathcal{T}|$, which yields
\begin{equation}\label{eq_mlc-generating}
P_{\eta}(\mathcal{S})-P_{\eta}(\mathcal{T}) \leq \Big\lceil \frac{1}{2}\big|\mathcal{S} \backslash \mathcal{T}\big| \Big\rceil-1,
\end{equation}
where as usual $\lceil \, \cdot \, \rceil$ denotes the ceiling function.

\begin{lemma}\label{lem_cardinal_mlc}
For $\eta \in A^{\bb{Z}^2}$,  if $\mathcal{S} \in \mathcal{F}^{V\!ol}_{C}$ is an mlc $\eta$-generating set, then $|\ell_{\scriptscriptstyle\mathcal{S}} \cap \mathcal{S}| \geq 3$ whenever $\ell \in$ $\bb{G}_1$ is a one-sided nonexpansive direction on $X_{\eta}$.
\end{lemma}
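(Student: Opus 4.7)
The plan is to argue by contradiction. From Remark~\ref{Obs_irracional_line}, the hypothesis that $\ell$ is one-sided nonexpansive on $X_\eta$ together with the fact that $\mathcal{S}\in\mathcal{F}^{V\!ol}_{C}$ is an $\eta$-generating set already forces $|\ell_{\scriptscriptstyle\mathcal{S}}\cap\mathcal{S}|\geq 2$, so the only case left to exclude is $|\ell_{\scriptscriptstyle\mathcal{S}}\cap\mathcal{S}|=2$. Assume this, and write $\ell_{\scriptscriptstyle\mathcal{S}}\cap\mathcal{S}=\{g_0,g_1\}$; since these are the only two lattice points on the edge $\ell_{\scriptscriptstyle\mathcal{S}}\cap\mathcal{S}$, elementary lattice geometry forces $g_1-g_0=\pm\vec{v}_\ell$.

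The first step is to extract a joint-generation property from the mlc hypothesis. Set $\mathcal{T}:=\mathcal{S}\setminus\ell_{\scriptscriptstyle\mathcal{S}}$; this is a proper, nonempty, convex subset of $\mathcal{S}$ (nonempty because $\mathcal{S}$ has positive area and hence contains lattice points off $\ell_{\scriptscriptstyle\mathcal{S}}$; convex because $\ell_{\scriptscriptstyle\mathcal{S}}$ supports $\conv(\mathcal{S})$, so removing $\{g_0,g_1\}$ merely strips off a boundary edge). Applying~\eqref{eq_mlc-generating} to this $\mathcal{T}$ yields
\[
P_\eta(\mathcal{S})-P_\eta(\mathcal{S}\setminus\ell_{\scriptscriptstyle\mathcal{S}})\leq\lceil 2/2\rceil-1=0,
\]
and monotonicity gives the reverse inequality, so $P_\eta(\mathcal{S})=P_\eta(\mathcal{S}\setminus\ell_{\scriptscriptstyle\mathcal{S}})$. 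Equivalently, the restriction map $L(\mathcal{S},\eta)\to L(\mathcal{S}\setminus\ell_{\scriptscriptstyle\mathcal{S}},\eta)$ is a bijection, so every $(\mathcal{S}\setminus\ell_{\scriptscriptstyle\mathcal{S}})$-configuration of $\eta$ admits a unique extension to an $\mathcal{S}$-configuration.

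The second step turns this joint generation into one-sided expansiveness of $\ell$, contradicting the hypothesis. Take $x,y\in X_\eta$ with $x|_{\mathcal{H}(\ell)}=y|_{\mathcal{H}(\ell)}$. For each $g\in\ell^{(-)}\cap\bb{Z}^2$, set $u:=g_0-g\in\bb{Z}^2$; then the tangent line $\ell_{\scriptscriptstyle(\mathcal{S}-u)}=\ell_{\scriptscriptstyle\mathcal{S}}-u$ is parallel to $\ell$ and contains $g\in\ell^{(-)}$, so $\ell_{\scriptscriptstyle(\mathcal{S}-u)}=\ell^{(-)}$, and consequently $(\mathcal{S}-u)\setminus\ell^{(-)}\subset\mathcal{H}(\ell^{(-)})\setminus\ell^{(-)}\subset\mathcal{H}(\ell)$. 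On this subset $x$ and $y$ agree, and the bijection from Step~1, transported to the translate $\mathcal{S}-u$ by shift-invariance of complexity, forces $x|_{\mathcal{S}-u}=y|_{\mathcal{S}-u}$; in particular, $x_g=x_{g_0-u}=y_{g_0-u}=y_g$. As $g$ ranges over $\ell^{(-)}\cap\bb{Z}^2$ this gives $x|_{\mathcal{H}(\ell^{(-)})}=y|_{\mathcal{H}(\ell^{(-)})}$, and iterating with $\ell$ replaced successively by $\ell^{(-)},\ell^{(--)},\ldots$ yields $x=y$, the desired contradiction.

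The main delicacy I anticipate is the geometric bookkeeping in Step~2: one must check that $u=g_0-g$ really places the tangent line of $\mathcal{S}-u$ exactly on $\ell^{(-)}$ (rather than on some further line), and that successive iterations actually sweep out the whole lattice below $\mathcal{H}(\ell)$. Both reduce to the identity $g_1-g_0=\pm\vec{v}_\ell$ extracted at the start, which matches the primitive lattice spacing on every line parallel to $\ell$. Structurally, the argument is a two-point analogue of Lemma~\ref{lem_direxp}, with joint generation of $\{g_0,g_1\}$ taking the place of single-point generation.
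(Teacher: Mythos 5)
Your proposal is correct, but it reaches the conclusion by a more hands-on route than the paper. The shared core is the application of \eqref{eq_mlc-generating} to $\mathcal{T}=\mathcal{S}\backslash\ell_{\scriptscriptstyle\mathcal{S}}$; the difference is how one-sided nonexpansiveness of $\ell$ is converted into a complexity statement. The paper simply invokes \eqref{desig_card}: nonexpansiveness gives some $x\in\mathcal{M}(\ell,\mathcal{S})$, hence the strict drop $P_{\eta}(\mathcal{S}\backslash\ell_{\scriptscriptstyle\mathcal{S}})<P_{\eta}(\mathcal{S})$, and then \eqref{eq_mlc-generating} yields $1\leq\lceil\frac{1}{2}|\ell_{\scriptscriptstyle\mathcal{S}}\cap\mathcal{S}|\rceil-1$, i.e.\ $|\ell_{\scriptscriptstyle\mathcal{S}}\cap\mathcal{S}|\geq 3$ in one stroke, with no case distinction (the values $1$ and $2$ are excluded simultaneously). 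You instead prove the contrapositive by hand: in the case $|\ell_{\scriptscriptstyle\mathcal{S}}\cap\mathcal{S}|=2$ the equality $P_{\eta}(\mathcal{S})=P_{\eta}(\mathcal{S}\backslash\ell_{\scriptscriptstyle\mathcal{S}})$ gives unique extension across the edge, and translating $\mathcal{S}$ so that its $\ell$-edge sits on $\ell^{(-)}$ and sweeping successive lattice lines shows $\ell$ would be one-sided expansive, while the case $|\ell_{\scriptscriptstyle\mathcal{S}}\cap\mathcal{S}|=1$ is disposed of separately via Remark~\ref{Obs_irracional_line} (i.e.\ Lemma~\ref{lem_direxp}). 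This is in effect a two-point analogue of Lemma~\ref{lem_direxp}, re-proving from scratch the content of the discussion preceding \eqref{desig_card} (and of Lemma~\ref{lem_exist_x}); what it buys is a self-contained, elementary argument that does not lean on the $\mathcal{M}(\ell,\mathcal{U})$ machinery, at the cost of length and the extra case split. Two small remarks: the identity $g_1-g_0=\pm\vec{v}_{\ell}$ that you flag as the main delicacy is never actually needed --- your argument only uses that $g=g_0-u$ lies on $\ell^{(-)}$ and that $\mathcal{H}(\ell^{(-)})\backslash\ell^{(-)}\subset\mathcal{H}(\ell)$, which holds by the minimality in Notation~\ref{not_nextline}; and the step from $x|\sob{(\mathcal{S}-u)\backslash\ell^{(-)}}=y|\sob{(\mathcal{S}-u)\backslash\ell^{(-)}}$ to $x|\sob{\mathcal{S}-u}=y|\sob{\mathcal{S}-u}$ tacitly uses that $x|\sob{\mathcal{S}-u},\,y|\sob{\mathcal{S}-u}$ belong to the translate of $L(\mathcal{S},\eta)$, which is legitimate because $\mathcal{S}$ is finite and $X_{\eta}$ is the orbit closure --- the same fact the paper uses implicitly throughout.
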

\begin{proof}
From (\ref{desig_card}), one has $P_{\eta}(\mathcal{S} \backslash \ell_{\scriptscriptstyle\mathcal{S}}) < P_{\eta}(\mathcal{S})$ and so (\ref{eq_mlc-generating}) applied to $\mathcal{T} = \mathcal{S} \backslash \ell_{\scriptscriptstyle\mathcal{S}}$ pro\-vides $|\ell_{\scriptscriptstyle\mathcal{S}} \cap \mathcal{S}| \geq 3$.
\end{proof}

\begin{remark}\label{obs_exist_arestas_mcbc}
Given $\eta \in A^{\bb{Z}^2}$ aperiodic, suppose $P_{\eta}(\mathcal{U}) \leq \frac{1}{2}|\mathcal{U}|+|A|-1$ for some quasi-regular set $\mathcal{U} \in \mathcal{F}^{V\!ol}_{C}$. If $\ell \in \bb{G}_1$ is a nonexpansive line on $X_{\eta}$, Proposition~\ref{pps_lp-balanceado} and Corollary~\ref{cor_reta_antiparalela} imply that the oriented lines $\ell, \lle \in \bb{G}_1$ are both one-sided non\-expansive directions on $X_{\eta}$. In particular,  if $\mathcal{S} \in \mathcal{F}^{V\!ol}_{C}$ is an mlc $\eta$-generating set, then $\mathcal{S}$ is $(\ell,p)$-balanced with $p = |\ell_{\scriptscriptstyle\mathcal{S}} \cap \mathcal{S}|-1$ or $(\lle,q)$-balanced with $q = |\lle_{\scriptscriptstyle\mathcal{S}} \cap \mathcal{S}|-1$, since, clearly, conditions (i) and (ii) of Proposition \ref{pps_lp-balanceado} hold in one of the two cases.
\end{remark}

Let $\ell \in \bb{G}_1$ be a one-sided nonexpansive direction on $X_{\eta}$ and suppose $\mathcal{S} \in \mathcal{F}^{V\!ol}_{C}$ is an mlc $\eta$-generating set such that $|\ell_{\scriptscriptstyle\mathcal{S}} \cap \mathcal{S}| \leq |\lle_{\scriptscriptstyle\mathcal{S}} \cap \mathcal{S}|$ -- in particular, an $(\ell,p)$-balan\-ced set with $p := |\ell_{\scriptscriptstyle\mathcal{S}} \cap \mathcal{S}|-1$. Since $1 \leq |A^{\ell,p}(\mathcal{S},x)| \leq$ $\lceil \frac{1}{2}|\ell_{\scriptscriptstyle\mathcal{S}} \cap \mathcal{S}| \rceil-1$ for every $x \in \mathcal{M}(\ell,\mathcal{S})$ (see (\ref{desig_card})) and $\lceil \frac{1}{2}|\ell_{\scriptscriptstyle\mathcal{S}} \cap \mathcal{S}| \rceil \leq p$, for each $x \in \mathcal{M}(\ell,\mathcal{S})$, there is a positive integer $p_x \leq p$ such that
\begin{equation}\label{eq_iguald_baixa_compl}
\Big\lceil \frac{1}{2}\big|\ell_{\scriptscriptstyle\mathcal{S}} \cap \mathcal{S}\big| \Big\rceil-1 = p_x+\big|A^{\ell,p}(\mathcal{S},x)\big|-2,
\end{equation}
which yields $P_{\eta}(\mathcal{S})-P_{\eta}(\mathcal{S} \backslash \ell_{\scriptscriptstyle\mathcal{S}}) \leq p_x+|A^{\ell,p_x}(\mathcal{S},x)|-2$. Moreover, from (\ref{eq_iguald_baixa_compl}) it fol\-lows that
\begin{equation}\label{maineq_sec4}
2\varPhi_{p}(\ell,\mathcal{S}) \leq 2\Big\lceil \frac{1}{2}\big|\ell_{\scriptscriptstyle\mathcal{S}} \cap \mathcal{S}\big| \Big\rceil-2 \leq \big|\ell_{\scriptscriptstyle\mathcal{S}} \cap \mathcal{S}\big|-1.
\end{equation}

From the previous discussion, Lemmas \ref{lem_exist_sfaixa_period} and \ref{lem_extens_semifaixa} can be restated as follows. Recall that, for a rational oriented line $\ell$, the vector $\vec{v}_{\lle} = -\vec{v}_{\ell}$ (of minimal norm) is antiparallel to $\ell$ and parallel to $\lle$.  

\begin{proposition}\label{pps_cap3_baixa_compl}
Given $\eta \in A^{\bb{Z}^2}$,  if there exists an mlc $\eta$-generating set $\mathcal{S} \in \mathcal{F}^{V\!ol}_{C}$ and $\ell \in \bb{G}_1$ is a one-sided nonexpansive direction on $X_{\eta}$ with $|\ell_{\scriptscriptstyle\mathcal{S}} \cap \mathcal{S}| \leq |\lle_{\scriptscriptstyle\mathcal{S}} \cap \mathcal{S}|$, then $\mathcal{S}$ is an $(\ell,p)$-balanced set with $p = |\ell_{\scriptscriptstyle\mathcal{S}} \cap \mathcal{S}|-1$ and satisfies the following conditions.
\begin{enumerate}
	\item[(i)] For $x \in \mathcal{M}_{a\pm}(\ell,\mathcal{S})$, with $a \in \bb{Z}$, the restriction of $x$ to the $(\ell,\mathcal{S},p)$-half-strip $F^{\pm}(a+p)$ is periodic of period $\pm t\vec{v}_{\ell}$ for some integer $t \leq \lceil \frac{1}{2}|\ell_{\scriptscriptstyle\mathcal{S}} \cap \mathcal{S}| \rceil-1$;
	\item[(ii)]	If the restriction of $x \in X_{\eta}$ to the $(\ell,\mathcal{S},p)$-half-strip $F^{\pm}(a)$ is periodic of period $\pm t'\vec{v}_{\ell}$, with $t' \in$ $\bb{N}$, then the restriction of $x$ to $(\ell_{\scriptscriptstyle\mathcal{S}} \cup F^{\pm})(a)$ is periodic of period $\pm t\vec{v}_{\ell}$, where $t = t'$ if $x \not\in \mathcal{M}_{a\pm}(\ell,\mathcal{S})$ and $t \leq$ $2\lceil \frac{1}{2}|\ell_{\scriptscriptstyle\mathcal{S}} \cap \mathcal{S}| \rceil-2$ otherwise.
\end{enumerate}
\end{proposition}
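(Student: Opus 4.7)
The plan is to verify the two defining conditions of an $(\ell,p)$-balanced set for $\mathcal{S}$ and then read (i) and (ii) off from the half-strip versions already proved, namely Lemma \ref{lem_exist_sfaixa_period}, Lemma \ref{rem_equival_lema} and Lemma \ref{lem_extens_semifaixa}. The key leverage is the alphabetical bound \eqref{eq_mlc-generating} for an mlc $\eta$-generating set, which controls $P_{\eta}(\mathcal{S})-P_{\eta}(\mathcal{S}\backslash\ell_{\scriptscriptstyle\mathcal{S}})$ by $\lceil \tfrac{1}{2}|\ell_{\scriptscriptstyle\mathcal{S}}\cap\mathcal{S}|\rceil-1$, which is essentially the bound appearing in the conclusions.

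First I would argue that $\mathcal{S}$ is $(\ell,p)$-balanced with $p=|\ell_{\scriptscriptstyle\mathcal{S}}\cap\mathcal{S}|-1$. Corollary \ref{cor_reta_antiparalela} makes $\lle$ one-sided nonexpansive as well (via the antiparallel set obtained from Proposition \ref{pps_lp-balanceado}), hence Lemma \ref{lem_cardinal_mlc} gives $|\ell_{\scriptscriptstyle\mathcal{S}}\cap\mathcal{S}|,|\lle_{\scriptscriptstyle\mathcal{S}}\cap\mathcal{S}|\geq 3$, so that $\ell_{\scriptscriptstyle\mathcal{S}}\cap\mathcal{S}$ and $\lle_{\scriptscriptstyle\mathcal{S}}\cap\mathcal{S}$ are genuine antiparallel edges of $\mathcal{S}$. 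Because $|\ell_{\scriptscriptstyle\mathcal{S}}\cap\mathcal{S}|\leq|\lle_{\scriptscriptstyle\mathcal{S}}\cap\mathcal{S}|$, Remark \ref{rem_cap2_arest_paral} yields condition (i) of Definition \ref{dfn_lp_balanceado}: any parallel line $\ell'\neq\ell_{\scriptscriptstyle\mathcal{S}}$ meeting $\conv(\mathcal{S})$ and containing a lattice point satisfies $|\ell'\cap\mathcal{S}|\geq|\ell_{\scriptscriptstyle\mathcal{S}}\cap\mathcal{S}|-1=p$. Condition (ii) is exactly the content of equality \eqref{eq_iguald_baixa_compl} above: for each $x\in\mathcal{M}(\ell,\mathcal{S})$ with $|A^{\ell,p}(\mathcal{S},x)|>1$, set $p_x:=\lceil\tfrac{1}{2}|\ell_{\scriptscriptstyle\mathcal{S}}\cap\mathcal{S}|\rceil+1-|A^{\ell,p}(\mathcal{S},x)|$; then $1\leq p_x\leq p$ and, using \eqref{eq_mlc-generating}, $P_{\eta}(\mathcal{S})-P_{\eta}(\mathcal{S}\backslash\ell_{\scriptscriptstyle\mathcal{S}})\leq p_x+|A^{\ell,p_x}(\mathcal{S},x)|-2$ since condition (i) already forces $A^{\ell,p_x}(\mathcal{S},x)=A^{\ell,p}(\mathcal{S},x)$.

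For conclusion (i), pick $x\in\mathcal{M}_{a\pm}(\ell,\mathcal{S})$. If $|A^{\ell,p}_{s\pm}(\mathcal{S},x)|=1$ for all $s$ beyond some threshold then the restriction is trivially $\vec{v}_{\ell}$-periodic, so assume $|A^{\ell,p}_{s\pm}(\mathcal{S},x)|>1$ infinitely often; Remark \ref{rem_balan_semifaixa1} then supplies $p_x\leq p$ with $P_{\eta}(\mathcal{S})-P_{\eta}(\mathcal{S}\backslash\ell_{\scriptscriptstyle\mathcal{S}})\leq p_x+|A^{\ell,p_x}_{a\pm}(\mathcal{S},x)|-2=:m_x$. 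Lemma \ref{lem_exist_sfaixa_period} now gives that $x|_{F^{\pm}(a+m_x)}$ is periodic of period $\pm t\vec{v}_{\ell}$ with $t\leq m_x$. Choosing $p_x$ as in the previous paragraph (replacing $|A^{\ell,p}|$ by the half-strip $|A^{\ell,p}_{a\pm}|$), the same computation shows $m_x\leq\lceil\tfrac{1}{2}|\ell_{\scriptscriptstyle\mathcal{S}}\cap\mathcal{S}|\rceil-1\leq p$, whence $F^{\pm}(a+p)\subset F^{\pm}(a+m_x)$ and the period bound transfers to the smaller half-strip.

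For conclusion (ii), the two cases are separated by membership in $\mathcal{M}_{a\pm}(\ell,\mathcal{S})$. If $x\notin\mathcal{M}_{a\pm}(\ell,\mathcal{S})$, then applying Lemma \ref{rem_equival_lema} with $y:=T^{\pm t'\vec{v}_{\ell}}x$, which agrees with $x$ on $F^{\pm}(a)$ by hypothesis, extends the equality to $(\ell_{\scriptscriptstyle\mathcal{S}}\cup F^{\pm})(a)$ and gives $t=t'$. If $x\in\mathcal{M}_{a\pm}(\ell,\mathcal{S})$, Lemma \ref{lem_extens_semifaixa} yields a period $\pm t\vec{v}_{\ell}$ with $t\leq 2\varPhi_{p}(\ell,\mathcal{S})$, and then inequality \eqref{maineq_sec4} closes the argument: $2\varPhi_{p}(\ell,\mathcal{S})\leq 2\lceil\tfrac{1}{2}|\ell_{\scriptscriptstyle\mathcal{S}}\cap\mathcal{S}|\rceil-2$. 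The only delicate point in the whole argument is the bookkeeping in (i) that certifies $m_x\leq p$ so that the half-strip of periodicity already contains $F^{\pm}(a+p)$; everything else is a direct citation of the lemmas of Section \ref{sec4}.
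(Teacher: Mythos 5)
Your proposal is correct and follows essentially the same route as the paper, which states this proposition precisely as a restatement of Lemmas \ref{lem_exist_sfaixa_period}, \ref{rem_equival_lema} and \ref{lem_extens_semifaixa} after the discussion establishing that $\mathcal{S}$ is $(\ell,p)$-balanced via Remark \ref{rem_cap2_arest_paral} together with \eqref{eq_mlc-generating}, \eqref{eq_iguald_baixa_compl} and \eqref{maineq_sec4}; your explicit choice $p_x=\lceil\frac{1}{2}|\ell_{\scriptscriptstyle\mathcal{S}}\cap\mathcal{S}|\rceil+1-|A^{\ell,p}(\mathcal{S},x)|$ is exactly the paper's. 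Two removable blemishes: the appeal to Corollary \ref{cor_reta_antiparalela} via Proposition \ref{pps_lp-balanceado} to make $\lle$ nonexpansive is not licensed by the hypotheses here (no aperiodicity or quasi-regular set is assumed) and is unnecessary, since $|\lle_{\scriptscriptstyle\mathcal{S}}\cap\mathcal{S}|\geq|\ell_{\scriptscriptstyle\mathcal{S}}\cap\mathcal{S}|\geq 3$ already follows from the stated hypothesis and Lemma \ref{lem_cardinal_mlc} applied to $\ell$; and in conclusion (i) the case ``$|A^{\ell,p}_{s\pm}(\mathcal{S},x)|=1$ beyond some threshold'' is not literally trivial when $|A^{\ell,p}_{a\pm}(\mathcal{S},x)|>1$, since the threshold may exceed $a+p$ --- but your main computation never actually uses the ``infinitely often'' assumption (the hypothesis of Lemma \ref{lem_exist_sfaixa_period} is verified directly from \eqref{eq_mlc-generating} with the explicit $p_x$), so that sub-case is covered by the same argument and the case split can simply be dropped.
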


\section{$(\ell,\ell')$-periodic maximal $\mathcal{K}$-configurations}
\label{sec6}

If $\eta \in A^{\bb{Z}^2}$ is an aperiodic configuration with $P_{\eta}(n,k) \leq \frac{1}{2}nk$ for some $n,k \in \bb{N}$, in \cite{van} the authors proved  that there always exists another aperiodic configuration $\varphi \in A^{\bb{Z}^2}$ whose restriction to a largest convex set $\mathcal{K}$ is doubly periodic, say, both $\ell$-periodic and $\ell'$-periodic. We will prove the same fact in the alphabetical context (see Proposition \ref{pps_exist_maximal_conf}). As the  proof is quite similar to the one of Section 5.2 of \cite{van}, we will give only the main steps. Such an aperiodic configuration $ \varphi $ is called an $(\ell,\ell')$-periodic maximal $\mathcal{K}$-configuration; see the next definition for accuracy. 

Roughly speaking, for an mlc $\eta$-generating set, its complexity is bounded from above by the sum of the complexity of any proper convex subset and the cardinality of the complement of this subset, as one may already see in (\ref{eq_mlc-generating}). The key point in the proof of Theorem \ref{main_result} is to contradict such a property by taking into account a convenient proper convex subset. In order to estimate the complexity of this subset with respect to $\varphi$, it is useful to relate it to the complexity of a suitable small part. This essentially can be done due to the fact that the periods of an $(\ell,\ell')$-peri- odic maximal $\mathcal{K}$-configuration $\varphi $ can be bounded by expressions depending on the cardinality of the edges of an mlc $\eta$-generating set (see Proposition \ref{afm_principal}).

\begin{definition}\label{ll'maximalperconf}
Let $\ell, \ell' \in \bb{G}_1$ be rational oriented lines, with $\ell \cap \ell' = \{0\}$, such that $\vec{v}_{\ell'} \in \mathcal{H}(\ell)$. Let $\mathcal{K} \subset$ $\bb{Z}^2$ be a convex set with semi-infinite edges parallel to $\ell$ and $\ell'$. An aperiodic configuration $\varphi \in A^{\bb{Z}^2}$ is said to be an $(\ell,\ell'\!)$-periodic maximal $\mathcal{K}$-con- figuration if (i) $\varphi|\sob{\mathcal{K}}$ is doubly periodic with periods in $\ell \cap (\bb{Z}^2)^*$ and $\ell' \cap (\bb{Z}^2)^*$, and (ii) $\mathcal{K}$ is maximal among all convex sets (positively oriented) with semi-infinite edges parallel to $\ell$ and $\ell'$ that satisfy condition (i).
\end{definition}

In the above definition, $\vec{v}_{\ell'} \in \mathcal{H}(\ell)$ means that (with respect to the orientation inhered from the boundary of $\conv \, (\mathcal{K})$) the edge parallel to $\ell$ precedes the edge parallel to $\ell'$. In particular, only $(\ell,\mathcal{S},p)$-half-strips of the form $F^-(a)$, $a \in \bb{Z}$, and $(\ell'\!,\mathcal{S}'\!,q')$-half-strips of the form $F^+(a')$, $a' \in \bb{Z}$, may be contained in $\mathcal{K}$ (see (\ref{def_semi_faixa}) to recall the notion of half strips).

From now on, whenever we mention an $(\ell,\ell'\!)$-periodic maximal $\mathcal{K}$-configuration, we are supposing that $\ell, \ell' \in \bb{G}_1$ are rational oriented lines with $\ell \cap \ell' = \{0\}$ an $\vec{v}_{\ell'} \in \mathcal{H}(\ell)$, as well as that $\mathcal{K} $ is a convex set with semi-infinite edges parallel to $\ell$ and $\ell'$.

Note that, if $\eta \in A^{\bb{Z}^2}$ is an $(\ell,\ell')$-periodic maximal $\mathcal{K}$-configuration, the maximality of $\mathcal{K}$ and Lemma \ref{lem_dir_exp} imply that $\ell,\ell' \in \bb{G}_1$ are one-sided nonexpansive directions on $X_{\eta}$.


\begin{definition}
For $\mathcal{U} \in \mathcal{F}^{V\!ol}_{C}$, a convex set $\mathcal{T} \subset \bb{Z}^2$ is said to be weakly $E(\mathcal{U})$-enve\-loped if, for every edge $\varpi \in E(\mathcal{T})$, there exists an edge $w \in E(\mathcal{U})$ parallel to $\varpi$ with $|w \cap \mathcal{U}| \leq |\varpi \cap \mathcal{T}|$. A set $\mathcal{T} \subset \bb{Z}^2$ weakly $E(\mathcal{U})$-enveloped verifying $|E(\mathcal{T})| = |E(\mathcal{U})|$ is said to be $E(\mathcal{U})$-enveloped.
\end{definition}

It is clear that every set $\mathcal{U} \in \mathcal{F}^{V\!ol}_{C}$ is $E(\mathcal{U})$-enveloped.

\begin{proposition}\label{pps_exist_maximal_conf}
Let $\eta \in A^{\bb{Z}^2}$ be aperiodic and suppose there exist a quasi-regular set $\mathcal{U} \in \mathcal{F}^{V\!ol}_{C}$ such that $P_{\eta}(\mathcal{U}) \leq \frac{1}{2}|\mathcal{U}|+|A|-1$. Let $\ell \in \bb{G}_1$ be a one-sided nonexpan- sive direction on $X_{\eta}$ and $\mathcal{S} \in \mathcal{F}^{V\!ol}_{C}$ be an mlc $\eta$-generating set such that $|\ell^{}_{\scriptscriptstyle\mathcal{S}} \cap \mathcal{S}| \leq$ $|\lle^{}_{\scriptscriptstyle\mathcal{S}} \cap \mathcal{S}|$. Then, one may always obtain a rational oriented line  $\ell' \in \bb{G}_1$ and an in- finite convex region $\mathcal{K} \subset \bb{Z}^2$ for which there exists an $(\ell,\ell'\!)$-periodic maximal  $\mathcal{K}$-con- figuration  $\varphi \in X_{\eta}$.
\end{proposition}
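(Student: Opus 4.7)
The plan follows the Cyr--Kra strategy and divides into three stages. \emph{Stage~1}: using the one-sided nonexpansiveness of $\ell$ together with the $(\ell,p)$-balanced set $\mathcal{S}$ (guaranteed by Remark \ref{obs_exist_arestas_mcbc}), produce an auxiliary configuration $z \in X_\eta$ whose restriction to some half-plane with edge parallel to $\ell$ is $\ell$-periodic. \emph{Stage~2}: exhibit a second one-sided nonexpansive direction $\ell'$ on the subshift $X_z$, then reproduce the construction of Stage~1 inside $X_z$ to obtain $\varphi_0 \in X_z \subset X_\eta$ which is doubly periodic on a wedge-shaped convex region bounded by two semi-infinite edges parallel to $\ell$ and $\ell'$. \emph{Stage~3}: maximize this region via a compactness argument over $X_\eta$ to produce the desired $(\ell,\ell')$-periodic maximal $\mathcal{K}$-configuration~$\varphi$.

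For Stage~1, one-sided nonexpansiveness of $\ell$ provides $x,y \in X_\eta$ with $x|\sob{\mathcal{H}(\ell)} = y|\sob{\mathcal{H}(\ell)}$ but $x \neq y$. Shifting the pair along $\vec{v}_\ell$ and extracting an accumulation point in $X_\eta$, I may arrange that the disagreement lies on $\ell^{(-)} \cap \bb{Z}^2$ and persists under every $\vec{v}_\ell$-shift. Taking a translate $\mathcal{S}'$ of $\mathcal{S}$ with $\ell_{\scriptscriptstyle\mathcal{S}'} = \ell^{(-)}$, the contrapositive of Lemma \ref{lem_exist_x} places the pair in $\mathcal{M}(\ell,\mathcal{S}')$; Lemma \ref{lem_exist_faixa_period} then yields $\ell$-periodicity on the corresponding $(\ell,\mathcal{S}',p)$-strip, and Proposition \ref{pps_period_semiplano} extends the periodicity to the entire half-plane $\mathcal{H}(\lle_{\scriptscriptstyle\mathcal{S}'})$. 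Set $z := x$.

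For Stage~2, first note that $z$ must be aperiodic: if $z$ were $\bb{Z}^2$-periodic, then combining $\mathcal{S}$ with a $(\lle,q)$-balanced set supplied by Proposition \ref{pps_lp-balanceado} applied to $\lle$ (one-sided nonexpansive by Corollary \ref{cor_reta_antiparalela}) and invoking Proposition \ref{pps_period_global} would, after a limiting argument in $X_\eta$, transfer periodicity to $\eta$, contradicting its aperiodicity. Thus $X_z \subset X_\eta$ is infinite, and Boyle--Lind produces a one-dimensional nonexpansive subspace on $X_z$; by Proposition \ref{pps_perio_expan} applied to the $\ell$-periods of $z|\sob{\mathcal{H}(\lle_{\scriptscriptstyle\mathcal{S}'})}$, this subspace is not $\ell$. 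After appropriately orienting it as $\ell'$ so that $\vec{v}_{\ell'} \in \mathcal{H}(\ell)$ and $\ell'$ is one-sided nonexpansive on $X_z$ (possibly after a global reflection of the construction swapping the roles of $\ell,\lle$ or of $\ell',\lle'$), I reproduce Stage~1 inside $X_z$ to obtain $\varphi_0 \in X_z$ that is $\ell'$-periodic on a half-plane with edge parallel to $\ell'$. Choosing the accumulation point $\varphi_0$ so that the approximating shifts of $z$ converge \emph{into} the $\ell$-periodic half-plane of $z$ ensures that $\varphi_0$ is also $\ell$-periodic on some half-plane with edge parallel to $\ell$, and the intersection of the two half-planes is then a convex region $\mathcal{K}_0$ with two semi-infinite edges parallel to $\ell$ and $\ell'$ on which $\varphi_0$ is doubly periodic.

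For Stage~3, partially order pairs $(\mathcal{K},\varphi)$ by inclusion of $\mathcal{K}$, where $\mathcal{K}$ is a convex set with two semi-infinite edges parallel to $\ell,\ell'$ and $\varphi \in X_\eta$ is doubly periodic on $\mathcal{K}$ with periods in $\ell \cap (\bb{Z}^2)^*$ and $\ell' \cap (\bb{Z}^2)^*$. Along any chain, compactness of $X_\eta$ yields a limit $\varphi$ doubly periodic on the limiting $\mathcal{K}$, since double periodicity on each finite patch is a closed condition preserved under shift limits. A maximal pair thus produces the required $(\ell,\ell')$-periodic maximal $\mathcal{K}$-configuration, its aperiodicity following from maximality combined with the periodicity-transfer argument used in Stage~2 to rule out $\bb{Z}^2$-periodicity. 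The \emph{main obstacle} is Stage~2: ensuring that the $\ell$-periodicity of $z$ is truly inherited by $\varphi_0$ on a full half-plane rather than degenerating to a strip, and that the orientations of $\ell$ and $\ell'$ can always be arranged so that $\mathcal{K}$ has two genuinely semi-infinite edges. Both points hinge on careful tracking of shift limits combined with Propositions~\ref{pps_period_semiplano} and~\ref{pps_period_global}.
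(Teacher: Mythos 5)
Your Stage~2 contains the decisive gap, and it is precisely the point the paper's proof is built to overcome. The mechanism you propose for producing the second direction --- Boyle--Lind applied to $X_z$ --- cannot work, and your invocation of Lemma \ref{pps_perio_expan} is backwards. In the paper's version of your Stage~1 (using Proposition \ref{pps_period_global}, which is available because both an $(\ell,p)$- and a $(\lle,q)$-balanced set exist by Proposition \ref{pps_lp-balanceado} and Corollary \ref{cor_reta_antiparalela}), the configuration $x$ comes out $\ell$-periodic on all of $\bb{Z}^2$; then Lemma \ref{pps_perio_expan} says every line \emph{other than} $\ell$ is expansive on $X_z$, so the unique nonexpansive line Boyle--Lind yields on $X_z$ is $\ell$ itself --- you recover nothing new. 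If instead your $z$ is only $\ell$-periodic on a half-plane, Lemma \ref{pps_perio_expan} (which requires an actual period of the configuration) gives no information at all, and there is no reason the nonexpansive line on $X_z$ differs from $\ell$ or bears any useful relation to the half-plane of periodicity. In the paper, $\ell'$ is not found abstractly: it emerges from the geometry of the maximal $E(\mathcal{S})$-enveloped agreement regions $A_i$ between the $\ell$-periodic $x$ and approximating configurations $\varphi_i$ that differ from $x$ inside the strips $F_i$ (the direction of the slowest-growing edge of $\hat{A}_i$), and the one-sided nonexpansiveness of $\ell'$ together with the $\ell'$-periodicity of the limit $\varphi$ near the boundary are extracted from the maximality property (\ref{eq_existence_tau_lem}) via Remark \ref{rem_balan_semifaixa1}, Lemma \ref{lem_exist_sfaixa_period} and repeated use of Lemma \ref{lem_extens_semifaixa} to fill the gap between shifted half-strips. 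Your closing sentence acknowledges exactly this difficulty ("that the $\ell$-periodicity of $z$ is truly inherited by $\varphi_0$ on a full half-plane\dots") but offers no argument for it; ``choosing the accumulation point so that the approximating shifts converge into the $\ell$-periodic half-plane'' does not by itself produce a wedge on which the two periodicities coexist.

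Two further steps are also unsound as stated. First, your claim that $z$ doubly periodic would ``transfer periodicity to $\eta$'' is false in general: the orbit closure of an aperiodic configuration can contain doubly periodic points, so no contradiction arises this way. What is actually needed, and what the paper proves differently, is that $x|\sob{\mathcal{H}(\ell^{(-)})}$ is not doubly periodic; this follows because $x$ and $y$ agree on $\mathcal{H}(\ell)$ but differ at a point of $\ell^{(-)}$, so at least one of the two restrictions to $\mathcal{H}(\ell^{(-)})$ fails to be doubly periodic. Second, your Stage~3 maximization over pairs $(\mathcal{K},\varphi)$ along chains is not the maximality used in Definition \ref{ll'maximalperconf} (which fixes $\varphi$ and maximizes $\mathcal{K}$ for it), and a chain limit can degenerate: the limiting region may become a half-plane or all of $\bb{Z}^2$ with a doubly periodic limit configuration, which is not an admissible $(\ell,\ell')$-periodic maximal $\mathcal{K}$-configuration. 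The paper instead takes $\mathcal{K}$ maximal for the already-constructed $\varphi$, proves $\mathcal{K} \subset \mathcal{H}(\ell) \cap \mathcal{H}(\wp_{n})$ to guarantee the two semi-infinite edges (ruling out the degenerations by the same mechanism (\ref{eq_existence_tau_lem}) and the non-double-periodicity of $x|\sob{\mathcal{H}(\ell^{(-)})}$), and only then deduces aperiodicity of $\varphi$ from maximality together with Lemma \ref{lem_dir_exp}. As it stands, the proposal reproduces the outer shell of the Cyr--Kra strategy but leaves its core --- the construction forcing a second nonexpansive direction compatible with the $\ell$-periodic region --- unproved.
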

\begin{proof}
Writing $p :=  |\ell_{\scriptscriptstyle\mathcal{S}} \cap \mathcal{S}|-1$, we recall that $\mathcal{S}$ is an $(\ell,p)$-balanced. Since $\ell \in \bb{G}_1$ is a one-sided nonexpansive direction on $X_{\eta}$, there exist $x,y \in$ $X_{\eta}$ such that $x|\sob{\mathcal{H}(\ell)} = y|\sob{\mathcal{H}(\ell)}$, but $x_g \neq y_g$ for some $g \in \ell^{(-)} \cap \bb{Z}^2$. Translating $\mathcal{S}$, we can further assume that $(0,0) \in \mathcal{S}$ and that $\ell_{\scriptscriptstyle\mathcal{S}} = \ell^{(-)}$. Thanks to Lemma~\ref{lem_exist_x},  $x,y \in \mathcal{M}(\ell,\mathcal{S})$. So Propositions~\ref{pps_lp-balanceado} and \ref{pps_period_global} and Lemma \ref{lem_exist_faixa_period} imply that $x$ and $y$ are $\ell$-periodic configurations. But the fact that $x_g \neq y_g$ for some $g \in \ell^{(-)} \cap \bb{Z}^2$ prevents both configurations $x|\sob{\mathcal{H}(\ell^{(-)})}$ and $y|\sob{\mathcal{H}(\ell^{(-)})}$ from being doubly periodic. Thus, we can assume that the restriction of $x$ to the half plane $\mathcal{H}(\ell^{(-)})$ is not doubly periodic.

Note that, for every $\ell$-strip $F \supset \mathcal{S} \backslash \ell_{\scriptscriptstyle\mathcal{S}}$ and any finite set $B \subset F$,  there always exists $x' \in X_{\eta}$ such that $x|\sob{B} = x'|\sob{B}$, but $x|\sob{F} \neq x'|\sob{F}$. Indeed, otherwise, as $x|\sob{B} =$ $(T^g\eta)|\sob{B}$ for some $g \in \bb{Z}^2$, the restriction of $T^g\eta$ to the $(\ell,\mathcal{S},p)$-strip would be $\ell$-periodic and then Propositions~\ref{pps_lp-balanceado} and \ref{pps_period_global} would imply that $\eta$ is periodic, contradicting the aperiodicity of $\eta$. This allows us to construct a sequence of convex, $E(\mathcal{S})$-enveloped, finite sets $B_1 \subset A_{1} \subset B_2 \subset A_2 \subset \cdots \subset B_i \subset A_i \subset \cdots$, with $A_0 \in \ell_{\scriptscriptstyle\mathcal{S}} \cap \mathcal{S}$ and $B_{1} := \mathcal{S}$, and configurations $(\varphi_{i})_{i \in \bb{N}} \in$ $X_{\eta}$ such that, for each $i \in \bb{N}$, $B_i \in \mathcal{F}^{V\!ol}_{C}$ is an $E(\mathcal{S})$-enveloped set with $\ell_{B_i} = \ell_{\scriptscriptstyle\mathcal{S}}$, $B_i$ contains both $A_{i-1}$ and $[-i+1,i-1]^2 \cap \mathcal{H}(\ell^{(-)})$, $x|\sob{B_i} = \varphi_i|\sob{B_i}$, but $x|\sob{F_i} \neq \varphi_i|\sob{F_i}$ and $A_i$ is a maximal set among all $E(\mathcal{S})$-enveloped sets $\mathcal{T} \in \mathcal{F}^{V\!ol}_{C}$ such that $B_{i} \subset$ $\mathcal{T} \subset F_{i}$ and $x|\sob{\mathcal{T}} = \varphi_{i}|\sob{\mathcal{T}}$, where $$F_{i} := \left\{g \in \ell'' \cap \bb{Z}^2 : \ell'' \ \textrm{is parallel to} \ \ell \ \textrm{and} \ \ell'' \cap B_i \neq \emptyset\right\}.$$

For each integer $i \geq 1$, we have $\ell_{\scriptscriptstyle\mathcal{S}} \cap \mathcal{S} \subset \ell_{A_i} \cap A_i \subset \ell^{(-)}$. So let $g_1 := f_{A_1}(\ell^{(-)})$ be the final point of $\ell_{A_1} \cap A_1$ (with respect to the orientation of $\ell^{(-)}$) and, for each $i \in \bb{N}$, let $k_i \in$ $\bb{N}$ be such that $f_{A_i-k_i\vec{v}_{\ell}}(\ell^{(-)}) = g_1$.  We define $\hat{A}_i := A_i-k_i\vec{v}_{\ell}$. As $x \in X_{\eta}$ is $\ell$-periodic, there is an integer $k \geq 0$ such that $T^{k\vec{v}_{\ell}}x = T^{k_i\vec{v}_{\ell}}x$ for infinitely many $i$. By passing to a subsequence, we can assume this holds for all $i$.

Suppose $\varpi_i(0),\varpi_i(1), \ldots, \varpi_i(K),\varpi_i(K+1) \in E(\hat{A}_i)$ are enumerated according to the orientation inherited from the boundary of $\conv(\hat{A}_i)$, where $\varpi_i(0)$ is parallel to $\ell$ and $\varpi_i(0)$ and $\varpi_i(K+1)$ are antiparallel. Since $\bigcup_{i=1}^{\infty} A_i = \mathcal{H}(\ell^{(-)})$, let $1 \leq$ $k_{min} \leq K$ be the smallest integer such that $|\varpi_i(k_{min}) \cap \hat{A}_i| < |\varpi_{i+1}(k_{min}) \cap \hat{A}_{i+1}|$ for infinity many $i$. By passing to a subsequence, we can assume that this holds for all $i$ and, if $k_{min} > 1$, that $|\varpi_i(k) \cap \hat{A}_i| = |\varpi_{i+1}(k) \cap \hat{A}_{i+1}|$ for every $1 \leq k \leq k_{min}-1$ and all $i \in \bb{N}$. In particular, $\hat{A}_{\infty} := \bigcup_{i=1}^{\infty} \hat{A}_i$ is a convex weakly $E(\mathcal{S})$-enveloped set, with two semi-infinite edges, one of which is parallel to $\ell \in \bb{G}_1$ and the other one is parallel to the edges $\varpi_i(k_{min})$.


We define $\hat{\varphi}_i := T^{k\vec{v}_{\ell}}\varphi_i$ for all $i$ and $\hat{x} := T^{k\vec{v}_{\ell}}x$. As $\hat{\varphi}_{i'}|\sob{\hat{A}_i} = \hat{\varphi}_i|\sob{\hat{A}_i}$ for all $1 \leq i \leq i'$, by the compactness of $X_{\eta}$, the sequence $\{\hat{\varphi}_i\}_{i \in \bb{N}}$ has an accumulation point $\varphi \in X_{\eta}$ with $\varphi|\sob{\hat{A}_{\infty}} =$ $\hat{x}|\sob{\hat{A}_{\infty}}$. In particular, it follows that $\varphi|\sob{\hat{A}_{\infty}}$ is an $\ell$-perio- dic configuration. 

For each $0 \leq j \leq k_{min}$, let $\ell_{j} \subset \bb{R}^2$ denote the oriented line parallel to the edges $\varpi_i(j) \in$ $E(\hat{A}_i)$ such that $\varpi_i(j) \subset \ell_j$ for all $i$, and let $\wp_{m+1} := \wp^{(-)}_{m}$ for all $m \geq 0$, where $\wp_0 := \ell_{k_{min}}$. Writing $\hat{A}^{(m)}_{\infty} := \mathcal{H}(\ell_{k_{min}-1}) \cap (\hat{A}_{\infty} \cup$ $\wp_{0} \cup \wp_{1} \cup \cdots \cup \wp_{m})$ for all $m \geq 0$, it is not difficult to argue that, from the maximality of $\hat{A}_i$, there exists $n \geq 0$ such that
\begin{equation}\label{eq_existence_tau_lem}
\varphi|\sob{\hat{A}_{\infty}^{(n)}} = \hat{x}|\sob{\hat{A}_{\infty}^{(n)}}, \ \ \textrm{but} \ \  \varphi|\sob{\hat{A}_{\infty}^{(n+1)}} \neq \hat{x}|\sob{\hat{A}_{\infty}^{(n+1)}}.
\end{equation}

Let $\ell' \in \bb{G}_1$ be the oriented line parallel to the edges $\varpi_i(k_{min})$. If $\mathcal{T} \in \mathcal{F}^{V\!ol}_{C}$ is an $(\eta,\ell')$-generating set such that $\ell'_{\scriptscriptstyle\mathcal{T}} = \wp_{n+1}$ and $a \in \bb{Z}$ satisfies $\mathcal{T} \backslash \ell'_{\scriptscriptstyle\mathcal{T}}+a\vec{v}_{\ell'} \subset$ $\hat{A}_{\infty}^{(n)}$, then $\varphi \in \mathcal{M}_{a+}(\ell'\!,\mathcal{T})$. In fact,  if there exists a $\mathcal{T} \backslash \ell'_{\scriptscriptstyle\mathcal{T}}$-configuration $\gamma \in L_{a+}^{\ell'}(\mathcal{T} \backslash \ell'_{\scriptscriptstyle\mathcal{T}},\varphi)$ with $N_{\mathcal{T}}(\ell'\!,\gamma) = 1$, from $\varphi|\sob{\hat{A}_{\infty}^{(n)}} = \hat{x}|\sob{\hat{A}_{\infty}^{(n)}}$,  it follows that
\begin{equation}\label{eq_igualdade_U_balan}
\varphi|\sob{\hat{A}_{\infty}^{(n)} \cup (\mathcal{T}+a\vec{v}_{\ell'})} = \hat{x}|\sob{\hat{A}_{\infty}^{(n)} \cup (\mathcal{T}+a\vec{v}_{\ell'})}.
\end{equation}
Although $\hat{A}_{\infty}^{(n)}$ may be not weakly $E(\mathcal{S})$-enveloped,  both $\hat{A}_{\infty}^{(n)}$ and $\mathcal{S}$ have suitable geometries to fully take advantage of (\ref{eq_igualdade_U_balan}). Therefore, as $\mathcal{S}$ is an $\eta$-generating set, from (\ref{eq_igualdade_U_balan}) it follows by induction that $\varphi|\sob{\hat{A}_{\infty}^{(n+1)}} = \hat{x}|\sob{\hat{A}_{\infty}^{(m+1)}}$, which contradicts (\ref{eq_existence_tau_lem}).

Note that $\ell' \in \bb{G}_1$ is a one-sided nonexpansive direction on $X_{\eta}$, since, other\-wise, from Lemma \ref{lem_dir_exp} we could get (\ref{eq_igualdade_U_balan}) for some integer $b \geq a$ and, exactly as above, reach an absurd. Hence, Proposition \ref{pps_lp-balanceado} provides an $(\ell'\!,q')$-balanced set $\mathcal{T} \in \mathcal{F}^{V\!ol}_{C}$ with $q' := |\ell'_{\scriptscriptstyle\mathcal{T}} \cap \mathcal{T}|-1$, which can be supposed such that $\ell'_{\scriptscriptstyle\mathcal{T}} = \wp_{n+1}$. For $a \in$ $\bb{Z}$ such that the $(\ell'\!,\mathcal{T},q')$-half-strip $F^+(a)$ is contained in $\hat{A}^{(n)}_{\infty}$, the previous discussion ensures that $\varphi \in$ $\mathcal{M}_{a+}(\ell'\!,\mathcal{T})$. Remark \ref{rem_balan_semifaixa1} and Lemma \ref{lem_exist_sfaixa_period} imply that there exists an integer $b \geq a$ such that $\varphi|\sob{F^+(b)}$ is periodic of period $t'\vec{v}_{\ell'}$ for some $t' \in \bb{N}$. As $\varphi|\sob{\hat{A}_{\infty}^{(n)}}$ is periodic of period $t \vec{v}_{\lle}$ for some $t \in \bb{N}$, then $\varphi|\sob{F^+(b)+t \vec{v}_{\lle}}$ is also periodic of period $t'\vec{v}_{\ell'}$. Applying Lemma \ref{lem_extens_semifaixa} successively to fill out the gap between $F^+(b)$ and $F^+(b)+t \vec{v}_{\lle}$ (if there exists), we obtain a set $\mathcal{W} \subset \hat{A}_{\infty}^{(n)}$ such that $\varphi|\sob{\mathcal{W}}$ is both $\ell$-periodic (since $\varphi|\sob{\hat{A}_{\infty}^{(n)}}$ is $\ell$-periodic) and $\ell'$-periodic (see Figure~\ref{ext_corol_2period}). 
\begin{figure}[ht]
	\centering
	\def\svgwidth{4.5cm}
	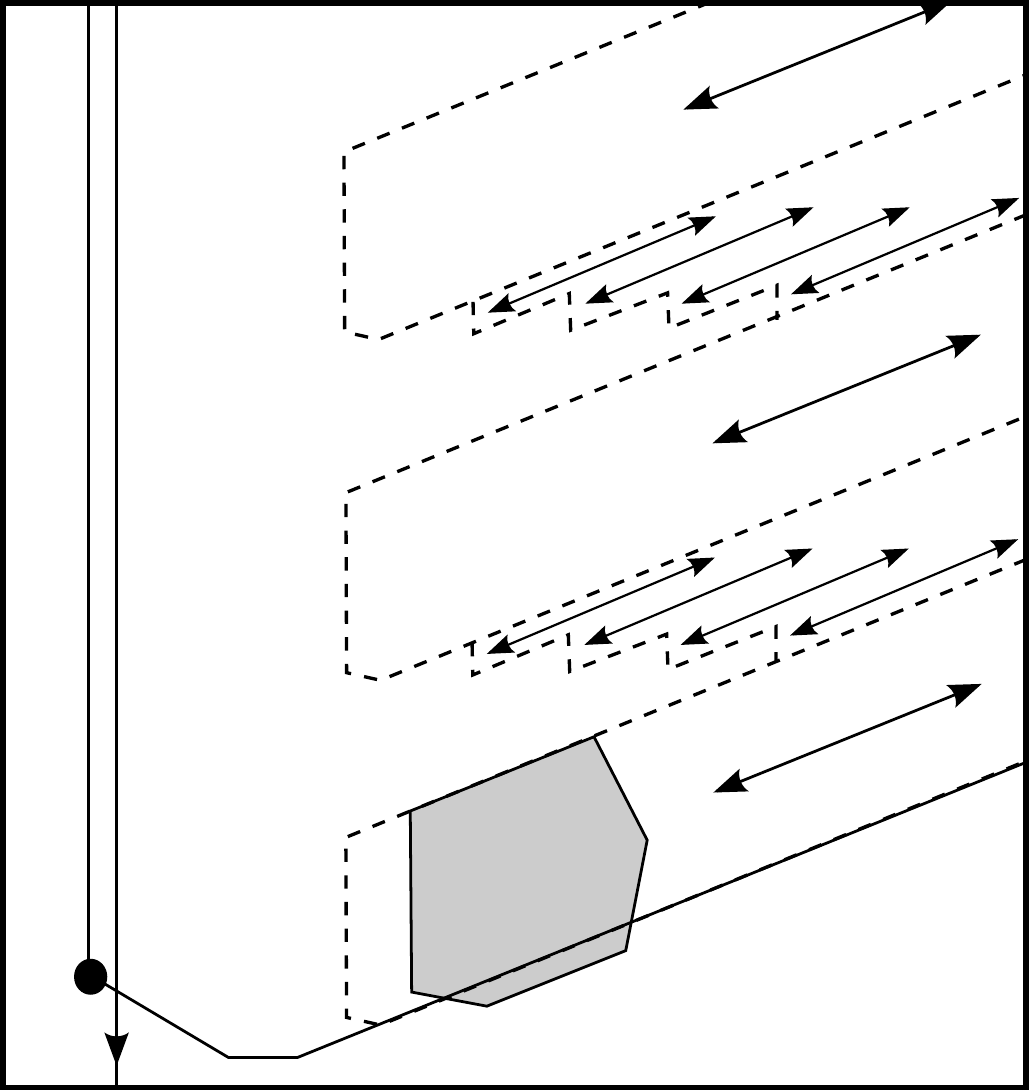
	\caption{The dashed region represents the set $\mathcal{W} \subset \hat{A}^{(n)}_{\infty}$ where $\varphi|\sob{\mathcal{W}}$ is doubly periodic.}
	\label{ext_corol_2period}
\end{figure}
In any case, there exists a convex weakly $E(\mathcal{S})$-enveloped set $\mathcal{K}' \subset \hat{A}_{\infty}^{(n)}$ with two semi-infinite edges parallel, respectively, to $\ell$ and $\ell'$ such that $\varphi|\sob{\mathcal{K}'}$ is $\ell$-periodic and $\ell'$-periodic.

Let $\mathcal{K} \subset \bb{Z}^2$ denote the maximal set among all convex sets $Q \supset \mathcal{K}'$ such that $\varphi|\sob{Q}$ is both $\ell$-periodic and $\ell'$-periodic.

Since $\mathcal{K}$ is convex and $\mathcal{K}'$ has two semi-infinite edges parallel, respectively,  to $\ell$ and $\ell'$ (recall that $\wp_{n}$ is parallel to $\ell'$), the inclusions $\mathcal{K}' \subset \mathcal{K} \subset \mathcal{H}(\ell) \cap \mathcal{H}(\wp_{n})$ ensure that $\mathcal{K}$ has two semi-infinite edges parallel, respectively,  to $\ell$ and $\ell'$. To conclude the proof, it is enough to show that $\mathcal{K} \subset \mathcal{H}(\ell) \cap \mathcal{H}(\wp_{n})$. If $\mathcal{K} \not\subset \mathcal{H}(\wp_{n})$, then $\wp_{n+1} \cap$ $\conv(\mathcal{K})$ is a half line. As $\varphi|\sob{\mathcal{K}}$ and $\hat{x}$ are $\ell$-periodic and $\varphi|\sob{\hat{A}^{(n)}_{\infty}} =  \hat{x}|\sob{\hat{A}^{(n)}_{\infty}}$, it follows that $\varphi|\sob{\hat{A}^{n}_{\infty} \cup (\wp_{n+1} \cap \mathcal{K})} = \hat{x}|\sob{\hat{A}^{n}_{\infty} \cup (\wp_{n+1} \cap \mathcal{K})}$. But such equality is equivalent to (\ref{eq_igualdade_U_balan})  and, by the same reasoning that succeeds it, one contradicts (\ref{eq_existence_tau_lem}). Now, if $\mathcal{K} \not\subset$ $ \mathcal{H}(\ell)$, then $\ell'' \cap \conv(\mathcal{K})$ is a half line for every line $\ell'' \subset$ $\mathcal{H}(\ell^{(-)})$ parallel to $\ell$. This allows us to transfer the doubly periodicity of $\hat{x}|\sob{\mathcal{K} \cap \hat{A}^{(n)}_{\infty} \cap \mathcal{H}(\ell^{(-)})}$ to $\hat{x}|\sob{\mathcal{H}(\ell^{(-)})}$, which is an absurd, since we are assuming $x|\sob{\mathcal{H}(\ell^{(-)})}$ is not doubly periodic.

Finally, from Lemma \ref{lem_dir_exp} and maximality of $\mathcal{K}$, it follows that $\ell,\ell' \in \bb{G}_1$ are one-sided  nonexpansive directions on $\overline{Orb \, (\varphi)}$, which means that $\varphi \in X_{\eta}$ is aperiodic and so an $(\ell,\ell')$-periodic maximal $\mathcal{K}$-configuration.
\end{proof}

If $\mathcal{T} \in \mathcal{F}_{C}$ and $\ell \in \bb{G}_1$, from now on, let $\diam_{\ell}(\mathcal{T})$ denote the number of distinct oriented lines parallel to $\ell$ that have nonempty intersection with $\mathcal{T}$.

\begin{lemma}\label{lemma_hafl-set}
Given $\eta \in A^{\bb{Z}^2}$, if $\mathcal{S} \in \mathcal{F}^{V\!ol}_{C}$ is mlc $\eta$-generating and $\ell,\lle \in \bb{G}_1$ are anti- parallel one-sided nonexpansive directions on $X_{\eta}$, then there exists an $\eta$-generating set $\mathcal{T} \subset \mathcal{S}$ with $\diam_{\ell}(\mathcal{T}) \leq \lceil \frac{1}{2}\diam_{\ell}(\mathcal{S}) \rceil$.
\end{lemma}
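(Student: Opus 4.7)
I shall construct a convex set $\mathcal{U} \subset \mathcal{S}$ with $\diam_\ell(\mathcal{U}) \leq \lceil \tfrac{1}{2}\diam_\ell(\mathcal{S})\rceil$ and $P_\eta(\mathcal{U}) \leq |\mathcal{U}| + |A| - 2$, and then invoke Remark~\ref{obs_exist_generating} (inside $\mathcal{U}$) to pass from $\mathcal{U}$ to an $\eta$-generating set $\mathcal{T} \subset \mathcal{U}$ of the required diameter.

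Write $n := \diam_\ell(\mathcal{S})$; since $\mathcal{S} \in \mathcal{F}^{V\!ol}_{C}$, one has $n \geq 2$. After swapping the roles of $\ell$ and $\lle$ if necessary, I may assume $|\ell_{\scriptscriptstyle\mathcal{S}} \cap \mathcal{S}| \leq |\lle_{\scriptscriptstyle\mathcal{S}} \cap \mathcal{S}|$. Enumerate the lines parallel to $\ell$ meeting $\mathcal{S}$ as $\ell^{(1)}, \dots, \ell^{(n)}$ in the direction of $\ell$, and set $t_j := |\ell^{(j)} \cap \mathcal{S}|$, so that in particular $t_n \leq t_1$.

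The first step is to peel $\ell_{\scriptscriptstyle\mathcal{S}}$. Since $\mathcal{S}$ is $\eta$-generating, and therefore $(\eta,\ell)$-generating, and $\ell \in \bb{G}_1$ is one-sided nonexpansive, the set $\mathcal{M}(\ell,\mathcal{S})$ is nonempty, so inequality~(\ref{desig_card}) provides the \emph{strict} drop
\[
P_\eta(\mathcal{S}') \;\leq\; P_\eta(\mathcal{S}) - 1 \;\leq\; \bigl\lfloor \tfrac{1}{2}|\mathcal{S}| \bigr\rfloor + |A| - 2,
\]
where $\mathcal{S}' := \mathcal{S} \setminus \ell_{\scriptscriptstyle\mathcal{S}}$. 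This unit of complexity gained from nonexpansiveness is what will make room for the desired final bound.

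The second step is to select a convex window $\mathcal{U} \subset \mathcal{S}'$ consisting of $\lceil n/2 \rceil$ consecutive slices parallel to $\ell$. Among the $\lfloor n/2 \rfloor$ such windows, let $\mathcal{U}$ realize the maximum cardinality $W^* := \max_i \sum_{j=i+1}^{i+\lceil n/2 \rceil} t_j$. I claim that $W^* \geq \lfloor |\mathcal{S}|/2 \rfloor$: this follows from a double-counting / averaging argument applied to the window sums, combined with the concavity of the slice-size sequence $(t_j)_{j=1}^{n}$ (which is a consequence of convexity of $\mathcal{S} \in \mathcal{F}_{C}^{V\!ol}$) and the WLOG hypothesis $t_n \leq t_1$, the latter precisely forcing the ``centre of mass'' of $\mathcal{S}$ into the remaining set $\mathcal{S}'$. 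Once the claim is in hand, $\diam_\ell(\mathcal{U}) = \lceil n/2 \rceil$ and
\[
P_\eta(\mathcal{U}) \;\leq\; P_\eta(\mathcal{S}') \;\leq\; \bigl\lfloor \tfrac{1}{2}|\mathcal{S}| \bigr\rfloor + |A| - 2 \;\leq\; |\mathcal{U}| + |A| - 2.
\]
Remark~\ref{obs_exist_generating} applied inside $\mathcal{U}$ then yields the desired $\eta$-generating $\mathcal{T} \subset \mathcal{U} \subset \mathcal{S}$ with $\diam_\ell(\mathcal{T}) \leq \lceil \tfrac{1}{2}\diam_\ell(\mathcal{S})\rceil$.

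The main obstacle will be the averaging step giving $W^* \geq \lfloor |\mathcal{S}|/2 \rfloor$, since the bound is tight in the parallelogram-type case where all slice sizes are equal; there the strict decrease $P_\eta(\mathcal{S}') \leq P_\eta(\mathcal{S}) - 1$ delivered by the nonexpansiveness of $\ell$ is essential to meet the bound with exactly the right margin.
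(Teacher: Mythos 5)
Your overall scheme (produce a convex $\mathcal{U}\subset\mathcal{S}$ with $\diam_{\ell}(\mathcal{U})\leq\lceil\frac{1}{2}\diam_{\ell}(\mathcal{S})\rceil$ and $P_{\eta}(\mathcal{U})\leq|\mathcal{U}|+|A|-2$, then invoke Remark~\ref{obs_exist_generating}) is exactly the paper's scheme, and your use of \eqref{desig_card} to get the strict drop $P_{\eta}(\mathcal{S}\setminus\ell_{\scriptscriptstyle\mathcal{S}})<P_{\eta}(\mathcal{S})$ is sound. The genuine gap is the window claim $W^{*}\geq\lfloor\frac{1}{2}|\mathcal{S}|\rfloor$, which you only assert. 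The justification you sketch does not work: (a) the slice-count sequence of a convex lattice set is \emph{not} concave --- for instance $\{(0,0),(0,1),(0,2),(1,2)\}$ is convex with horizontal slice counts $1,1,2$; only the Euclidean widths of $\conv(\mathcal{S})$ are concave, and the integer counts can deviate from them by $1$ on each side; (b) averaging over the $\lfloor n/2\rfloor$ windows bounds a weighted sum in which the extreme slices of $\mathcal{S}'$ carry weight $1$ while middle slices carry weight up to $\lfloor n/2\rfloor$, so it does not produce $\lfloor\frac{1}{2}|\mathcal{S}|\rfloor$; (c) the heuristic that $t_{n}\leq t_{1}$ ``forces the centre of mass into $\mathcal{S}'$'' is not an argument. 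The claim is in fact true, but a correct proof needs the concavity of the width function together with the estimates $\lfloor L_{j}\rfloor\leq t_{j}\leq L_{j}+1$, applied to the two extreme windows of $\mathcal{S}'$ (if both had sum $\leq\lfloor\frac{1}{2}|\mathcal{S}|\rfloor-1$, the overlap would have to have cardinality $\leq t_{n}-2$, contradicting that every intermediate slice has at least $\min(t_{1},t_{n})-1$ points); as written, the central step of your proof is missing.

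It is also worth noting that this difficulty is self-inflicted: by normalizing $|\ell_{\scriptscriptstyle\mathcal{S}}\cap\mathcal{S}|\leq|\lle_{\scriptscriptstyle\mathcal{S}}\cap\mathcal{S}|$ and peeling only $\ell_{\scriptscriptstyle\mathcal{S}}$, you commit to one direction and then must \emph{locate} a heavy window inside $\mathcal{S}\setminus\ell_{\scriptscriptstyle\mathcal{S}}$, which is where the lattice geometry enters. The paper instead uses both hypotheses at once: take $\mathcal{U}_{1}\subset\mathcal{S}\setminus\ell_{\scriptscriptstyle\mathcal{S}}$ and $\mathcal{U}_{2}\subset\mathcal{S}\setminus\lle_{\scriptscriptstyle\mathcal{S}}$, each consisting of the $\lceil\frac{1}{2}\diam_{\ell}(\mathcal{S})\rceil$ slices adjacent to one of the two extreme lines. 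Since $\lceil\frac{1}{2}\diam_{\ell}(\mathcal{S})\rceil+\lceil\frac{1}{2}\diam_{\ell}(\mathcal{S})\rceil\geq\diam_{\ell}(\mathcal{S})$, these two sets cover $\mathcal{S}$, so one of them has at least $\frac{1}{2}|\mathcal{S}|$ points, and the strict complexity drop for that set is supplied by the one-sided nonexpansiveness of $\ell$ or of $\lle$ accordingly --- this is precisely why the statement assumes both directions nonexpansive. That choice eliminates the counting claim entirely and gives the lemma in a few lines; if you prefer your one-directional route, you must supply the width-function argument above in full.
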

\begin{proof}
Initially, choose $\ell_1, \ell_2 \subset \bb{R}^2$ antiparallel oriented lines, with $\ell_1$ parallel to $\ell$, such that $\mathcal{U}_1 := \mathcal{S} \cap \mathcal{H}(\ell_1)$ and $\mathcal{U}_2 := \mathcal{S} \cap \mathcal{H}(\ell_2)$ satisfy $\diam_{\ell}(\mathcal{U}_1) = \lceil \frac{1}{2}\diam_{\ell}(\mathcal{S}) \rceil$ and $\diam_{\ell}(\mathcal{U}_2) = \lceil \frac{1}{2}\diam_{\ell}(\mathcal{S}) \rceil$. Since $\mathcal{U}_1 \subset \mathcal{S} \backslash \ell^{}_{\scriptscriptstyle\mathcal{S}}$, $\mathcal{U}_2 \subset \mathcal{S} \backslash \lle^{}_{\scriptscriptstyle\mathcal{S}}$ and $\ell,\lle \in \bb{G}_1$ are one-sided nonexpansive directions on $X_{\eta}$, one has $P_{\eta}(\mathcal{U}_1) < P_{\eta}(\mathcal{S})$ and $P_{\eta}(\mathcal{U}_2) < P_{\eta}(\mathcal{S})$. Hence, if $|\mathcal{U}_1| \geq \frac{1}{2}|\mathcal{S}|$, then $$P_{\eta}(\mathcal{U}_1)-|\mathcal{U}_1| < P_{\eta}(\mathcal{S})-|\mathcal{U}_1| \leq \frac{1}{2}|\mathcal{S}|+|A|-1-|\mathcal{U}_1| \leq \frac{1}{2}|\mathcal{S}|+|A|-1-\frac{1}{2}|\mathcal{S}| = |A|-1,$$ that is, $P_{\eta}(\mathcal{U}_1) \leq |\mathcal{U}_1|+|A|-2$. Thus, from Remark \ref{obs_exist_generating}, there exists an $\eta$-generating set $\mathcal{T} \subset \mathcal{U}_1$ and, in particular, $\diam_{\ell}(\mathcal{T}) \leq \lceil \frac{1}{2}\diam_{\ell}(\mathcal{S}) \rceil$. If $|\mathcal{U}_1| < \frac{1}{2}|\mathcal{S}|$, it is clear that $|\mathcal{U}_2| \geq \frac{1}{2}|\mathcal{S}|$ and the same argument as before concludes the proof.
\end{proof}

The next proposition generalizes Claim 5.4 in \cite{van}. This result shows that the existence of an mlc generating set provides upper bounds for the periods of restrictions to $\mathcal{K}$ of $(\ell,\ell')$-periodic maximal $\mathcal{K}$-configurations, with one of these periods even remaining outside the convex region $\mathcal{K}$. The strategy of its proof consists in arguing that there exists a suitable translation of the mlc generating set such that each restriction to the associated half strip admits multiple extensions, so that one may apply Proposition \ref{pps_cap3_baixa_compl}. Some notions used here can be recalled in Notation~\ref{not_nextline}, Definition \ref{retabase}, (\ref{def_semi_faixa}) and (\ref{semifaixat2}). Furthermore, recall that $\mathcal{M}_{a+}(\ell,\mathcal{U})$ and $\mathcal{M}_{a-}(\ell,\mathcal{U})$ are the sets formed by the configurations $x \in X_{\eta}$ that satisfy,  respectively, (\ref{eq_pontos_naoexpan_sfaixa_v1}) and (\ref{eq_pontos_naoexpan_sfaixa_v2}).

\begin{proposition}\label{afm_principal}
For $\eta \in A^{\bb{Z}^2}$, suppose there exists an $(\ell,\ell')$-periodic maximal $\mathcal{K}$-configuration $\varphi \in X_{\eta}$. Let $\mathcal{R}' \in \mathcal{F}^{V\!ol}_{C}$ be an $(\lle'\!,r')$-balanced set  and let $\mathcal{S} \in \mathcal{F}^{V\!ol}_{C}$ be an mlc $\eta$-generating set such that $\ell^{}_{\scriptscriptstyle\mathcal{S}} = \ell_{\scriptscriptstyle\mathcal{K}}^{(-)}$ and $|\ell^{}_{\scriptscriptstyle\mathcal{S}} \cap \mathcal{S}| \leq$ $|\lle^{}_{\scriptscriptstyle\mathcal{S}} \cap \mathcal{S}|$. Then the following conditions hold:
\begin{enumerate}
	\item[(i)] $\varphi|\sob{\mathcal{K}}$ is periodic of period $t_0\vec{v}_{\lle}$ for some integer $t_0 \leq \lceil \frac{1}{2}|\ell^{}_{\scriptscriptstyle\mathcal{S}} \cap \mathcal{S}| \rceil-1$ and periodic of period $t'_0\vec{v}_{\ell'}$ for some integer $t_0' \leq |\ell'_{\scriptscriptstyle\mathcal{S}} \cap \mathcal{S}|-2$,
	\item[(ii)] denoting $p := |\ell^{}_{\scriptscriptstyle\mathcal{S}} \cap \mathcal{S}|-1$, for any $(\ell,\mathcal{S},p)$-half-strip $F^-(a)$,  $a \in \bb{Z}$, contained in $\mathcal{K}$,  the restriction of $\varphi$ to $(\ell^{}_{\scriptscriptstyle\mathcal{S}} \cup F^-)(a)$ is periodic of period $\tau_0\vec{v}_{\lle}$ for some integer $\tau_0 \leq$ $\lceil \frac{1}{2}|\ell^{}_{\scriptscriptstyle\mathcal{S}} \cap \mathcal{S}| \rceil-1$.
\end{enumerate}
\end{proposition}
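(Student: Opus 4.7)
The strategy is to exploit the $(\ell,p)$-balanced structure of $\mathcal{S}$ (granted by Remark~\ref{obs_exist_arestas_mcbc} with $p = |\ell_{\scriptscriptstyle\mathcal{S}} \cap \mathcal{S}|-1$) in order to apply Proposition~\ref{pps_cap3_baixa_compl} on a half-strip sitting inside $\mathcal{K}$, and then to propagate the resulting periodicity to the whole of $\mathcal{K}$ by leveraging the already known $\ell'$-periodicity of $\varphi|_\mathcal{K}$. As preparation I would translate $\mathcal{S}$ so that $\ell_{\scriptscriptstyle\mathcal{S}} = \ell_{\scriptscriptstyle\mathcal{K}}^{(-)}$, placing $\mathcal{S} \setminus \ell_{\scriptscriptstyle\mathcal{S}}$ inside $\mathcal{H}(\ell_{\scriptscriptstyle\mathcal{K}})$, and fix $a \in \bb{Z}$ large enough so that $F^-(a) \subset \mathcal{K}$.

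For part~(i), the central step in the $\ell$-direction is to prove $\varphi \in \mathcal{M}_{a-}(\ell,\mathcal{S})$. I would argue by contradiction: if some $\gamma \in L^\ell_{a-}(\mathcal{S} \setminus \ell_{\scriptscriptstyle\mathcal{S}}, \varphi)$ satisfied $N_\mathcal{S}(\ell,\gamma) = 1$, Lemma~\ref{rem_equival_lema} applied with $y = T^h\varphi$ for each of two independent periods $h$ of $\varphi|_\mathcal{K}$ (one in $\ell \cap (\bb{Z}^2)^*$, one in $\ell' \cap (\bb{Z}^2)^*$) would force $\varphi|_{(\ell_{\scriptscriptstyle\mathcal{S}} \cup F^-)(a)}$ to be doubly periodic; together with $\varphi|_\mathcal{K}$'s double periodicity, the resulting doubly periodic half-line on $\ell_{\scriptscriptstyle\mathcal{K}}^{(-)}$ would produce a convex set with semi-infinite edges parallel to $\ell$ and $\ell'$ strictly containing $\mathcal{K}$ on which $\varphi$ is doubly periodic, contradicting the maximality of $\mathcal{K}$. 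Once $\varphi \in \mathcal{M}_{a-}(\ell,\mathcal{S})$ is in hand, Proposition~\ref{pps_cap3_baixa_compl}(i) produces a period $t\vec{v}_{\lle}$ of $\varphi|_{F^-(a+p)}$ with $t \leq P := \lceil \tfrac{1}{2}|\ell_{\scriptscriptstyle\mathcal{S}} \cap \mathcal{S}|\rceil - 1$. I would then lift this period to all of $\mathcal{K}$ by combining it with the $\ell'$-period $h' \in \ell' \cap (\bb{Z}^2)^*$ of $\varphi|_\mathcal{K}$: for any $g \in \mathcal{K}$, one chooses $k$ with $g + k h'$ and $g + t\vec{v}_{\lle} + k h'$ both inside $F^-(a+p)$ (possible since $F^-(a+p)$ extends to infinity along $\vec{v}_{\lle}$ and $\mathcal{K}$ has a semi-infinite $\ell'$-edge), obtaining $\varphi_g = \varphi_{g + k h'} = \varphi_{g + k h' + t\vec{v}_{\lle}} = \varphi_{g + t\vec{v}_{\lle}}$ and hence $t_0 = t \leq P$. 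For the $\ell'$-direction bound $t_0' \leq |\ell'_{\scriptscriptstyle\mathcal{S}} \cap \mathcal{S}|-2$, I would run the parallel argument with the $(\lle',r')$-balanced set $\mathcal{R}'$ playing the role of $\mathcal{S}$, where the sharper form of the bound arises from applying the Alphabetical Morse-Hedlund Theorem to the sequence of $\ell'_{\scriptscriptstyle\mathcal{S}} \cap \mathcal{S}$-patterns $(T^{t\vec{v}_{\ell'}}\varphi)|_{\ell'_{\scriptscriptstyle\mathcal{S}} \cap \mathcal{S}}$ together with the complexity inequality $P_\eta(\mathcal{S}) - P_\eta(\mathcal{S} \setminus \ell'_{\scriptscriptstyle\mathcal{S}}) \leq \lceil \tfrac{1}{2}|\ell'_{\scriptscriptstyle\mathcal{S}} \cap \mathcal{S}|\rceil - 1$ coming from~(\ref{eq_mlc-generating}).

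For part~(ii), $\varphi|_{F^-(a)}$ inherits the period $t_0\vec{v}_{\lle}$ of $\varphi|_\mathcal{K}$ established in part~(i). If $\varphi \not\in \mathcal{M}_{a-}(\ell,\mathcal{S})$, Lemma~\ref{rem_equival_lema} applied with $y = T^{t_0\vec{v}_{\lle}}\varphi$ (which agrees with $\varphi$ on $F^-(a)$ by periodicity) immediately transfers the period to $(\ell_{\scriptscriptstyle\mathcal{S}} \cup F^-)(a)$, giving $\tau_0 = t_0 \leq P$. In the complementary case $\varphi \in \mathcal{M}_{a-}(\ell,\mathcal{S})$, I would combine the period $t_0$ inherited from $\mathcal{K}$ with the output of Proposition~\ref{pps_cap3_baixa_compl}(ii), exploiting that any period on $\ell_{\scriptscriptstyle\mathcal{S}}$ produced by the extension must be compatible with the $\ell'$-period $h'$ translating $\ell_{\scriptscriptstyle\mathcal{K}}^{(-)}$-points back into $\mathcal{K}$, which forces the period on $(\ell_{\scriptscriptstyle\mathcal{S}} \cup F^-)(a)$ down to $\tau_0 \leq t_0 \leq P$ rather than the generic bound $2P-2$. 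The hard part will be carrying out this second case of part~(ii) cleanly, and above all verifying the maximality argument in paragraph~2 geometrically: one must check that the doubly periodic extension of $\mathcal{K}$ obtained via Lemma~\ref{rem_equival_lema} combined with propagation by $\varphi|_\mathcal{K}$'s periods really yields a convex set strictly larger than $\mathcal{K}$ within the admissible class (positively oriented, with semi-infinite edges parallel to $\ell$ and $\ell'$), which demands careful bookkeeping of how the added half-line on $\ell_{\scriptscriptstyle\mathcal{K}}^{(-)}$ and the interior points filled in by taking the convex hull relate to $\mathcal{K}$'s corner structure.
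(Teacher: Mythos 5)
Your opening step is the same as the paper's: the equalities $(T^{h}\varphi)|_{F^-(a)} = \varphi|_{F^-(a)} = (T^{h'}\varphi)|_{F^-(a)}$ together with Lemma~\ref{rem_equival_lema} and the maximality of $\mathcal{K}$ do give $\varphi \in \mathcal{M}_{a-}(\ell,\mathcal{S})$, and Proposition~\ref{pps_cap3_baixa_compl}(i) then gives the period $t_0\vec{v}_{\lle}$ on the half-strip. But from there the proposal has genuine gaps. First, your lifting of $t_0\vec{v}_{\lle}$ from $F^-(a+p)$ to all of $\mathcal{K}$ by choosing $k$ with $g+kh'$ and $g+t\vec{v}_{\lle}+kh'$ in the half-strip does not work: the recession cone of $\mathcal{K}$ contains $+\vec{v}_{\ell'}$, so positive multiples of $h'$ move points \emph{away} from the band of lines parallel to $\ell$ covered by the strip, while negative multiples may leave $\mathcal{K}$, destroying the chain of $h'$-periodicity identities; moreover $h'=\kappa'\vec{v}_{\ell'}$ may cross more lines parallel to $\ell$ than the strip is wide, so no suitable $k$ need exist at all. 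The paper's logic runs in the opposite order: it first proves the $\ell'$-bound $t_0'\leq |\ell'_{\scriptscriptstyle\mathcal{S}}\cap\mathcal{S}|-2$, which guarantees $F^-(a)\cap(F^-(a)+t_0'\vec{v}_{\ell'})\neq\emptyset$, and only then spreads $t_0\vec{v}_{\lle}$ over $\mathcal{K}$ via Fine--Wilf. Second, that $\ell'$-bound is the heart of the proposition and your one-sentence plan for it does not assemble into an argument: you cannot let $\mathcal{R}'$ ``play the role of $\mathcal{S}$'' while importing the complexity inequality (\ref{eq_mlc-generating}) for $\mathcal{S}$, because Lemma~\ref{lem_exist_sfaixa_period} and Proposition~\ref{pps_cap3_baixa_compl} tie the complexity difference, the induced alphabet and the strip to one and the same set. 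The paper works with translates of $\mathcal{S}$ itself and must split into two cases according to whether $|\ell'_{\scriptscriptstyle\mathcal{S}}\cap\mathcal{S}|\leq|\lle'_{\scriptscriptstyle\mathcal{S}}\cap\mathcal{S}|$ (only then is $\mathcal{S}$ balanced in that direction); in the opposite case, when no translate $\mathcal{S}'$ with half-strip inside $\mathcal{K}$ satisfies $\varphi\in\mathcal{M}_{a'-}(\lle',\mathcal{S}')$, it needs $\mathcal{R}'$ precisely to build the enlarged $\ell'$-periodic region $\varGamma\supset\mathcal{K}$ and a pigeonhole-plus-maximality claim producing $\mathcal{S}'_1\subset\varGamma$ with $\varphi\in\mathcal{M}_{0-}(\lle',\mathcal{S}'_1)$; and in either case the transfer of the half-strip period to all of $\mathcal{K}$ (via the half-diameter generating set of Lemma~\ref{lemma_hafl-set}, an induction along lines, and Fine--Wilf) still has to be done. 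None of this appears in your sketch.

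For part (ii) the difficulty is also misidentified. Since $F^-(a)\subset\mathcal{K}$, the argument above shows $\varphi\in\mathcal{M}_{a-}(\ell,\mathcal{S})$ for every such $a$, so your ``easy case'' is vacuous; and in the remaining case the mechanism you invoke --- compatibility with the $\ell'$-period $h'$ ``translating $\ell_{\scriptscriptstyle\mathcal{K}}^{(-)}$-points back into $\mathcal{K}$'' --- presupposes $h'$-periodicity at points of $\ell_{\scriptscriptstyle\mathcal{S}}=\ell_{\scriptscriptstyle\mathcal{K}}^{(-)}$, which lie outside $\mathcal{K}$; that is exactly what the maximality of $\mathcal{K}$ forbids you to assume, and indeed the generic output of Proposition~\ref{pps_cap3_baixa_compl}(ii) is only $2\lceil\frac{1}{2}|\ell_{\scriptscriptstyle\mathcal{S}}\cap\mathcal{S}|\rceil-2$. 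The paper instead distinguishes whether some $\mathcal{S}$-pattern along the half-strip coincides with an $h'$-translated one: if not, a refined count of the pattern sets $A$, $B$, $C$ (using that the patterns and their $h'$-translates are distinct but agree off $\ell_{\scriptscriptstyle\mathcal{S}}$) gives $|A|\leq P_{\eta}(\mathcal{S})-P_{\eta}(\mathcal{S}\backslash\ell_{\scriptscriptstyle\mathcal{S}})\leq\lceil\frac{1}{2}|\ell_{\scriptscriptstyle\mathcal{S}}\cap\mathcal{S}|\rceil-1$ and the Alphabetical Morse--Hedlund Theorem yields the stated $\tau_0$; if yes, the coincidence plus the generating property of $\mathcal{S}$ identifies $\varphi|_{(\ell_{\scriptscriptstyle\mathcal{S}}\cup F^-)(i)}$ with a translate of a region inside $\mathcal{K}$, whence the period $t_0\vec{v}_{\lle}$. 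Supplying this dichotomy (or an equivalent) is the missing substance of part (ii).
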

\begin{proof}
Let $h = \kappa \vec{v}_{\lle}$ and $h' = \kappa'\vec{v}_{\ell'}$, with $\kappa,\kappa' \in \bb{N}$, be periods of $\varphi|\sob{\mathcal{K}}$. Recall that $\ell,\ell' \in$ $\bb{G}_1$ are one-sided nonexpansive directions on $\overline{Orb \, (\varphi)}$ and so one-sided nonexpansive directions on $X_{\eta}$. Since $\mathcal{S}$ is $(\ell,p)$-balanced and $(T^{h}\varphi)|\sob{F^-(a)} = \varphi|\sob{F^-(a)} =$ $(T^{h'}\varphi)|\sob{F^-(a)}$,  Lemma \ref{rem_equival_lema} and maximality of $\mathcal{K}$ provide $\varphi \in \mathcal{M}_{a-}(\ell, \mathcal{S})$. Thanks to condition (i) of Proposition~\ref{pps_cap3_baixa_compl} the restriction of $\varphi$ to the $(\ell,\mathcal{S},p)$-half-strip $F^-(a+p)$ is periodic of period $t_0\vec{v}_{\lle}$ for some
\begin{equation}\label{eq_cap3_periodo_1}
t_0 \leq \Big\lceil\frac{1}{2}\big|\ell^{}_{\scriptscriptstyle\mathcal{S}} \cap \mathcal{S}\big|\Big\rceil-1.
\end{equation}
Moreover, since $\varphi|\sob{\mathcal{K}}$ is $\ell$-periodic and $F^-(a+p) \subset F^-(a) \subset \mathcal{K}$, it is clear that the restriction of $\varphi$ to the $(\ell,\mathcal{S},p)$-half-strip $F^-(a)$ is also periodic of period $t_0\vec{v}_{\lle}$.

To conclude the proof of the first condition it is enough to show that $\varphi|\sob{\mathcal{K}}$ is peri- odic of period $t'_0 \vec{v}_{\ell'}$ for some integer $t'_0 \leq |\ell'_{\scriptscriptstyle\mathcal{S}} \cap \mathcal{S}|-2$. Indeed, in this case, $F^-(a) \cap$ $(F^-(a)+t'_0\vec{v}_{\ell'}) \neq \emptyset$ and, from the Fine-Wilf Theorem \cite{fine}, we obtain that $\varphi|\sob{\mathcal{K}}$ is periodic of period $t_0\vec{v}_{\lle}$.

We prove such a fact by considering two cases separately.

\medbreak{\it Case 1.} Suppose $|\ell'_{\scriptscriptstyle\mathcal{S}} \cap \mathcal{S}| \leq |\lle'_{\scriptscriptstyle\mathcal{S}} \cap \mathcal{S}|$ and define $q' := |\ell'_{\scriptscriptstyle\mathcal{S}} \cap \mathcal{S}|-1$. Let $\mathcal{S}'$ be a translation of $\mathcal{S}$ with $\ell'_{\scriptscriptstyle\mathcal{S}'} = \ell'{}^{(-)}_{\hspace{-0.1cm}\scriptscriptstyle\mathcal{K}}$ and let $a' \in \bb{Z}$ be such that the $(\ell'\!,\mathcal{S}'\!,q')$-half-strip $F^+(a')$ is contained in $\mathcal{K}$. As before, Lemma \ref{rem_equival_lema} and maximality of $\mathcal{K}$ imply that $\varphi \in \mathcal{M}_{a'\!+}(\ell'\!,\mathcal{S}')$. Thus, condition (i) of Proposition~\ref{pps_cap3_baixa_compl} ensures that the restriction of $\varphi$ to $F^+(a'+q')$ and therefore to $F^+(a')$ is periodic of period $t'_0 \vec{v}_{\ell'}$ for some integer
\begin{equation}\label{eq_cap3_periodo_2}
t'_0 \leq \Big\lceil\frac{1}{2}\big|\ell'_{\scriptscriptstyle\mathcal{S}'} \cap \mathcal{S}'\big|\Big\rceil-1 = \Big\lceil\frac{1}{2}\big|\ell'_{\scriptscriptstyle\mathcal{S}} \cap \mathcal{S}\big|\Big\rceil-1 \leq \big|\ell'_{\scriptscriptstyle\mathcal{S}} \cap \mathcal{S}\big|-2.
\end{equation}
Clearly, for each $j \geq 0$, $\varphi|\sob{F^-(a)} = (T^{jh'}\varphi)|\sob{F^-(a)}$ and $\varphi|\sob{F^+(a')} = (T^{jh}\varphi)|\sob{F^+(a')}$. Let $m,m' \in \bb{N}$ be such that $$\mathcal{W} := (F^-(a)+m'h') \cap (F^+(a')+mh)$$ contains a translation of $\mathcal{S} \backslash \{\ell^{}_{\scriptscriptstyle\mathcal{S}},\ell'_{\scriptscriptstyle\mathcal{S}}\}$. Let $\lle'_1 := \lle'{}^{(-)}_{\hspace{-0.1cm}\scriptscriptstyle\mathcal{W}}$ and define $\lle'_{i+1} := \lle'_i{}^{(-)}$ for all $i \geq 1$. For each $i \geq 1$, we write $(\bb{Z}^2 \backslash \mathcal{H}(\lle^{}_{\scriptscriptstyle\mathcal{W}})) \cap \lle'_i =: \{g_i+t\vec{v}_{\ell'} : t \geq 0\}$. Thanks to Lemma~\ref{lemma_hafl-set} there exists an $\eta$-generating set $\mathcal{T} \subset \mathcal{S}$ with $\diam_{\ell}(\mathcal{T}) \leq \lceil \frac{1}{2}\diam_{\ell}(\mathcal{S}) \rceil$. If $\mathcal{U}_i$ is the translation of $\mathcal{T}$ where $g_i$ is the initial point of $\lle'_{\scriptscriptstyle\mathcal{U}_i} \cap \mathcal{U}_i$ (with respect to the orientation of $\lle'$), by (\ref{eq_cap3_periodo_2}) we have $\mathcal{U}_i-t'_0\vec{v}_{\ell'} \subset$ $F^-(a)+m'h'$ (see Figure~\ref{ext_afm_pri}). 
\begin{figure}[ht]
	\centering
	\def\svgwidth{4.4cm}
	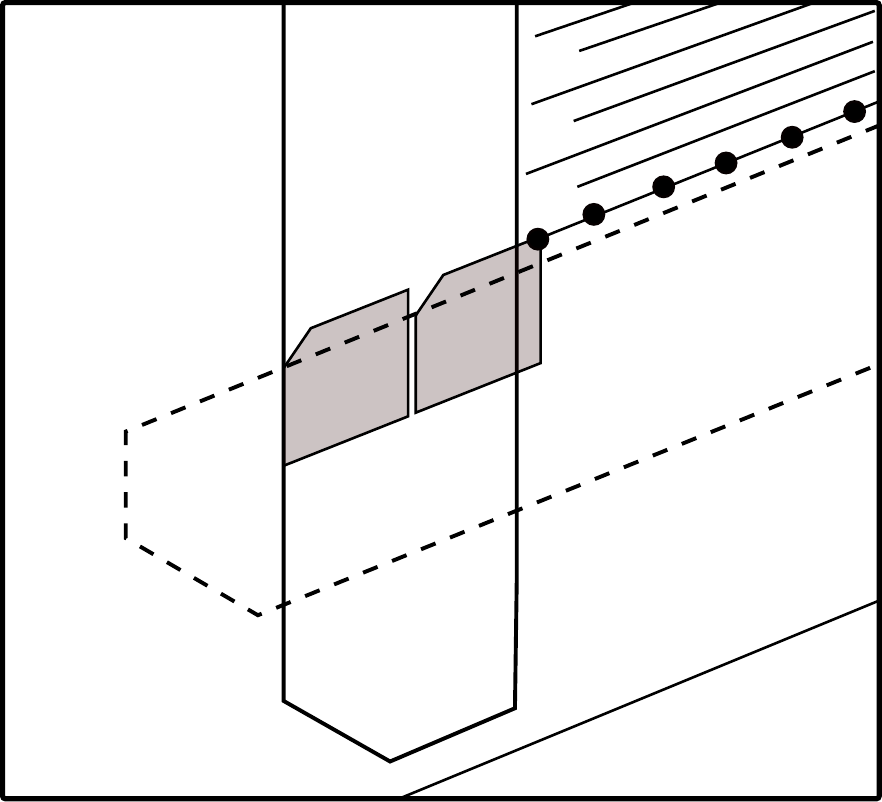
	\caption{The set $\mathcal{U}$. The dashed region represents $F^+(a')+mh$.}
	\label{ext_afm_pri}
\end{figure}
Since $\varphi|\sob{F^-(a)+m'h'}$ is periodic of period $t_0\vec{v}_{\lle}$, from (\ref{eq_cap3_periodo_1}) it follows that the res\-triction of $\varphi$ to the set $$(F^-(a)+m'h') \cup (F^+(a')+mh)$$ is periodic of period $t'_0\vec{v}_{\ell'}$. In particular, $\varphi|\sob{\mathcal{U}_1 \backslash \{g_1\}} = (T^{-t'_0\vec{v}_{\ell'}}\varphi)|\sob{\mathcal{U}_1 \backslash \{g_1\}}$, which yields $\varphi_{g_1} =$ $(T^{-t'_0\vec{v}_{\ell'}}\varphi)_{g_1}$, because $\mathcal{U}_1$ is an $\eta$-generating set. Applying an identical reasoning to the points $g_1+t\vec{v}_{\ell'}$, we obtain by induction that the restriction of $\varphi$ to the set  $$(F^-(a)+m'h') \cup (F^+(a')+mh) \cup \{g_1 +t\vec{v}_{\ell'} : t \geq 0\}$$ is periodic of period $t'_0 \vec{v}_{\ell'}$. By repeating the reasoning for the others lines $\lle'_i$, we conclude by induction that the restriction of $\varphi$ to the set $$(F^-(a)+m'h') \cup (F^+(a')+mh) \cup \left\{g_i +t\vec{v}_{\ell'} : i \geq 1, \ t \geq 0\right\}$$ is periodic of period $t'_0 \vec{v}_{\ell'}$. Since by hypothesis $\varphi|\sob{\mathcal{K}}$ is doubly periodic with periods in $\ell \cap (\bb{Z}^2)^*$ and $\ell' \cap (\bb{Z}^2)^*$, using the Fine-Wilf Theorem it is not difficult to conclude that $\varphi|\sob{\mathcal{K}}$ is also periodic of period $t'_0 \vec{v}_{\ell'}$, which completes this case.

\medbreak{\it Case 2.} Suppose $|\ell'_{\scriptscriptstyle\mathcal{S}} \cap \mathcal{S}| > |\lle'_{\scriptscriptstyle\mathcal{S}} \cap \mathcal{S}|$ and define $q' := |\lle'_{\scriptscriptstyle\mathcal{S}} \cap \mathcal{S}|-1$. Note that, Corollary~\ref{cor_reta_antiparalela} and Lemma \ref{lem_cardinal_mlc} imply $q' \geq 2$. If for some translation $\mathcal{S}'$ of $\mathcal{S}$ there exists $a' \in \bb{Z}$ such that $\varphi \in \mathcal{M}_{a'\!-}(\lle'\!,\mathcal{S}')$ and the $(\lle'\!,\mathcal{S}'\!,q')$-half-strip $F^-(a')$ lies in $\mathcal{K}$, then exactly as argued in Case~1 we have that $\varphi|\sob{\mathcal{K}}$ is periodic of period $t'_0\vec{v}_{\ell'}$ for some $t'_0 \leq \lceil \frac{1}{2}|\lle'_{\scriptscriptstyle\mathcal{S}} \cap \mathcal{S}| \rceil-1 <$ $|\lle'_{\scriptscriptstyle\mathcal{S}} \cap \mathcal{S}|-1 \leq |\ell'_{\scriptscriptstyle\mathcal{S}} \cap \mathcal{S}|-2$, which completes this case. 

Otherwise, $\varphi \not\in \mathcal{M}_{a'\!-}(\lle'\!,\mathcal{S}')$ for all translation $\mathcal{S}'$ of $\mathcal{S}$ and any $a' \in \bb{Z}$ such that the $(\lle'\!,\mathcal{S}'\!,q')$-half-strip $F^-(a')$ is contained in $\mathcal{K}$. By hypothesis, there is an $(\ell'\!,r')$-balanced set $\mathcal{R}' \in \mathcal{F}^{V\!ol}_{C}$, which, without loss of generality, is supposed to satisfy $\ell'_{\scriptscriptstyle\mathcal{R}'} = \ell'{}^{(-)}_{\hspace{-0.1cm}\scriptscriptstyle\mathcal{K}}$. Let $u \in (\bb{Z}^2)^*$ be such that $\ell'+u = \ell'{}^{(-)}$ and consider $\mathcal{R}'_{j} := \mathcal{R}'+(j-1)u$ for all $j \geq 1$. To simplify notions, we write $\ell'_j := \ell'_{\scriptscriptstyle\mathcal{R}'_j}$. Let $b'_1 \in$ $\bb{Z}$ be such that the  $(\ell'\!,\mathcal{R}'_1,r')$-half-strip $F^+_1(b'_1)$  lies in $\mathcal{K}$. Thanks to Lemma~\ref{lem_extens_semifaixa}, the restriction of $\varphi$ to the set $(\ell'_{1} \cup F_1^+)(b'_1)$ is periodic of period $t_1\vec{v}_{\ell'}$ for some $t_1 \leq \max\{\kappa',2\varPhi\}$, where $\varPhi := \varPhi_{r'}(\ell'\!,\mathcal{R}'_j)$ does not depend on $j \in \bb{N}$. Suppose that we have defined a sequence of integers $b'_1, b'_2, \ldots, b'_m$ such that, for each $1 < j \leq m$, $F^+_j(b'_j)$ is an $(\ell'\!,\mathcal{R}'_j,r')$-half-strip that satisfies
\begin{enumerate}
	\item[(i)] $\varphi|\sob{(\ell'_{j} \cup F^+_j)(b'_j)}$ is periodic of period $t_j\vec{v}_{\ell'}$ for some $t_j \leq \max\{\kappa',2\varPhi\}$,
	\item[(ii)] $F^+_j(b'_{j}) \subset (\ell'_{j-1} \cup F^+_{j-1})(b'_{j-1})$.
\end{enumerate}
Let $b'_{m+1} \in$ $\bb{Z}$ be such that the $(\ell'\!,\mathcal{R}'_{m+1},r')$-half-strip $F^+_{m+1}(b'_{m+1})$ is contained in $(\ell'_{m}\! \cup F^+_{m})(b'_m)$. Lemma~\ref{lem_extens_semifaixa} implies that the restriction of $\varphi$ to $(\ell'_{m+1}\! \cup F^+_{m+1})(b'_{m+1})$ is periodic of period $t_{m+1}\vec{v}_{\ell'}$ for some integer $t_{m+1} \leq \max\{t_m,2\varPhi\} \leq \max\{\kappa',2\varPhi\}$. By induction, these integers $b'_j$ are defined for all $j \in \bb{N}$. Hence, the restriction of $\varphi$ to the convex set $\varGamma := \mathcal{K} \cup (\bigcup_{j=1}^{\infty} F^+_j(b'_j))$ is $\ell'$-periodic (but not $\ell$-periodic).

\begin{claim}
There is a translation $\mathcal{S}_1'$ of $\mathcal{S}$  contained in $\varGamma$ such that $\varphi \in \mathcal{M}_{0-}(\lle'\!,\mathcal{S}_1')$.	
\end{claim} 

Indeed, suppose not. Let $\mathcal{S}'$ be a translation of $\mathcal{S}$ such that the $(\lle'\!,\mathcal{S}'\!,q')$-half-strip $F^-(0)$ is contained in $\varGamma \backslash \mathcal{K}$ and consider $B' :=$ $F^-(0) \backslash F^-(2t')$, where  $t'\vec{v}_{\ell'}$ is a period of $\varphi| \sob{\mathcal{K}}$. Since $\mathcal{K}$ has a semi-infinite edge parallel to $\ell'$ and $\varGamma \backslash \mathcal{K} = \bigcup_{j=1}^{\infty} F^+_j(b'_j)$, then there exists an integer $\alpha' \geq 0$ such that both $F^-(\alpha')+i\vec{v}_{\ell}$ and $F^-(\alpha')+j\vec{v}_{\ell}$ are contained in $\varGamma \backslash \mathcal{K}$ for every integers $0 \leq i <$ $j \leq P_{\eta}(B')$. By the Pigeonhole Principle, $(T^{i\vec{v}_{\ell}}\varphi)|\sob{B'+\alpha'\vec{v}_{\ell'}} = (T^{j\vec{v}_{\ell}}\varphi)|\sob{B'+\alpha'\vec{v}_{\ell'}}$ for some integers $0 \leq i < j \leq$ $P_{\eta}(B')$. Hence, since $\varphi|\sob{\varGamma}$ is periodic of period  $t'\vec{v}_{\ell'}$, by the definition of $B'$,
\begin{equation}\label{eq_igualdade_Prop}
(T^{i\vec{v}_{\ell}}\varphi)|\sob{F^-(\alpha')} = (T^{j\vec{v}_{\ell}}\varphi)|\sob{F^-(\alpha')}.
\end{equation}
The contradiction hypothesis allows us  to apply Lemma \ref{rem_equival_lema} successively. So, from (\ref{eq_igualdade_Prop}), we obtain an unbounded region contained in $\varGamma$ that has nonempty intersection with $\varGamma \backslash \mathcal{K}$ and such that the restriction of $\varphi$ to it is $\ell$-periodic (see Figure~\ref{duas_faixa_afir}). 
\begin{figure}[ht]
	\centering
	\def\svgwidth{4.5cm}
	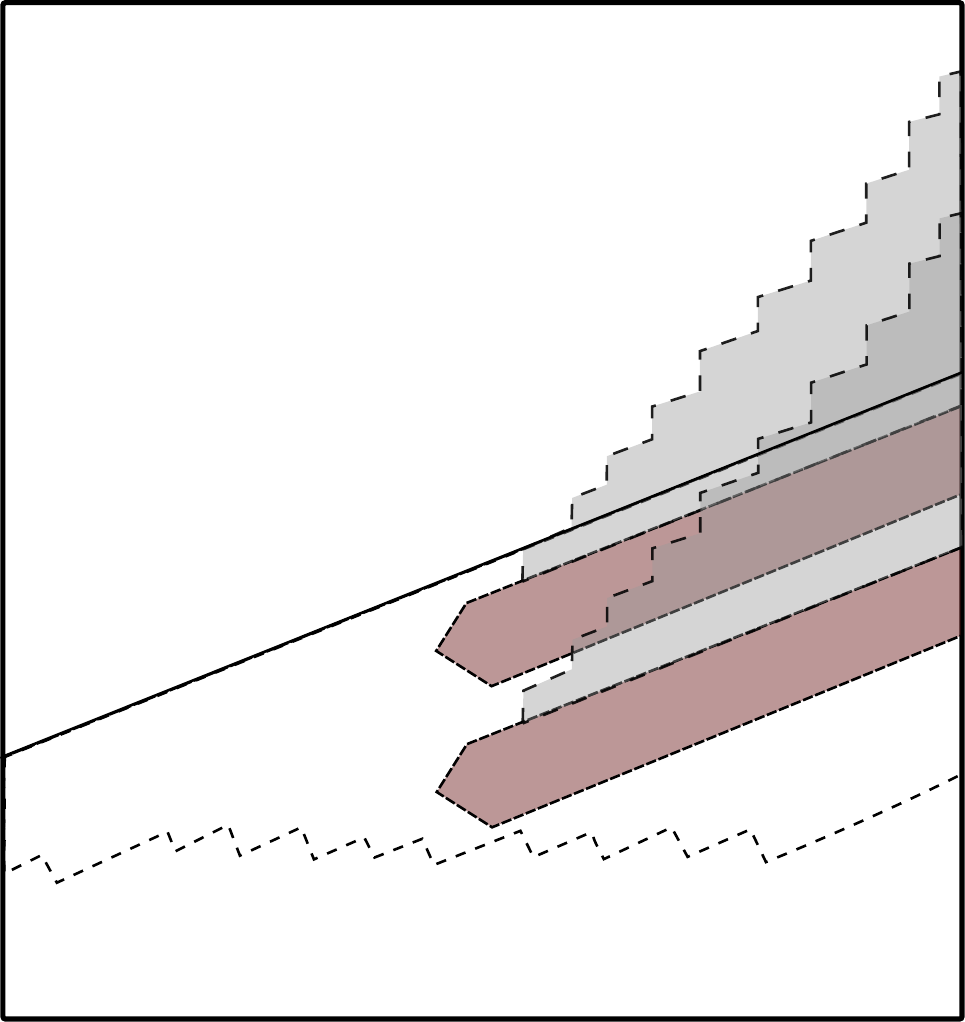
	\caption{The \hspace{3cm} $F^-(\alpha')+i \vec{v}_{\ell}$ and $F^-(\alpha')+j \vec{v}_{\ell}$.}
	\label{duas_faixa_afir}
	\vspace{-13pt}\hspace{-2.5cm}$(\lle'\!,\mathcal{S}''\!,q')$-half-strips
\end{figure}
This allows us to contradict the maximality of $\mathcal{K}$, since the set of doubly periodicity of $\varphi$ would now include the set $\{g+t\vec{v}_{\ell'} : t \geq 0\}$ for some $g \in \ell'{}^{(-)}_{\hspace{-0.1cm}\scriptscriptstyle\mathcal{K}} \cap \bb{Z}^2$, which proves the claim.

So, let $\mathcal{S}'_1$ be a translation of $\mathcal{S}$ contained in $\varGamma$ such that $\varphi \in \mathcal{M}_{0-}(\lle'\!,\mathcal{S}_1')$. Note that the inclusion $\mathcal{S}'_1 \subset \varGamma$ implies that the $(\lle'\!,\mathcal{S}'_1,q')$-half-strip $F^-_1(0)$ is contained in $\varGamma$, but obviously may not be contained in $\mathcal{K}$. Define $\mathcal{S}'_{j+1} := \mathcal{S}_1'-ju$ for all inte- ger $j \geq 1$ (recall that $\ell'+u = \ell'{}^{(-)}$). Denot\-ing $a'_1 := 0$, for $j > 1$, let $a'_j \in \bb{Z}$ be such that the $(\lle'\!,\mathcal{S}'_j,q')$-half-strip $F^-_j(a'_{j})$ lies in $(\lle'_{\scriptscriptstyle\mathcal{S}_{j-1}}\! \cup F^-_{j-1})(a'_{j-1}+q')$. Let $J \geq 1$ be the largest integer such that $\varphi \in \mathcal{M}_{a'_J \!-}(\lle'\!,\mathcal{S}'_J)$. Thanks to  Proposition~\ref{pps_cap3_baixa_compl}, the res- triction of $\varphi$ to $(\lle'_{\scriptscriptstyle\mathcal{S}_{J}}\! \cup F^-_J)(a'_{J}+q')$ is periodic of period $t'_0\vec{v}_{\ell'}$ for some integer  $t'_0 \leq$ $2\lceil \frac{1}{2}|\lle'_{\scriptscriptstyle\mathcal{S}} \cap \mathcal{S}| \rceil-2$. Thus, since $\varphi \not\in \mathcal{M}_{a'_j \!-}(\lle'\!,\mathcal{S}'_j)$ for all integer $j > J$, condition~(ii) of Proposition \ref{pps_cap3_baixa_compl} ensures that the restriction of $\varphi$ to $(\lle'_{\scriptscriptstyle\mathcal{S}_{j}}\! \cup F^-_j)(a'_j)$ is periodic of period $t'_0\vec{v}_{\ell'}$. This means that the restriction of $\varphi$ to $\varLambda := \bigcup_{j=J}^{\infty} F^-_j(a'_j+q')$ is periodic of period $t'_0\vec{v}_{\ell'}$, where $t'_0 \leq 2\lceil \frac{1}{2}|\lle'_{\scriptscriptstyle\mathcal{S}} \cap \mathcal{S}| \rceil-2 \leq$ $|\lle'_{\scriptscriptstyle\mathcal{S}} \cap \mathcal{S}|-1 \leq |\ell'_{\scriptscriptstyle\mathcal{S}} \cap \mathcal{S}|-2$.

Observe that $\varLambda \cap \mathcal{K}$ is an unbounded region that can be described making use of a union of half-lines parallel to $\ell'$. Hence, since $\varphi|\sob{\mathcal{K}}$ is $\ell'$-periodic, it is easy to argue using the Fine-Wilf Theorem that $\varphi|\sob{\mathcal{K}}$ is also periodic of period $t'_0\vec{v}_{\ell'}$. This concludes the proof of condition (i) in the statement of proposition.

We also prove condition (ii) by considering two cases separately. We begin by assuming that
\begin{equation}\label{eq_cap3_difr}
(T^{i\,\vec{v}_{\lle}}\varphi)|\sob{\mathcal{S}} \neq (T^{j\,\vec{v}_{\lle}+h'}\varphi)|\sob{\mathcal{S}} \quad \forall \ i,j \geq a,
\end{equation} 
where $h' = \kappa'\vec{v}_{\ell'}$ is a period of $\varphi|\sob{\mathcal{K}}$. Clearly,
\begin{equation}\label{eq_cap3_igual_}
(T^{t\vec{v}_{\lle}}\varphi)|\sob{\mathcal{S} \backslash \ell_{\scriptscriptstyle\mathcal{S}}} = (T^{t\vec{v}_{\lle}+h'}\varphi)|\sob{\mathcal{S} \backslash \ell_{\scriptscriptstyle\mathcal{S}}} \quad \forall \ t \geq a.
\end{equation} 
For $A := \{(T^{t\vec{v}_{\lle}}\varphi)|\sob{\mathcal{S}} : t \geq a\}$ and $B := \{(T^{t\vec{v}_{\lle}+h'}\varphi)|\sob{\mathcal{S}} : t \geq a\}$,  from (\ref{eq_cap3_difr}) and (\ref{eq_cap3_igual_}) one has $|A|+|B| = |A \cup B| \leq \sum_{\gamma \in C} N_{\mathcal{S}}(\ell,\gamma)$, where $C := \{(T^{t\vec{v}_{\lle}+h'}\varphi)|\sob{\mathcal{S} \backslash \ell_{\scriptscriptstyle\mathcal{S}}} : t \geq a\}$. In particular, as $|C| \leq |B|$, then $$|A| \leq \left(\sum_{\gamma \in C} N_{\mathcal{S}}(\ell,\gamma)\right)-|C| \leq \sum_{\gamma \in C} \Big(N_{\mathcal{S}}(\ell,\gamma)-1\Big) \leq  P_{\eta}(\mathcal{S})-P_{\eta}(\mathcal{S} \backslash \ell_{\scriptscriptstyle\mathcal{S}}).$$ Hence,  (\ref{eq_mlc-generating}) applied to $\mathcal{T} = \mathcal{S} \backslash \ell_{\scriptscriptstyle\mathcal{S}}$ provides $|A| \leq \lceil \frac{1}{2}|\ell_{\scriptscriptstyle\mathcal{S}} \cap \mathcal{S}| \rceil-1$. Let $\xi = (\xi_t)_{t \geq a}$ be the sequence defined by $\xi_t := (T^{t\vec{v}_{\lle}}\varphi)|\sob{\mathcal{F}^{\ell,p}(\mathcal{S}) \cup \{f_{\mathcal{S}}(\ell_{\scriptscriptstyle\mathcal{S}})\}}$ for all integer $t \geq a$ and consider $n := \lceil \frac{1}{2}|\ell_{\scriptscriptstyle\mathcal{S}} \cap \mathcal{S}| \rceil-1$. Since $n < |\ell_{\scriptscriptstyle\mathcal{S}} \cap \mathcal{S}|-1$ and $|\ell_{\scriptscriptstyle\mathcal{S}} \cap \mathcal{S}| \leq |\lle_{\scriptscriptstyle\mathcal{S}} \cap \mathcal{S}|$, from Remark~\ref{rem_cap2_arest_paral} we have that $P_{\xi}(n) \leq |A| \leq n$. As $2 \leq  n-P_{\xi}(1)+2 \leq n$ if $P_{\xi}(1) > 1$,  then, for $k = n-P_{\xi}(1)+2$, we have $P_{\xi}(k) \leq P_{\xi}(n) \leq n=k+P_{\xi}(1)-2$ and therefore, by Alphabetical Morse-Hedlund Theorem, that the sequence $(\xi_t)_{t \geq a+n}$ is periodic of period at most $n$. This means, even if $P_{\xi}(1) = 1$, that the restriction of $\varphi$ to the set $(\ell_{\scriptscriptstyle\mathcal{S}} \cup F^-)(a+n)$ is periodic of period $\tau_0\vec{v}_{\lle}$ for some $\tau_0 \leq \lceil \frac{1}{2}|\ell_{\scriptscriptstyle\mathcal{S}} \cap \mathcal{S}| \rceil-1$. Being $\varphi|\sob{(\ell_{\scriptscriptstyle\mathcal{S}} \cup F^-)(a)}$ $\ell$-periodic (see Lemma \ref{lem_extens_semifaixa}), the inclusion $$(\ell_{\scriptscriptstyle\mathcal{S}} \cup F^-)(a+n) \subset (\ell_{\scriptscriptstyle\mathcal{S}} \cup F^-)(a)$$ ensures that $\varphi|\sob{(\ell_{\scriptscriptstyle\mathcal{S}} \cup F^-)(a)}$ is also periodic of period $\tau_0\vec{v}_{\lle}$, which concludes this case. 

In the second case, we assume that there exist integers $i,j \geq a$ such that \begin{equation}\label{eq_cap3_main_prop}
\varphi|\sob{\mathcal{S}+i\vec{v}_{\lle}} = (T^{i\,\vec{v}_{\lle}}\varphi)|\sob{\mathcal{S}} = (T^{j\,\vec{v}_{\lle}+h'}\varphi)|\sob{\mathcal{S}} = (T^{h'}\varphi)|\sob{\mathcal{S}+j\vec{v}_{\lle}}.
\end{equation}
As guaranteed by condition (i), $\varphi|\sob{F^-(i)}$ and $(T^{h'}\,\varphi)|\sob{F^-(j)}$ are periodic of period $t_0\vec{v}_{\lle}$ for some $t_0 \leq \lceil \frac{1}{2}|\ell_{\scriptscriptstyle\mathcal{S}} \cap \mathcal{S}| \rceil-1$. Hence,  (\ref{eq_cap3_main_prop}) and Remark \ref{rem_cap2_arest_paral} allow us to get $$\varphi|\sob{F^-(i)\cup(\mathcal{S}+i\vec{v}_{\lle})} = (T^{h'}\!\varphi)|\sob{F^-(j)\cup(\mathcal{S}+j\vec{v}_{\lle})}.$$ Being $\mathcal{S}$ an $\eta$-generating set, by induction, it follows that the restrictions of $\varphi$ to the sets $(\ell_{\scriptscriptstyle\mathcal{S}} \cup F^-)(i)$ and $(\ell_{\scriptscriptstyle\mathcal{S}} \cup F^-)(j)+h' \subset$
$\mathcal{K}$ coincide. Thus, since the restriction of $\varphi$ to $(\ell_{\scriptscriptstyle\mathcal{S}} \cup F^-)(j)+h'$ is periodic of period $t_0\vec{v}_{\lle}$, then $\varphi|\sob{(\ell_{\scriptscriptstyle\mathcal{S}} \cup F^-)(i)}$ and therefore $\varphi|\sob{(\ell_{\scriptscriptstyle\mathcal{S}} \cup F^-)(a)}$ is periodic of period $t_0\vec{v}_{\lle}$.
\end{proof}

\section{Proof of the main result}
\label{sec7}

Supposing, by contradiction, that the thesis of Theorem \ref{main_result} does not hold, following ideas highlighted by Cyr and Kra, we will reach an absurd from the existence of an $(\ell,\ell')$-periodic maximal $\mathcal{K}$-configuration. The main idea is to argue that the number of configurations arising in the boundary transition from doubly periodicity to aperiodicity would be greater than possible.

\medbreak\noindent{\it Proof of Theorem \ref{main_result}}\hspace{1ex}\ignorespaces Suppose, by contradiction, that $\eta$ is aperiodic. Let $\mathcal{S} \in \mathcal{F}^{V\!ol}_{C}$ be an mlc $\eta$-generating set. Corollary~\ref{cor_2_periodic} ensures that there is at least one nonexpansive line on $X_{\eta}$, which is denoted by $\ell \in \bb{G}_1$. As observed in Remark~\ref{obs_exist_arestas_mcbc}, the antiparallel lines $\ell$ and $\lle$ are both one-sided nonexpansive directions on $X_{\eta}$. This allows us to assume, without loss of generality, $|\ell^{}_{\scriptscriptstyle\mathcal{S}} \cap \mathcal{S}| \leq |\lle^{}_{\scriptscriptstyle\mathcal{S}} \cap \mathcal{S}|$. Hence, according to Proposition \ref{pps_exist_maximal_conf}, there exists an $(\ell,\ell')$-periodic maximal $\mathcal{K}$-configuration $\varphi \in X_{\eta}$. Recall that $\ell' \in$ $\bb{G}_1$ is a one-sided nonexpansive directions on $\overline{Orb \, (\varphi)}$ and so  also on $X_{\eta}$ which satisfies $\vec{v}_{\ell'} \in \mathcal{H}(\ell)$. We denote by $\psi \in A^{\bb{Z}^2}$ the unique doubly periodic configuration such that $\varphi|\sob{\mathcal{K}} = \psi|\sob{\mathcal{K}}$. Translating $\mathcal{S}$, we can assume that the $(\ell,\mathcal{S},p)$-half-strip $F^-(0)$ lies in $\mathcal{K}$ and that $\ell^{}_{\scriptscriptstyle\mathcal{S}} = \ell_{\scriptscriptstyle\mathcal{K}}^{(-)}$, where $p := |\ell_{\scriptscriptstyle\mathcal{S}} \cap \mathcal{S}|-1$. As Proposition~\ref{pps_lp-balanceado} guarantees the existence of a balanced set with respect to every gi\-ven one-sided nonexpansive direction, by Proposition \ref{afm_principal}, $\varphi|\sob{\mathcal{K}}$ and so $\psi$ are doubly periodic configurations of periods $h := t_0\vec{v}_{\lle}$ and $h' := t'_0\vec{v}_{\ell'}$, where
\begin{equation}\label{eq_cap3_lim1}
t_0 \leq \Big\lceil \frac{1}{2}|\ell_{\scriptscriptstyle\mathcal{S}} \cap \mathcal{S}| \Big\rceil-1 \leq |\ell_{\scriptscriptstyle\mathcal{S}} \cap \mathcal{S}|-2, \quad t'_0 \leq |\ell'_{\scriptscriptstyle\mathcal{S}} \cap \mathcal{S}|-2.
\end{equation}

For any rational oriented line $l \in \bb{G}_1$, let $l_0 := l_{\scriptscriptstyle\mathcal{S}}$ and denote $l_{i+1} := l^{(+)}_i$ for all $i \geq 1$. For suitable integers $d \geq 1$ and $q \geq 0$, we define $$\mathcal{S}_{l}(d,q) := \bigcup_{i=0}^{d-1} \left\{f_{\mathcal{S}}(l_i)-t\vec{v}_{l} : 0 \leq t \leq q\right\},$$ where $f_{\mathcal{S}}(l_i) \in \bb{Z}^2$ is the final point of $l_i \cap \mathcal{S}$ (with respect to the orientation of $l_i$). In order to avoid a heavy notation, we consider $\mathcal{S}(d) :=$ $\mathcal{S}_{\lle\,}(d,p-2)$ for each integer $1 \leq d \leq \diam_{\ell}(\mathcal{S})$ and $\mathcal{S}^* :=$ $\mathcal{S} \backslash \mathcal{S}_{\ell}(\diam_{\ell}(\mathcal{S}),0)$ (See Figure \ref{defdem}). Recall that, thanks to Lemma \ref{lem_cardinal_mlc}, $p \geq 2$. Since $\mathcal{S}^*$ is a proper convex subset of $\mathcal{S}$ and $\mathcal{S}$ is an mlc $\eta$-ge\-nerating set, by (\ref{eq_mlc-generating}) one has
\begin{equation}\label{eq_cap3_des_contra}
P_{\eta}(\mathcal{S})-P_{\eta}(\mathcal{S}^*) \leq \Big\lceil\frac{1}{2}|\mathcal{S} \backslash \mathcal{S}^*| \Big\rceil -1 < |\mathcal{S} \backslash \mathcal{S}^*|-1.
\end{equation}

\begin{figure}[ht]
	\centering
	\def\svgwidth{9.4cm}
	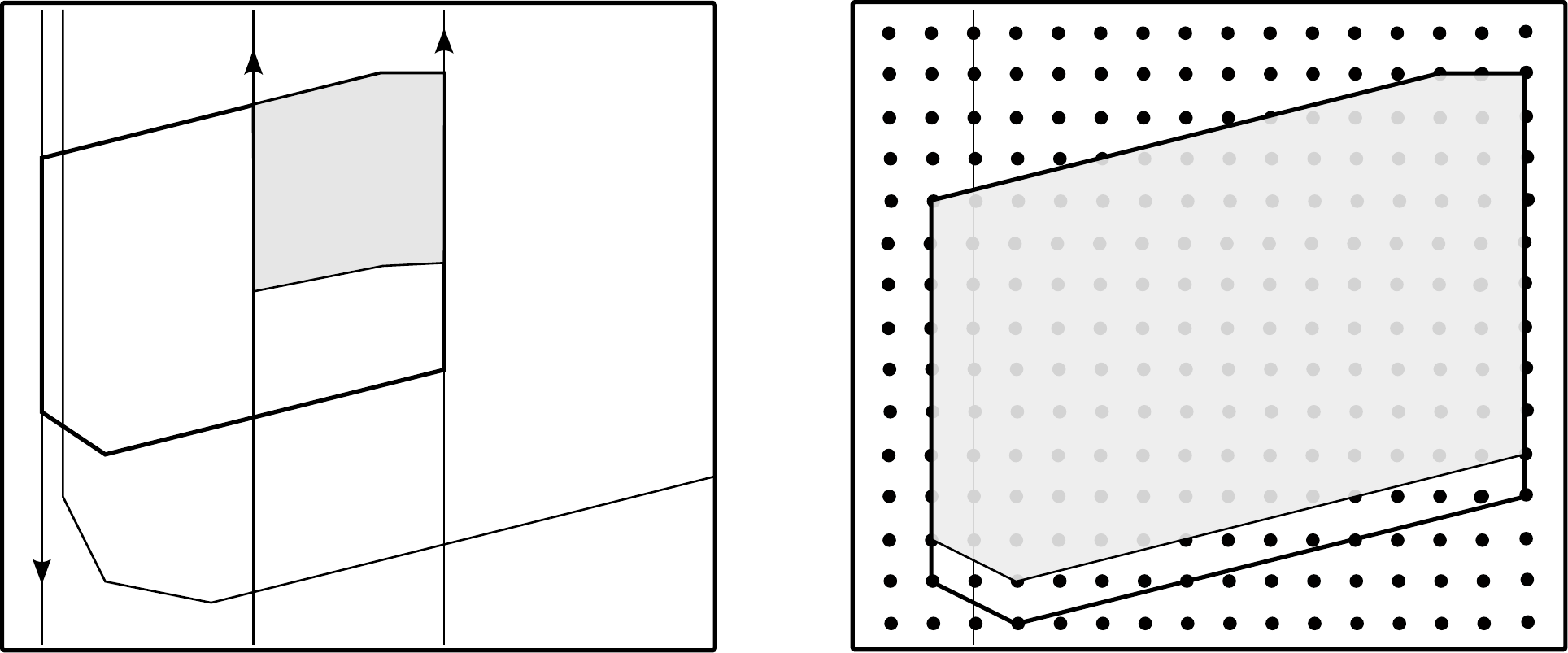
	\caption{Representation of the sets $\mathcal{S}(d)$ and $\mathcal{S}^*$.}
	\label{defdem}
\end{figure}

Let $\wp_0 \subset \bb{R}^2$ be the oriented line parallel to $\ell$ such that  $\wp_0 \cap (\mathcal{S} \backslash \mathcal{S}^*)$ $= i_{\mathcal{S}}(\ell'_{\scriptscriptstyle\mathcal{S}})$ and denote $\wp_{i+1} = \wp^{(+)}_i$ for all $i \geq 0$. If $m' \geq 1$ is such that $\wp_{m'} \cap (\mathcal{S} \backslash \mathcal{S}^*) = f_{\mathcal{S}}(\ell'_{\scriptscriptstyle\mathcal{S}})$, define $z_i := \wp_i \cap (\mathcal{S} \backslash \mathcal{S}^*)$ for each $0 \leq i \leq m'$. By (\ref{eq_cap3_lim1}) there exist integers $0 \leq i < j \leq m'$ such that $t'_0\vec{v}_{\ell'} = z_j-z_i$. Thus, we define $$d' := \min\left\{j-i : 0 \leq i < j \leq m', \ \psi \ \textrm{is periodic of period} \ z_{j}-z_i\right\}.$$ Identifying the vector $z_j-z_i$ to an oriented line segment $S_{ij} \subset \bb{R}^2$, note that $j-i+1$ represents the number of distinct oriented lines $\ell'' \subset \bb{R}^2$ parallel to $\ell$ such that $\ell'' \cap \bb{Z}^2 \neq \emptyset$ and $\ell'' \cap S_{ij} \neq \emptyset$. Then, writing $\diam_{\ell}(\mathcal{S}) = (|\ell'_{\scriptscriptstyle\mathcal{S}} \cap \mathcal{S}|-1)\mu+1+r$, where $\mu := j-i$ for any $0 \leq i < j \leq m'$ such that $\vec{v}_{\ell'} = z_j-z_i$ and $r \geq 0$, by (\ref{eq_cap3_lim1}) one has $$\diam_{\ell}(\mathcal{S}) > (|\ell'_{\scriptscriptstyle\mathcal{S}} \cap \mathcal{S}|-2)\mu+1+r \geq t'_0\mu+1,$$ which yields $d' \leq  t'_0\mu < \diam_{\ell}(\mathcal{S})-1$. In particular, $\ell_{\scriptscriptstyle\mathcal{S}} \cap \mathcal{S}(d') = \emptyset$. Defining $\bar{d} :=$ $|\mathcal{S} \backslash \mathcal{S}^*|-d' = \diam_{\ell}(\mathcal{S})-d' > 1$, it is clear that $(\mathcal{S}(d')+lu)+i\vec{v}_{\lle} \subset \mathcal{K}$ for every $i \geq 0$ and $0 \leq l \leq \bar{d}-1$.

Let $u \in \bb{Z}^2$ be such that $\ell+u = \ell^{(-)}$. For each integer $l \geq 0$, we define $\mathcal{S}_l :=$ $\mathcal{S}+lu$ and $\mathcal{S}^*_l := \mathcal{S}^*+lu$. Since $\varphi \in X_{\eta}$ is aperiodic and $\ell \in \bb{G}_1$ is a one-sided  nonexpansive direction on $\overline{Orb \, (\varphi)}$, from Proposition \ref{pps_lp-balanceado}, Corollary~\ref{cor_reta_antiparalela} and Proposition \ref{pps_period_global}, we conclude that the restriction of $\varphi$ to every $(\ell,\mathcal{S}_l,p)$-strip is not $\ell$-periodic. For each integer $0 \leq l \leq \bar{d}-1$, let $a_l \in \bb{Z}$ be the smallest integer for which the restriction of $\varphi$ to the set $(\ell_{\scriptscriptstyle\mathcal{S}_l} \cup F^-)(a_l)$ is $\ell$-periodic, where $F^-(a_l)$ denotes as usual the corresponding $(\ell,\mathcal{S}_l,p)$-half-strip. We remark that the existence of $a_l$ follows from Lemma~\ref{lem_extens_semifaixa} applied successively in order to get a larger region that contains $\mathcal{K}$ and such that the restriction of $\varphi$ is $\ell$-periodic (but not doubly periodic).

In the next two claims, we will count the number of configurations that arise in the boundary transition from doubly periodicity to aperiodicity. The strategy is to show that there are so many such configurations that (\ref{eq_cap3_des_contra}) may be contradicted.

\begin{claim}\label{claim_lad}
The following conditions hold.
\begin{enumerate}
	\item[(i)] $\big|\{\varphi|\sob{\mathcal{S}_l^*+(a_{l}-1)\vec{v}_{\lle}} : 0 \leq l \leq \bar{d}-1\}\big| = \bar{d}$.
	\item[(ii)] For each integer $0 \leq l \leq \bar{d}-1$, there exist at least two distinct $\mathcal{S}_l$-configura- tions $\gamma',\gamma'' \in L(\mathcal{S}_l,\eta)$ such that $\gamma'|\sob{\mathcal{S}^*_l} = (T^{(a_{l}-1)\vec{v}_{\lle}}\varphi)|\sob{\mathcal{S}_l^*} = \gamma''|\sob{\mathcal{S}^*_l}$.
\end{enumerate}
\end{claim}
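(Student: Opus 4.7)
My plan is to establish (ii) directly and then derive (i) by contradiction, leaning on (ii) and the minimality of the $a_l$'s. For (ii), fix $0\le l\le\bar d-1$ and let $\tau\vec{v}_{\lle}$, $\tau\ge 1$, be the period of $\varphi|\sob{(\ell_{\mathcal{S}_l}\cup F^-)(a_l)}$ provided by the definition of $a_l$. Take
$$\gamma':=(T^{(a_l-1)\vec{v}_{\lle}}\varphi)|\sob{\mathcal{S}_l}, \quad \gamma'':=(T^{(a_l-1+\tau)\vec{v}_{\lle}}\varphi)|\sob{\mathcal{S}_l},$$
both in $L(\mathcal{S}_l,\eta)$. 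The crucial geometric observation is that any point of $\mathcal{S}_l^*+(a_l-1)\vec{v}_{\lle}$ that lies on a parallel line $\ell'$ has the form $f_{\mathcal{S}_l}(\ell')+(a_l-1+s)\vec{v}_{\lle}$ with $s\ge 1$, since $\mathcal{S}_l^*$ is obtained from $\mathcal{S}_l$ precisely by removing each rightmost point $f_{\mathcal{S}_l}(\ell')$; hence $\mathcal{S}_l^*+(a_l-1)\vec{v}_{\lle}\subset(\ell_{\mathcal{S}_l}\cup F^-)(a_l)$, and the established $\tau\vec{v}_{\lle}$-periodicity forces $\gamma'|\sob{\mathcal{S}_l^*}=\gamma''|\sob{\mathcal{S}_l^*}$. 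Were $\gamma'=\gamma''$, the agreement would extend to every rightmost point of $\mathcal{S}_l+(a_l-1)\vec{v}_{\lle}$, thereby extending $\ell$-periodicity to $(\ell_{\mathcal{S}_l}\cup F^-)(a_l-1)$, contradicting the minimality of $a_l$. Thus $\gamma'\ne\gamma''$, which proves (ii).

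For (i), suppose for contradiction that $\varphi|\sob{\mathcal{S}_{l_1}^*+(a_{l_1}-1)\vec{v}_{\lle}}=\varphi|\sob{\mathcal{S}_{l_2}^*+(a_{l_2}-1)\vec{v}_{\lle}}$ for some $0\le l_1<l_2\le\bar d-1$. With $w:=(a_{l_2}-a_{l_1})\vec{v}_{\lle}+(l_2-l_1)u$, the hypothesis reads $(T^w\varphi)|\sob{\mathcal{S}_{l_1}^*+(a_{l_1}-1)\vec{v}_{\lle}}=\varphi|\sob{\mathcal{S}_{l_1}^*+(a_{l_1}-1)\vec{v}_{\lle}}$. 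The plan is to combine this agreement with (ii) applied at level $l_2$ --- which produces a second $\mathcal{S}_{l_2}$-extension of the common $\mathcal{S}^*$-configuration and therefore, after shift by $-w$, an extra candidate $\mathcal{S}_{l_1}$-configuration at level $l_1$ --- and with the $\ell$-periodicity of $\varphi$ on $(\ell_{\mathcal{S}_{l_j}}\cup F^-)(a_{l_j})$, in order to iterate the $\eta$-generating property of the vertices of $\mathcal{S}_{l_1}$. Successful iteration will propagate the agreement to $\mathcal{S}_{l_1}+(a_{l_1}-1)\vec{v}_{\lle}$, and then Proposition~\ref{pps_cap3_baixa_compl}(ii) upgrades this to $\ell$-periodicity on $(\ell_{\mathcal{S}_{l_1}}\cup F^-)(a_{l_1}-1)$, contradicting the minimality of $a_{l_1}$.

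The main obstacle is (i): one must carefully verify that the shift by $w$ lands the relevant half-strip at level $l_1$ inside a region where the $\ell$-periodicity of $\varphi$ has already been established --- either within the doubly periodic $\mathcal{K}$, within the $\ell$-periodic half-strip at level $l_2$, or through the intermediate structure provided by $\psi$ --- so that the generating iterations at the vertices of $\mathcal{S}_{l_1}$ do not stall against uncontrolled boundary behaviour. Once (i) and (ii) are in hand, they produce $\bar d$ distinct $\mathcal{S}^*$-configurations, each admitting at least two distinct $\mathcal{S}$-extensions, whence $P_\eta(\mathcal{S})-P_\eta(\mathcal{S}^*)\ge\bar d$, in direct conflict with the bound (\ref{eq_cap3_des_contra}).
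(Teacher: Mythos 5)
Your argument for condition (ii) is correct and is exactly the paper's (one-line) argument: the containment $\mathcal{S}_l^*+(a_l-1)\vec{v}_{\lle} \subset (\ell_{\scriptscriptstyle\mathcal{S}_l}\cup F^-)(a_l)$ makes the two shifted patterns $(T^{(a_l-1)\vec{v}_{\lle}}\varphi)|\sob{\mathcal{S}_l}$ and $(T^{(a_l-1+\tau)\vec{v}_{\lle}}\varphi)|\sob{\mathcal{S}_l}$ agree on $\mathcal{S}^*_l$, and their equality on all of $\mathcal{S}_l$ would push the period $\tau\vec{v}_{\lle}$ down to $(\ell_{\scriptscriptstyle\mathcal{S}_l}\cup F^-)(a_l-1)$, contradicting the minimality of $a_l$.

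For condition (i), however, what you give is a plan, not a proof, and you yourself flag the decisive step (``one must carefully verify that the shift by $w$ lands the relevant half-strip \dots'') without resolving it; that is a genuine gap. Moreover, the plan as stated is unlikely to close. The shift $w=(a_{l_2}-a_{l_1})\vec{v}_{\lle}+(l_2-l_1)u$ has a component transverse to $\ell$, so even if you managed to propagate the agreement from $\mathcal{S}^*$ to all of $\mathcal{S}_{l_1}+(a_{l_1}-1)\vec{v}_{\lle}$, this only matches the two strips at levels $l_1$ and $l_2$ against each other; it does not produce a period of $\varphi$ along $\ell$ at level $a_{l_1}-1$, and Proposition \ref{pps_cap3_baixa_compl}(ii) cannot be invoked there because it requires periodicity on the half-strip $F^-(a_{l_1}-1)$, which is precisely what is unknown. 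The paper's actual proof works quite differently: it truncates $\mathcal{S}^*$ to $\hat{\mathcal{S}}^*=\mathcal{S}^*\backslash\mathcal{S}_{\ell}(l',\hat q)$ so that suitable translates fit inside the common region $(\ell_{\scriptscriptstyle\mathcal{S}}\cup F^-)(0)$, pushes the two copies deep into their respective $\ell$-periodic half-strips by multiples of the periods $t_{l}\vec{v}_{\lle}$, $t_{l'}\vec{v}_{\lle}$, and then applies the Fine--Wilf Theorem to the two boundary sequences $\xi,\xi'$, using the quantitative bound $n,n'\leq \lceil\frac12|\ell_{\scriptscriptstyle\mathcal{S}}\cap\mathcal{S}|\rceil-1$ from Proposition \ref{afm_principal}(ii) (so $n+n'-\gcd(n,n')\leq p-2$ while the sequences agree on $p-1$ consecutive indices) to conclude $\varphi|\sob{\hat F^*+J}=\varphi|\sob{\hat F^*+J'}$; finally it combines this with the period $v=z_j-z_i$ of $\varphi|\sob{\mathcal{K}}$ to exhibit a line of points outside $\mathcal{K}$ on which $\varphi$ is doubly periodic, contradicting the \emph{maximality of $\mathcal{K}$}, not the minimality of $a_{l_1}$. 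Your sketch uses neither the period bounds of Proposition \ref{afm_principal} nor the maximality of $\mathcal{K}$, and these are exactly the ingredients needed to make (i) go through.
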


Indeed, with respect to condition (i), suppose by contradiction, that there exist integers $0 \leq l < l' \leq \bar{d}-1$ such that
\begin{equation*}
\varphi|\sob{\mathcal{S}_l^*+(a_l-1)\vec{v}_{\lle}} = \varphi|\sob{\mathcal{S}_{l'}^*+(a_{l'}-1)\vec{v}_{\lle}}.
\end{equation*}
Let $\hat{\mathcal{S}}^* := \mathcal{S}^* \backslash \mathcal{S}_{\ell}(l',\hat{q})$, where $\hat{q} := \max\{|\ell_i \cap \mathcal{S}| : 0 \leq i < \diam_{\ell}(\mathcal{S})\}$. Note that $\hat{\mathcal{S}}^*+l'u \subset \mathcal{H}(\ell_{\scriptscriptstyle\mathcal{S}})$ and $(\hat{\mathcal{S}}^*+l 'u) \cap \ell_{\scriptscriptstyle\mathcal{S}} \neq \emptyset$. If $t_{l}\vec{v}_{\lle}$ and $t_{l'}\vec{v}_{\lle}$ are, respectively, periods of $\varphi|\sob{(\ell_{\scriptscriptstyle\mathcal{S}_l}\! \cup F^-)(a_l)}$ and $\varphi|\sob{(\ell_{\scriptscriptstyle\mathcal{S}_{l'}}\! \cup F^-)(a_{l'})}$, choose an integer $m \geq 1$ such that, for $J := lu+(a_l-1+mt_{l})\vec{v}_{\lle}$ and $J' := l'u+(a_{l'}-1+mt_{l'})\vec{v}_{\lle}$, both $\hat{\mathcal{S}}^*+J$ and $\hat{\mathcal{S}}^*+J'$ are contained in $(\ell_{\scriptscriptstyle\mathcal{S}} \cup F^-)(0)$. In particular,
\begin{equation}\label{eq_cap3_contraexm1}
\varphi|\sob{\hat{\mathcal{S}}^*+J} = \varphi|\sob{\hat{\mathcal{S}}^*+lu+(a_l-1)\vec{v}_{\lle}} = \varphi|\sob{\hat{\mathcal{S}}^*+l'u+(a_{l'}-1)\vec{v}_{\lle}} = \varphi|\sob{\hat{\mathcal{S}}^*+J'}.
\end{equation}
Since any line parallel to $\ell$ that has nonempty intersection with $\mathcal{S}$ contains at least $p$ points (see Remark \ref{rem_cap2_arest_paral}), then every line parallel to $\ell$ that has nonempty intersection with $\hat{\mathcal{S}}^*$ contains at least $p-1 \geq 1$ points. For $$\mathcal{F}_I := \Big(\mathcal{F}^{\ell,p-1}(\hat{\mathcal{S}}^*) \cup \{f_{\hat{\mathcal{S}}^*}(\ell_{\scriptscriptstyle\hat{\mathcal{S}}^*})\}\Big)+I, \quad \textrm{with} \ I \in \{J,J'\},$$ define the sequences $\xi = (\xi_t)_{t \geq 0}$ and $\xi' = (\xi'_t)_{t \geq 0}$ by $\xi_t :=  (T^{t\vec{v}_{\lle}}\varphi)|\sob{\mathcal{F}_J}$ and $\xi'_t := (T^{t\vec{v}_{\lle}}\varphi)|\sob{\mathcal{F}_{J'}}$ for all $t \geq 0$. It follows from condition (ii) of Proposition \ref{afm_principal} that $\xi$ and $\xi'$ are periodic of periods $n,n' \leq \lceil \frac{1}{2}|\ell_{\scriptscriptstyle\mathcal{S}} \cap \mathcal{S}| \rceil-1$. Since $\xi$ and $\xi'$ coincide in at least $p-1$ consecutive indexes (see (\ref{eq_cap3_contraexm1})) and $n+n'-\gcd(n,n') \leq |\ell_{\scriptscriptstyle\mathcal{S}} \cap \mathcal{S}|-2 = p-2$, Fine-Wilf Theorem guarantees that $\xi = \xi'$. In particular, for $\hat{F}^* := \bigcup_{t \geq 0} (\hat{\mathcal{S}}^*+t\vec{v}_{\lle})$, we have $\varphi|\sob{\hat{F}^*+J} = \varphi|\sob{\hat{F}^*+J'}$. Assuming $d' = j-i$, recall that $\varphi|\sob{\mathcal{K}}$ is periodic of period $v := z_j-z_i$. Then, if $g' = g+J' \in (\hat{F}^*+J') \cap \ell_{\scriptscriptstyle\mathcal{S}}$, as $g'+\vec{v}_{\lle} \in (\hat{F}^*+J') \cap \ell_{\scriptscriptstyle\mathcal{S}}$ and $\varphi|\sob{\hat{F}^*+J} = \varphi|\sob{\hat{F}^*+J'}$, we obtain that $$\varphi_{g'+\vec{v}_{\lle}} = \varphi_{g+\vec{v}_{\lle}+J'} = \varphi_{g+\vec{v}_{\lle}+J} = \varphi_{g+\vec{v}_{\lle}+J+v}.$$ As $g \in \hat{F}^* \cap \ell_{l'}$ and $\diam_{\ell}(\hat{\mathcal{S}}^*) = \diam_{\ell}(\mathcal{S})-l' \geq d'+1$, we have $g+\vec{v}_{\lle}+v \in$ $\hat{F}^*$ (see Figure \ref{afirm}) and therefore that $g+\vec{v}_{\lle}+v+J \in (\hat{F}^*+J)$. Therefore, $$\varphi_{g+\vec{v}_{\lle}+v+J} = \varphi_{g+\vec{v}_{\lle}+v+J'} = \varphi_{g'+\vec{v}_{\lle}+v}.$$ We have then $\varphi_{g'+\vec{v}_{\lle}} = \varphi_{g'+\vec{v}_{\lle}+v}$. Thus, the restriction of $\varphi$ to the set $\{g'+\vec{v}_{\lle} : g' \in$ $(\hat{F}^*+J') \cap \ell_{\scriptscriptstyle\mathcal{S}}\} \cup \mathcal{K}$ is doubly periodic of periods $v =$ $z_j-z_i$ and $\kappa \vec{v}_{\lle}$ for some $\kappa \in \bb{N}$. So there exists a convex set that strictly contains $\mathcal{K}$ and such that the restriction of $\varphi$ to it is doubly periodic with periods contained in $\ell \cap \bb{Z}^2$ and $\ell' \cap \bb{Z}^2$. But this contradicts the maximality of $\mathcal{K}$.
\begin{figure}[ht]
	\centering
	\def\svgwidth{4.7cm}
	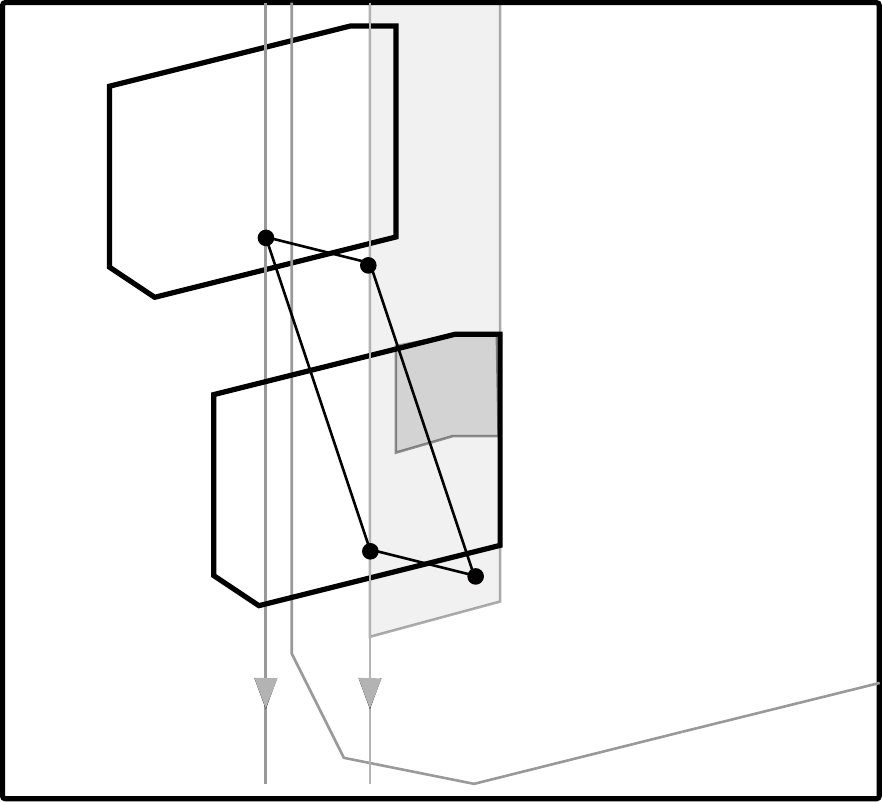
	\caption{The equality $\varphi_A = \varphi_B = \varphi_C = \varphi_D$, where $A = $ $g-\vec{v}_{\ell}+J', B = g-\vec{v}_{\ell}+J, C = B+v$ and $D = A+v$.}
	\label{afirm}
\end{figure}

With respect to condition (ii), since $\mathcal{S}_l^*+(a_l-1)\vec{v}_{\lle} \subset (\ell_{\scriptscriptstyle\mathcal{S}_l}\! \cup F^-)(a_l)$, from the minimality of $a_{l}$ follows the statement, which proves the claim.\medbreak

For each integer $0 \leq l \leq d'-1$, let $u_l$ denote the vector $z_{m'}-z_{m'-d'+l}$. Let $\mathcal{T}$ be a translation of $\mathcal{S}$ such that $\ell'_{\scriptscriptstyle\mathcal{T}} = \ell'{}_{\scriptscriptstyle\hspace{-0.08cm}\mathcal{K}}^{(-)}$ and $\mathcal{T} \backslash \ell'_{\scriptscriptstyle\mathcal{T}} \subset \mathcal{K}$, and let $\mathcal{T}^*$ be the corresponding translation of $\mathcal{S}^*$. Since $z_{i}$ belongs to $\mathcal{S} \backslash \mathcal{S}^*$ and does not belong to $\ell'_{\scriptscriptstyle\mathcal{S}^*}$ for every $0 \leq i \leq m'$, it is not difficult to check that $\mathcal{T}^*+u_l \subset \mathcal{K}$ for every integer $0 \leq$ $l \leq d'-1$ (even when $z_{m'-d'+l} \in \ell'{}_{\scriptscriptstyle\hspace{-0.1cm}\mathcal{S}^*}^{(-)}$).

\begin{claim}\label{claim_emb}
The following conditions hold.
\begin{enumerate}
	\item[(iii)] $\big|\{\varphi|\sob{\mathcal{T}^*+u_l} : 0 \leq l \leq d'-1\}\big| = d'$.
	\item[(iv)] For each integer $0 \leq l \leq d'-1$, there exist at least two distinct $\mathcal{T}$-configura- tions $\gamma',\gamma'' \in L(\mathcal{T},\eta)$ such that $\gamma'|\sob{\mathcal{T}^*} = (T^{u_l}\varphi)|\sob{\mathcal{T}^*} = \gamma''|\sob{\mathcal{T}^*}$.
\end{enumerate}
\end{claim}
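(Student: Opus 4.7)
The plan is to mirror the proof of Claim \ref{claim_lad}, replacing the role of the minimality of $a_l$ along the $\ell$-direction by that of the minimality of $d'$ along the $\ell'$-direction, and making crucial use of the fact that $\varphi|\sob{\mathcal{K}} = \psi|\sob{\mathcal{K}}$ with $\psi$ doubly periodic of periods $h = t_0\vec{v}_{\lle}$ and $h' = t'_0\vec{v}_{\ell'}$.

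For condition (iii), I argue by contradiction: suppose $\varphi|\sob{\mathcal{T}^*+u_l} = \varphi|\sob{\mathcal{T}^*+u_{l'}}$ for some $0 \le l < l' \le d'-1$. Since both $\mathcal{T}^*+u_l$ and $\mathcal{T}^*+u_{l'}$ lie in $\mathcal{K}$, this equality transfers to $\psi$, yielding $\psi|\sob{\mathcal{T}^*+u_l} = (T^{w}\psi)|\sob{\mathcal{T}^*+u_l}$ with $w := u_{l'} - u_l = z_{m'-d'+l} - z_{m'-d'+l'}$. Setting $i := m'-d'+l$ and $j := m'-d'+l'$, one has $0 \le i < j \le m'-1$ and $j-i = l'-l < d'$. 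The bounds $t_0 \le |\ell_{\scriptscriptstyle\mathcal{S}} \cap \mathcal{S}|-2$ and $t'_0 \le |\ell'_{\scriptscriptstyle\mathcal{S}} \cap \mathcal{S}|-2$ from \eqref{eq_cap3_lim1}, combined with the geometry of $\mathcal{S}$ (whose edges parallel to $\ell$ and $\ell'$ contain respectively at least $|\ell_{\scriptscriptstyle\mathcal{S}}\cap\mathcal{S}|$ and $|\ell'_{\scriptscriptstyle\mathcal{S}}\cap\mathcal{S}|$ integer points), ensure that $\mathcal{T}^*$ contains a fundamental domain of the period lattice $\langle h, h'\rangle$ of $\psi$. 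Therefore, the local identity $\psi = T^{w}\psi$ on $\mathcal{T}^*+u_l$ upgrades to the global equality $\psi = T^{w}\psi$, so that $\psi$ is periodic of period $-w = z_j - z_i$ with $j-i < d'$, contradicting the minimality of $d'$.

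For condition (iv), take $\gamma' := (T^{u_l}\varphi)|\sob{\mathcal{T}}$ as a first extension. Since $\psi$ is doubly periodic on $\bb{Z}^2$ and $\mathcal{T}^* + u_l \subset \mathcal{K}$, the $\mathcal{T}^*$-configuration $(T^{u_l}\varphi)|\sob{\mathcal{T}^*} = \psi|\sob{\mathcal{T}^*+u_l}$ appears in $\varphi$ at every translate $\mathcal{T}^*+u_l+\kappa h + \kappa' h'$ still contained in $\mathcal{K}$. For a suitable choice of $(\kappa, \kappa')$ for which $(\mathcal{T}\setminus\mathcal{T}^*)+u_l+\kappa h + \kappa' h'$ sticks out of $\mathcal{K}$, the $\mathcal{T}$-configuration $\gamma'' := (T^{u_l+\kappa h+\kappa' h'}\varphi)|\sob{\mathcal{T}}$ belongs to $L(\mathcal{T},\eta)$, agrees with $\gamma'$ on $\mathcal{T}^*$, and must differ from $\gamma'$ somewhere on $\mathcal{T} \setminus \mathcal{T}^*$ -- otherwise $\varphi$ would coincide with $\psi$ on a convex set strictly containing $\mathcal{K}$ that still admits both periods $h$ and $h'$, contradicting the maximality of $\mathcal{K}$ prescribed by Definition \ref{ll'maximalperconf}.

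The main obstacle lies in the geometric verification supporting (iii): the bounds on $t_0$ and $t'_0$ from Proposition \ref{afm_principal} must be combined with a careful count of points of $\mathcal{T}^*$ to exhibit a fundamental domain of $\langle h, h'\rangle$; without this, the local agreement cannot be promoted to the global periodicity needed to contradict the definition of $d'$. Part (iv) additionally requires pinpointing a shift $(\kappa,\kappa')$ that moves the edge $\mathcal{T} \setminus \mathcal{T}^*$ out of $\mathcal{K}$ while keeping $\mathcal{T}^*$ inside, the availability of which ultimately follows from the maximality of $\mathcal{K}$.
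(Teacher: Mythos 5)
Your part (iii) is essentially the paper's argument: transfer the coincidence on $\mathcal{T}^*+u_l$ to $\psi$ and use the double periodicity of $\psi$ to upgrade a local agreement to a global period $z_j-z_i$ with $j-i=l'-l<d'$, contradicting the minimality of $d'$. The verification you flag as ``the main obstacle'' is in fact available from data already in the proof of Theorem \ref{main_result}: $\mathcal{T}^*$ consists of $\diam_{\ell}(\mathcal{S})>t'_0\mu+1$ consecutive lines parallel to $\ell$, each containing at least $p-1\geq t_0$ consecutive lattice points (each line meets $\mathcal{S}$ in at least $p$ points by Remark \ref{rem_cap2_arest_paral}, one point per line is removed, and $t_0\leq\lceil\frac{1}{2}(p+1)\rceil-1\leq p-1$ by (\ref{eq_cap3_lim1})), while $h'=t'_0\vec{v}_{\ell'}$ advances exactly $t'_0\mu$ lines; hence $\mathcal{T}^*$ meets every residue class of the lattice $\langle h,h'\rangle$, which is your ``fundamental domain''. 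The paper performs the same upgrade in two steps (first along $\ell$ to the full strip $\tilde{F}=\bigcup_{i\in\mathbb{Z}}(\mathcal{T}+i\vec{v}_{\lle})$, then along $\ell'$), so once this count is written out, (iii) is fine.

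Part (iv), however, has a genuine gap. Your distinctness argument says: if $\gamma'=\gamma''$ then ``$\varphi$ would coincide with $\psi$ on a convex set strictly containing $\mathcal{K}$ that still admits both periods'', contradicting maximality. But $\gamma'=\gamma''$ is a single coincidence of two finite patches; by itself it yields neither an unbounded region nor doubly periodicity, whereas the maximality in Definition \ref{ll'maximalperconf} can only be violated by a convex set with semi-infinite edges parallel to $\ell$ and $\ell'$, strictly containing $\mathcal{K}$, on which $\varphi$ is periodic in \emph{both} directions. The paper gets this by assuming the negation of (iv), i.e.\ uniqueness of the extension of $(T^{u_l}\varphi)|_{\mathcal{T}^*}$ in $L(\mathcal{T},\eta)$, and using that hypothesis twice -- once against the occurrence at $u_l+t'_0\vec{v}_{\ell'}$ and once against the occurrence at $u_l+t_0\vec{v}_{\lle}$ -- to obtain two equalities on all of $\mathcal{T}$, and then propagating each one along the half-line $\{g_0+t\vec{v}_{\ell'}:t\geq 0\}$ lying on the lattice line just outside the $\ell'$-edge of $\mathcal{K}$, by induction with the $\eta$-generating translates $\hat{\mathcal{T}}$ and the point count (\ref{eq_theor_almost_prin}) from Remark \ref{rem_cap2_arest_paral}. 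Only after both periods are established on $\mathcal{K}$ together with that half-line does maximality give a contradiction. Your single pair $(\gamma',\gamma'')$ supplies neither the propagation mechanism nor the second period, so the ``otherwise'' clause does not follow; moreover the existence of a shift $(\kappa,\kappa')$ keeping $\mathcal{T}^*+u_l$ inside $\mathcal{K}$ while pushing $(\mathcal{T}\setminus\mathcal{T}^*)+u_l$ out is asserted rather than proved (note that already for $(\kappa,\kappa')=(0,0)$ part of $\mathcal{T}+u_l$ lies on the line outside $\mathcal{K}$, by (\ref{eq_theor_almost_prin})). The natural repair is to run (iv) by contradiction from uniqueness, as the paper does, rather than trying to exhibit the two extensions directly.
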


Indeed, with respect to condition (iii), suppose, by contradiction, that there are integers $0 \leq l < l' \leq d'-1$ such that
\begin{equation}\label{eq_cap3_pnul}
\varphi|\sob{\mathcal{T}^*+u_l} = \varphi|\sob{\mathcal{T}^*+u_{l'}}.
\end{equation} 
As remarked above, both $\mathcal{T}^*+u_l$ and $\mathcal{T}^*+u_{l'}$ are contained in $\mathcal{K}$. Since the restric\-tions of $\varphi$ and $\psi$ to the set $\mathcal{K}$ coincide, by (\ref{eq_cap3_pnul}) and from the fact of $\psi$ being periodic of period $t_0\vec{v}_{\lle}$ (see (\ref{eq_cap3_lim1})), it follows that
\begin{equation}\label{eq_cap3_afirma311}
\psi|\sob{\tilde{F}+u_l} = \psi|\sob{\tilde{F}+u_{l'}},
\end{equation}
where $\tilde{F} := \bigcup_{i \in \bb{Z}} (\mathcal{T}+i\vec{v}_{\lle})$. Thus, as $\psi$ is periodic of period $t'_0\vec{v}_{\ell'}$, from (\ref{eq_cap3_lim1}) and (\ref{eq_cap3_afirma311}), it is easy to argue that $\psi$ is also periodic of period $u_{l}-u_{l'} = z_{m'-d'+l'}-z_{m'-d'+l}$. But $m'-d'+l'-(m'-d'+l) = l'-l \leq d'-1$, which contradicts the minimality of $d'$.

With respect to condition (iv), suppose by contradiction that, for some $\mathcal{T}^*$-con\-figuration $\gamma = (T^{u_l}\varphi)|\sob{\mathcal{T}^*}$, with $0 \leq l \leq d'-1$, there exists a unique $\mathcal{T}$-configuration $\gamma' \in L(\mathcal{T},\eta)$ such that $\gamma'|\sob{\mathcal{T}^*} = \gamma$. We first show that the restriction of $\varphi$ to the set $\mathcal{K} \cup \{g_0+j\vec{v}_{\ell'} : j \geq 0\}$, for some $g_0 \in \ell'{}_{\scriptscriptstyle\hspace{-0.08cm}\mathcal{K}}^{(-)} \cap$ $(\mathcal{T}+u_l)$, is periodic of period $t'_0\vec{v}_{\ell'}$. Consider the final point $g_0 := f_{\mathcal{T}+u_l}(\ell'{}_{\scriptscriptstyle\hspace{-0.08cm}\mathcal{K}}^{(-)})$ and, for all $j \geq 0$, define $g_j := g_0+j\vec{v}_{\ell'}$. Let $\hat{\mathcal{T}}$ be the translation of $\mathcal{T}+u_l$ such that $g_1$ is the final point of $\ell'{}_{\scriptscriptstyle\hspace{-0.08cm}\hat{\mathcal{T}}}^{(-)} \cap \hat{\mathcal{T}}$. Note that the parallelogram of vertices $i_{\mathcal{T}}(\ell'_{\scriptscriptstyle\mathcal{T}}),$ $i_{\mathcal{T}}(\ell'_{\scriptscriptstyle\mathcal{T}})+\vec{v}_{\lle}, f_{\mathcal{T}}(\ell'_{\scriptscriptstyle\mathcal{T}}), f_{\mathcal{T}}(\ell'_{\scriptscriptstyle\mathcal{T}})+\vec{v}_{\lle}$ lies in $\mathcal{T}$ and, therefore, by Remark \ref{rem_cap2_arest_paral} 
\begin{equation}\label{eq_theor_almost_prin}
|\ell'{}_{\scriptscriptstyle\hspace{-0.08cm}\mathcal{K}}^{(-)} \cap (\mathcal{T}+u_l)| \geq |\ell'_{\scriptscriptstyle\mathcal{T}} \cap \mathcal{T}|-1 = |\ell'_{\scriptscriptstyle\hat{\mathcal{T}}}\! \cap \hat{\mathcal{T}}|-1.
\end{equation}
Since $\gamma = \varphi|\sob{\mathcal{T}^*+u_l} = \varphi|\sob{\mathcal{T}^*+u_l+t'_0\vec{v}_{\ell'}}$, then $\gamma'= \varphi|\sob{\mathcal{T}+u_l} = \varphi|\sob{\mathcal{T}+u_l+t'_0\vec{v}_{\ell'}}$. By (\ref{eq_theor_almost_prin}), one has $$\ell'{}_{\scriptscriptstyle\hspace{-0.08cm}\mathcal{K}}^{(-)} \cap \hat{\mathcal{T}} \backslash \{g_1\} \subset \ell'{}_{\scriptscriptstyle\hspace{-0.08cm}\mathcal{K}}^{(-)} \cap (\mathcal{T}+u_l)$$ and $$\ell'{}_{\scriptscriptstyle\hspace{-0.08cm}\mathcal{K}}^{(-)} \cap (\hat{\mathcal{T}}+t'_0\vec{v}_{\ell'}) \backslash \{g_{t'_0+1}\} \subset \ell'{}_{\scriptscriptstyle\hspace{-0.08cm}\mathcal{K}}^{(-)} \cap (\mathcal{T}+u_l+t_0\vec{v}_{\ell'}).$$ Therefore, $\varphi|\sob{\hat{\mathcal{T}} \backslash \{g_1\}} = \varphi|\sob{(\hat{\mathcal{T}}+t'_0\vec{v}_{\ell'}) \backslash \{g_{t'_0+1}\}}$. As $\hat{\mathcal{T}}$ is $\eta$-generating, we obtain $g_1 =$ $g_{t'_0+1} =$ $g_1+t'_0\vec{v}_{\ell'}$. By induction, it follows that $\varphi$ restrict to $\mathcal{K} \cup \{g_0+t\vec{v}_{\ell'} : t \geq 0\}$ is periodic of period $t'_0\vec{v}_{\ell'}$. 

Since $\gamma = (T^{u_l+t_0\vec{v}_{\lle}}\varphi)|\sob{\mathcal{T}^*}$, reasoning in a similar way as above, we conclude that the restriction of $\varphi$ to $\mathcal{K} \cup \{g_0+t\vec{v}_{\ell'} : t \geq 0\}$ is also periodic of period $t_0\vec{v}_{\lle}$. This contradicts the maximality of $\mathcal{K}$, which proves the claim.\medbreak

To reach a contradiction and thus to conclude the proof, it is enough to show that the $\mathcal{S}^*$-configurations of condition (i) are different from  $\mathcal{S}^*$-configurations of con\-dition (iii). As a matter of fact, if this is the case, there exist at least $\bar{d}+d' = |\mathcal{S} \backslash \mathcal{S}^*|$ distinct $\mathcal{S}^*$-configurations $\gamma \in L(\mathcal{S}^*\!,\eta)$ such that $|\{\gamma' \in L(\mathcal{S},\eta) : \gamma'|\sob{\mathcal{S}^*} = \gamma\}| > 1$, which means that $$P_{\eta}(\mathcal{S}) - P_{\eta}(\mathcal{S}^*) \geq \bar{d}+d' = |\mathcal{S} \backslash \mathcal{S}^*|,$$ contradicting (\ref{eq_cap3_des_contra}). Note then that, since every configuration of condition (iii) be\-longs to $L(\mathcal{S}^*\!,\psi)$, it is enough to show that, for each integer $0 \leq l \leq \bar{d}-1$, $$(T^{lu+i\,\vec{v}_{\lle}}\varphi)|\sob{\mathcal{S}^*} \not \in L(\mathcal{S}^*\!,\psi), \quad \forall \ i \geq 0,$$ where $\ell+u = \ell^{\,(-1)}$. Suppose, by contradiction, that there exist $0 \leq l \leq \bar{d}-1$ and $i \geq 0$ such that
\begin{equation}\label{eq_cap3_afirma_2f}
(T^{lu+i\,\vec{v}_{\lle}}\varphi)|\sob{\mathcal{S}^*} \in L(\mathcal{S}^*\!,\psi).
\end{equation}
Since $\psi$ is doubly periodic of periods $t_0\vec{v}_{\lle}$ and $z_j-z_i$, with $0 \leq i < j \leq m'$ such that $d' = j-i$, the very definition of $\mathcal{S}(d') \subset \mathcal{S}^*$ ensures
\begin{equation}\label{eq_cap3_ext_unica}
\forall \ \gamma',\gamma'' \in L(\mathcal{S}^*\!,\psi), \quad \gamma'|\sob{\mathcal{S}(d')} = \gamma''|\sob{\mathcal{S}(d')} \ \ \textrm{implies} \ \ \gamma' = \gamma''. 
\end{equation}
As $(\mathcal{S}(d')+lu)+i\vec{v}_{\lle} \subset \mathcal{K}$ and $\varphi|\sob{\mathcal{K}} = \psi|\sob{\mathcal{K}}$, then $(T^{lu+i\,\vec{v}_{\lle}}\varphi)|\sob{\mathcal{S}(d')} \in$ $L(\mathcal{S}(d'),\psi)$. Since $h' = t'_0\vec{v}_{\ell'}$ is a period of $\varphi$, $(T^{lu+i\,\vec{v}_{\lle}}\varphi)|\sob{\mathcal{S}(d')} =  (T^{lu+i\,\vec{v}_{\lle}+mh'}\varphi)|\sob{\mathcal{S}(d')}$ for all $m \geq 1$. Note that, for $m$ sufficiently large, we have $(\mathcal{S}^*+lu)+i\vec{v}_{\lle}+mh' \subset \mathcal{K}$ and therefore $(T^{lu+i\,\vec{v}_{\lle}+mh'}\varphi)|\sob{\mathcal{S}^*} \in L(\mathcal{S}^*\!,\psi)$. Thus, by (\ref{eq_cap3_afirma_2f}) and (\ref{eq_cap3_ext_unica}) it follows that $$(T^{lu+i\,\vec{v}_{\lle}}\varphi)|\sob{\mathcal{S}^*} = (T^{lu+i\,\vec{v}_{\lle}+mh'}\varphi)|\sob{\mathcal{S}^*}.$$ For the half line $A := \ell_{\scriptscriptstyle\mathcal{S}} \cap (\ell_{\scriptscriptstyle\mathcal{S}} \cup F^-)(0)$, the above equality implies, in particular, that the restrictions given by $\varphi|\sob{A}$ and $(T^{mh'}\varphi)|\sob{A}$ coincide in at least $|\ell_{\scriptscriptstyle\mathcal{S}} \cap \mathcal{S}|-2$ consecutive indexes. Since $\varphi|\sob{A}$ and $(T^{mh'}\varphi)|\sob{A}$ are, respectively, periodic of periods $k\vec{v}_{\lle}$ and $t_0\vec{v}_{\lle}$, with $k,t_0 \leq \lceil \frac{1}{2}|\ell_{\scriptscriptstyle\mathcal{S}} \cap \mathcal{S}| \rceil-1$ (condition (ii) of Proposition~\ref{afm_principal} and (\ref{eq_cap3_lim1})), then, as $k+t_0-\gcd(k,t_0) \leq |\ell_{\scriptscriptstyle\mathcal{S}} \cap \mathcal{S}|-2$, from Fine-Wilf Theorem, we obtain that $\varphi|\sob{A} =$ $(T^{mh'}\varphi)|\sob{A}$, which contradicts the maximality of $\mathcal{K}$ and concludes the proof of the theorem.\medbreak

\acknowledgment{C. F. Colle would like to thank the Math-Am-Sud Project DCS 17-Math-01 for its support and the  LAMFA for its hospitality during the pre- paration of this manuscript.}

\end{document}